\newcommand{\auth}[2]{{#2. #1}}
\newcommand{\art}[6]{{\sc #1, \rm #2, \it #3\/ \bf #4 \rm (#5), \mbox{#6}.}}
\newcommand{\arttoappear}[3]{{\sc #1, \rm #2, to appear in \it #3}}
\newcommand{\book}[3]{{\sc #1, \it #2, \rm #3.}}
\newcommand{\AND}{{\rm and }}
\newcommand{\reals}{\mathbf{R}}
\newcommand{\complex}{\mathbf{C}}
\newcommand{\norm}[1]{\left\|#1\right\|}
\newcommand{\paraa}[1]{\big(#1\big)}
\newcommand{\parac}[1]{\bigg(#1\bigg)}
\newcommand{\coker}{\operatorname{coker}}
\newcommand{\im}{\operatorname{im}}
\newcommand{\grad}{\nabla}
\newcommand{\dvg}{\mathop{{\rm div}}}
\newcommand{\bdy}{\partial}
\newcommand{\spacearound}[1]{\quad#1\quad}
\newcommand{\equivalent}{\spacearound{\Longleftrightarrow}}
\newtheorem{theorem}{Theorem}[section]
\newtheorem{corollary}[theorem]{Corollary}
\newtheorem{lemma}[theorem]{Lemma}
\newtheorem{proposition}[theorem]{Proposition}
\theoremstyle{definition}
\newtheorem{definition}[theorem]{Definition}
\newtheorem{example}[theorem]{Example}
\theoremstyle{remark}
\newtheorem{rem}[theorem]{Remark}
\numberwithin{equation}{section}
\newcommand{\tr}{\operatorname{tr}}
\newcommand{\A}{\mathcal{A}}
\newcommand{\B}{\mathcal{B}}
\newcommand{\K}{\mathcal{K}}
\newcommand{\Kt}{\widetilde{\K}}
\newcommand{\Kpsif}{\K_{\psi,f}}
\newcommand{\normp}[1]{\norm{#1}_{p}}
\newcommand{\normr}[1]{\norm{#1}_{r}}
\newcommand{\U}{\mathcal{U}}
\newcommand{\Ut}{\widetilde{\mathcal{U}}}
\newcommand{\normF}[1]{\norm{#1}_{\Frob}}
\DeclareMathOperator{\Frob}{Frob}
\newcommand{\normH}[1]{\norm{#1}_H}
\newcommand{\normV}[1]{\norm{#1}_V}
\newcommand{\normW}[1]{\norm{#1}_W}
\newcommand{\normY}[1]{\norm{#1}_Y}
\newcommand{\normSobU}[1]{\norm{#1}_{\SobU}}
\newcommand{\eps}{\varepsilon}
\newcommand{\Bplus}{B_{+}}
\newcommand{\Vplus}{V_{+}}
\newcommand{\Vt}{\widetilde{V}}
\newcommand{\Wt}{\widetilde{W}}
\newcommand{\ut}{\tilde{u}}
\newcommand{\gt}{\tilde{g}}
\newcommand{\htd}{\tilde{h}}
\newcommand{\st}{\tilde{s}}
\newcommand{\uh}{\hat{u}}
\newcommand{\MM}{\mathcal{M}}
\newcommand{\Om}{\Omega}
\newcommand{\Ftilde}{\widetilde{F}}
\renewcommand{\phi}{\varphi}
\renewcommand{\emptyset}{\varnothing}
{\catcode`p =12 \catcode`t =12 \gdef\eeaa#1pt{#1}}      
\def\accentadjtext#1{\setbox0\hbox{$#1$}\kern   
                \expandafter\eeaa\the\fontdimen1\textfont1 \ht0 }
\def\accentadjscript#1{\setbox0\hbox{$#1$}\kern 
                \expandafter\eeaa\the\fontdimen1\scriptfont1 \ht0 }
\def\accentadjscriptscript#1{\setbox0\hbox{$#1$}\kern   
                \expandafter\eeaa\the\fontdimen1\scriptscriptfont1 \ht0 }
\def\accentadjtextback#1{\setbox0\hbox{$#1$}\kern       
                -\expandafter\eeaa\the\fontdimen1\textfont1 \ht0 }
\def\accentadjscriptback#1{\setbox0\hbox{$#1$}\kern     
                -\expandafter\eeaa\the\fontdimen1\scriptfont1 \ht0 }
\def\accentadjscriptscriptback#1{\setbox0\hbox{$#1$}\kern 
                -\expandafter\eeaa\the\fontdimen1\scriptscriptfont1 \ht0 }
\def\itoverline#1{{\mathsurround0pt\mathchoice
        {\rlap{$\accentadjtext{\displaystyle #1}
                \accentadjtext{\vrule height1.593pt}
                \overline{\phantom{\displaystyle #1}
                \accentadjtextback{\displaystyle #1}}$}{#1}}
        {\rlap{$\accentadjtext{\textstyle #1}
                \accentadjtext{\vrule height1.593pt}
                \overline{\phantom{\textstyle #1}
                \accentadjtextback{\textstyle #1}}$}{#1}}
        {\rlap{$\accentadjscript{\scriptstyle #1}
                \accentadjscript{\vrule height1.593pt}
                \overline{\phantom{\scriptstyle #1}
                \accentadjscriptback{\scriptstyle #1}}$}{#1}}
        {\rlap{$\accentadjscriptscript{\scriptscriptstyle #1}
                \accentadjscriptscript{\vrule height1.593pt}
                \overline{\phantom{\scriptscriptstyle #1}
                \accentadjscriptscriptback{\scriptscriptstyle #1}}$}{#1}}}}
\def\vint{\mathop{\mathchoice%
          {\setbox0\hbox{$\displaystyle\intop$}\kern 0.22\wd0%
           \vcenter{\hrule width 0.6\wd0}\kern -0.82\wd0}%
          {\setbox0\hbox{$\textstyle\intop$}\kern 0.2\wd0%
           \vcenter{\hrule width 0.6\wd0}\kern -0.8\wd0}%
          {\setbox0\hbox{$\scriptstyle\intop$}\kern 0.2\wd0%
           \vcenter{\hrule width 0.6\wd0}\kern -0.8\wd0}%
          {\setbox0\hbox{$\scriptscriptstyle\intop$}\kern 0.2\wd0%
           \vcenter{\hrule width 0.6\wd0}\kern -0.8\wd0}}%
          \mathopen{}\int}
\newcommand{\ga}{\gamma}
\newcommand{\p}{{$p\mspace{1mu}$}}
\newcommand{\Np}{N^{1,p}}
\newcommand{\hNp}{\widehat{N}^{1,p}}
\DeclareMathOperator{\Sob}{Sob}
\newcommand{\SobU}{{\Sob(\U)}}
\newcommand{\setm}{\setminus}
\newcommand{\C}{\mathbf{C}}
\newcommand{\R}{\mathbf{R}}
\newcommand{\Q}{\mathbf{Q}}
\newcommand{\al}{\alpha}
\newcommand{\alp}{\alpha}
\newcommand{\imp}{\ensuremath{\Rightarrow} }
\DeclareMathOperator{\varRe}{Re}
\renewcommand{\Re}{\varRe}
\DeclareMathOperator{\varIm}{Im}
\renewcommand{\Im}{\varIm}
\DeclareMathOperator{\Ray}{Ray}
\newenvironment{ack}{\medskip{\it Acknowledgement.}}{}
\title[An axiomatic approach to gradients]
{An axiomatic approach to gradients with\\ 
applications to  Dirichlet and  obstacle\\ problems
 beyond function spaces}
\author{Joakim Arnlind, Anders Bj\"orn and Jana Bj\"orn}
\address{Department of Mathematics\\
Link\"oping University\\
SE-581 83 Link\"oping\\
Sweden}
\email{joakim.arnlind@liu.se, anders.bjorn@liu.se and jana.bjorn@liu.se}
\thanks{}
\subjclass[2010]{Primary 49J27; Secondary 31E05, 35J50, 35J66, 49Q20,
  46L51, 46L52} 
\keywords{Dirichlet problem, first eigenvalue,
generalized Sobolev space, gradient   relation,
  lattice, 
  metric space,     noncommutative function,
obstacle problem, operator-valued function, 
 partial order, Poincar\'e set, 
Rayleigh quotient,
Rellich--Kondrachov cone,
trace class ideal, 
variational problem}
\begin{document}

\begin{abstract}
  We develop a framework for studying variational
  problems in Banach spaces with respect to gradient relations, which
  encompasses many of the notions of generalized gradients that appear
  in the literature. We stress the fact that our approach is not
  dependent on function spaces and therefore applies equally well to
  functions on metric spaces as to operator algebras. In particular,
  we consider analogues of Dirichlet and obstacle problems, as well as
  first eigenvalue problems, and formulate conditions for the
  existence of solutions and their uniqueness. Moreover, we
  investigate to what extent a lattice structure may be introduced on
  (ordered) Banach spaces via a norm-minimizing variational problem. A
  multitude of examples is provided to illustrate the versatility of
  our approach.
\end{abstract}

\maketitle

\section{Introduction}

\noindent In the classical theory of partial differential equations,
one explores the existence of solutions (and their regularity) by
extending spaces of differentiable functions to include functions with
only a weak notion of derivative. Introducing $L^p$-spaces and Sobolev
spaces has the advantage that one may 
exploit the
completeness of these spaces in order to find weak solutions of
differential equations.  In doing so, one is forced to work with
equivalence classes of functions, rather than single functions, and
the classical value of a function at a point is, for some purposes,
simply not relevant anymore.  Consequently, one tends to use Banach
space techniques to reach the desired results. In particular, when
extending the theory to functions on more general spaces, it becomes
apparent that abstract methods are useful as classical techniques may
not be applicable.

Consider the Dirichlet problem for harmonic functions, i.e.\
to find a harmonic function with given boundary values in a 
bounded domain $\Om$
in $\R^n$. This problem can equivalently be reformulated as
finding the minimizer of the energy integral
\begin{align}
  \label{eq:dirichlet.energy}
  \norm{\nabla u}^2_{L^2(\Om)}= \int_\Om |\nabla u|^2\,dx,  
\end{align}
over all sufficiently smooth functions with given boundary values. In
this note, we aim to give an axiomatic approach to such problems
starting from a quite general notion of gradient, assuming only a weak
form of linearity. Many particular examples of gradients, such as weak
gradients, upper gradients in metric spaces, Haj\l asz gradients and
algebraic derivations, fall into this class.  We shall also consider
gradients with no relation to derivatives (cf.\
Section~\ref{sect-ex-2}), as well as examples which come from 
higher-order differential operators, such as the Laplacian and
biLaplacian (cf.\
Section~\ref{sec:higher.order.operators}). It deserves to be pointed
out that the framework we develop depends neither on function spaces
nor on the commutativity of multiplication and, therefore, applies
equally well to noncommutative settings, such as operator algebras.

We start by introducing an abstract
notion of gradient relation and define a Sobolev space
based on it.  We show that, under minimal assumptions, this
generalized Sobolev space is always a Banach space and that functions
therein possess a unique minimal gradient.  In
Theorem~\ref{thm:dirichlet.problem} we formulate sufficient
conditions for the existence of solutions to the Dirichlet problem
with respect to this minimal gradient in analogy with
\eqref{eq:dirichlet.energy}. Furthermore, in
Proposition~\ref{prop:u.gu.linear.unique.solution} we give a condition
for the solution to be unique.

In addition to the Dirichlet problem, we also consider the obstacle
problem as well as the first eigenvalue problem (strictly speaking the
existence of minimizers for the Rayleigh quotient,
cf.\ Theorem~\ref{thm-solve-rayleigh}).  To solve the obstacle
problem we reformulate it as a Dirichlet problem, and we can thus use
the Dirichlet problem theory to solve the obstacle problem.  Already
here one can see the power of our abstract approach, as one can rarely
consider obstacle problems as special cases of
Dirichlet problems in more traditional
situations (cf.\ Remark~\ref{rem:poincare.sets}).  A prominent
role in the minimization problems above is played by
\emph{Poincar\'e sets}, i.e.\ subsets $\K$ of the abstract Sobolev
space which support 
a generalized Poincar\'e inequality:
\begin{align*}
  \norm{u}\leq C\norm{\nabla u}, \quad u\in\K.
\end{align*}
Such sets provide natural domains when considering variational
problems in the context of gradient relations.

Finally, inspired by the fact that the pointwise maximum of two
functions minimizes the $L^p$-norm among all
functions which majorize both 
functions,
we investigate the possibility of defining the maximum (as well as the
minimum) of two elements in a Banach space via a minimization problem
(cf.\ Propositions~\ref{prop:existence.maximum}
and~\ref{prop:min.via.minimization}). Furthermore, we formulate
necessary and sufficient conditions for the existence of least upper
(resp.\ greatest lower) bounds (cf.\ Theorem~\ref{thm-char-lattice}).

The paper is organized as follows. Section~\ref{sect-grad-spcs}
introduces the generalized concept of gradient that we shall be
studying, as well as the corresponding concept of gradient
space and the associated Sobolev space. 
These objects are the basic ingredients of our
analysis. Sections~\ref{subsect-DP} and~\ref{sec-obstacle-problem}
introduce the Dirichlet and obstacle problems together with the concept
of preordered gradient spaces. It is shown that, under certain
conditions, solutions of the Dirichlet and obstacle problems exist. In
Section~\ref{sec:rayleigh.quotient} we consider the Rayleigh quotient
which, in classical analysis, is related to finding the first positive
eigenvalue of the Laplace operator. Also here, a minimizer can be
found under certain assumptions.

In Section~\ref{sect-order} we investigate the possibility of defining
a least upper bound via a minimization problem. While the least upper
bound will in general not exist,
we show that a minimizer (in
norm) exists, retaining some of the properties of a least upper
bound. Sections~\ref{sect-ex-1} and~\ref{sect-ex-2} are devoted to
showing that many situations can be treated in a unified way within our
framework. The examples include both classical function spaces and
functions on metric spaces (together with their appropriate concepts
of gradients) as well as noncommutative examples such as spaces of
matrix algebras, operator algebras and operator-valued functions.

\begin{ack}
The  authors were supported by the Swedish Research Council.
\end{ack}

\section{Gradient spaces} 
\label{sect-grad-spcs}

\noindent When moving away from the realm of differentiable functions
defined on vector spaces, one is lead to introduce several (in
general, different) concepts of a derivative, or
gradient. For instance, one may consider weak derivatives on
$\R^n$ or upper gradients on metric spaces. In this section, we
will introduce a very weak abstract
notion of gradient, which encompasses many
of the situations one would like to study. There is actually no
mathematical reason for using the name gradient (as the assumptions
only include a weak form of linearity), but we have chosen to keep the
terminology both for historical reasons and in view of many
applications. Moreover, we shall introduce a corresponding pair of Banach
spaces, which provides a link between the gradient and the analytic
structure of the normed space.

Let us start by introducing the concept of a gradient, in the form of
a relation on the Cartesian product of two vector spaces.

\begin{definition}\label{def:upper.gradient.relation}
  Let $\Vt$ and $\Wt$ be vector spaces over $\reals$ or $\complex$ (not
  necessarily the same for $\Vt$ and $\Wt$). A 
  \emph{gradient relation}
  is a relation $R\subseteq \Vt\times \Wt$ such that
  \begin{enumerate}[label=(G\arabic{*}),ref=(G\arabic{*}),leftmargin=15mm]
  \item if $(u,g)\in R$ and $(u',g')\in R$ then $(u+u',g+g')\in R$,\label{g:sum}
  \item if $(u,g)\in R$ and $\alpha>0$ then $(\alpha u,\alpha g)\in R$.\label{g:mult.scalar}
  \end{enumerate}
  We say that \emph{$g$ is a gradient of $u$} if $(u,g)\in R$.
\end{definition}

\noindent To be able to address analytic questions, we introduce
subspaces $V$ and $W$, of $\Vt$ and $\Wt$, which have the structure of
Banach spaces.  Note that 
property~\ref{gs:upper.gradient.limit}
below is important because it
connects the gradient relation to the analytic structure 
of the space.

\begin{definition}\label{def:gradient.space}
  A \emph{gradient space} $\U=\paraa{V,\Vt,W,\Wt,R}$ consists
  of two vector spaces $\Vt$ and $\Wt$ together with a gradient relation
  $R\subseteq \Vt\times \Wt$ and linear subspaces $V\subseteq \Vt$ and
  $W\subseteq \Wt$ such that
  \begin{enumerate}[label=(GS\arabic{*}),ref=(GS\arabic{*}),leftmargin=15mm]
  \item $V$ is a reflexive Banach space,\label{gs:V.reflexive}
  \item $W$ is a reflexive and strictly convex Banach space,\label{gs:W.reflexive.convex}
  \item if $(u,g)\in R$ with $u\in V$ and $g\in W$, then there exists
    $g'\in W$ such that $(-u,g')\in R$,\label{gs:minus.gradient}
  \item if $u,u_i\in V$ and $g,g_i\in W$ with $(u_i,g_i)\in R$,
    for $i=1,2,\ldots$, are  such that $\normV{u-u_i}\to 0$ and
    $\normW{g-g_i}\to 0$, then $(u,g)\in R$,\label{gs:upper.gradient.limit}
  \end{enumerate}
  where $\normV{\cdot}$ and $\normW{\cdot}$ denote the norms of $V$
  and $W$, respectively.
\end{definition}

\noindent
Note that for classical derivatives and gradients, as well
as for weak gradients, one has $g'=-g$ in \ref{gs:minus.gradient},
while for (weak) upper gradients and Haj\l asz gradients
(see Sections~\ref{sec:pweak.upper.gradient}--\ref{sec:Hajlasz.gradient}) one
has $g'=g$.
In \ref{gs:minus.gradient} we allow for even more general situations,
and do not even require a uniform bound of the form
$\normW{g'} \le C \normW{g}$.

Just as for $L^p$-spaces, it is natural to introduce a
subspace of $V$, consisting of elements which have a gradient in
$W$. 

\medskip
\noindent
\emph{Assume for the rest of this section that $\U=(V,\Vt,W,\Wt,R)$
is a gradient space.}

\begin{definition}
The set
  \begin{align*}
    \SobU := \{u\in V: (u,g)\in R \text{ for some }  g\in W\}
  \end{align*}
is called the \emph{Sobolev space of the gradient space $\U$}.
\end{definition}

\begin{lemma} \label{lem-Sob-vector-space} 
  If $\SobU\neq\emptyset$, then $\SobU$ is
  a vector space over $\reals$.
\end{lemma}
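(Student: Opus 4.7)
The plan is to verify that $\Sob(\U)$ is closed under the two vector-space operations inherited from $V$, namely addition and real scalar multiplication. Nonemptiness is assumed, and $V$ is already a vector space, so all vector-space axioms (associativity, distributivity, existence of inverses, etc.) will be automatic once closure is established.

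For addition, I would take $u_1, u_2 \in \Sob(\U)$ and pick witnesses $g_1, g_2 \in W$ with $(u_i, g_i) \in R$. Property \ref{g:sum} immediately yields $(u_1 + u_2, g_1 + g_2) \in R$, and $g_1 + g_2 \in W$ because $W$ is a linear subspace of $\Wt$, so $u_1 + u_2 \in \Sob(\U)$.

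Scalar multiplication by a positive $\alpha$ is handled by \ref{g:mult.scalar}: if $(u,g) \in R$ then $(\alpha u, \alpha g) \in R$, and again $\alpha g \in W$. The case $\alpha < 0$ is where axiom \ref{gs:minus.gradient} is needed: from $(u,g) \in R$ with $u \in V, g \in W$, \ref{gs:minus.gradient} provides some $g' \in W$ with $(-u, g') \in R$, and then \ref{g:mult.scalar} applied to $|\alpha|$ gives $(\alpha u, |\alpha| g') \in R$, placing $\alpha u$ in $\Sob(\U)$. Finally, the case $\alpha = 0$ requires producing a gradient of the zero element; I would pick any $u \in \Sob(\U)$ (using nonemptiness) with gradient $g$, obtain $g'$ from \ref{gs:minus.gradient}, and combine them via \ref{g:sum} to get $(0, g+g') \in R$, so $0 \in \Sob(\U)$.

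None of these steps is really difficult; the one point that requires thought rather than bookkeeping is noticing that the negative-scalar and zero cases are not immediate from \ref{g:mult.scalar} alone, since that axiom only covers $\alpha > 0$, and that one must invoke \ref{gs:minus.gradient} to bridge the gap. This is also presumably why the statement is restricted to the real field: scalar multiplication by complex numbers would require an analogue of \ref{gs:minus.gradient} for multiplication by $i$, which the axioms do not supply.
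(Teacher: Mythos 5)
Your proposal is correct and follows essentially the same route as the paper: closure under addition via \ref{g:sum}, positive scalars via \ref{g:mult.scalar}, negative scalars and the zero element via \ref{gs:minus.gradient} combined with the other two axioms. Your closing remark about why \ref{gs:minus.gradient} is the crucial ingredient (and why the field is $\reals$) matches the intent of the paper's argument.
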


\begin{proof}
As $\SobU\neq\emptyset$, there exists $u\in\SobU$ which, by
  \ref{gs:minus.gradient}, implies that $-u\in\SobU$.  From \ref{g:sum}
  it follows that if $u,u'\in\SobU$ then $u+u'\in\SobU$, since 
  $V$ and $W$ are vector spaces. 
  In particular, this implies that $u+(-u)=0\in\SobU$.
  Finally, from \ref{g:mult.scalar} and \ref{gs:minus.gradient} (and
  the fact that $0\in\SobU$) it follows that if $u\in\SobU$ and
  $\alpha\in\reals$ then $\alpha u\in\SobU$.
\end{proof} 

\begin{lemma}\label{lemma:DpU.nonempty.zero.gradient}
  If $\SobU\neq\emptyset$, then $(0,0)\in R$.
\end{lemma}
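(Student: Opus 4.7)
The plan is to produce, from any existing gradient pair, a sequence of gradient pairs whose second coordinates shrink to zero, and then invoke the closure axiom \ref{gs:upper.gradient.limit} to conclude $(0,0)\in R$.

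Concretely, since $\Sob(\U)\neq\emptyset$, Lemma~\ref{lem-Sob-vector-space} gives $0\in\Sob(\U)$, so there exists some $g\in W$ with $(0,g)\in R$. I would then apply \ref{g:mult.scalar} with the positive scalars $\alpha_i = 1/i$ to obtain $(0, g/i)\in R$ for each $i\geq 1$. Setting $u=0$, $u_i=0$, $g_i = g/i$, we have $\|u-u_i\|_V = 0$ and $\|g_i\|_W = \|g\|_W/i \to 0$, so the hypotheses of \ref{gs:upper.gradient.limit} are satisfied with limit pair $(0,0)$, and the closure axiom yields $(0,0)\in R$.

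There is no real obstacle here; the only subtlety is that one must first produce \emph{some} gradient of $0$ (which requires knowing $0\in\Sob(\U)$, hence the appeal to Lemma~\ref{lem-Sob-vector-space}), and then observe that axiom~\ref{g:mult.scalar} is available only for positive scalars, which is nevertheless enough to scale the second coordinate to $0$ in the limit. The closure axiom then does all the work.
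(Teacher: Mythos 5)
Your proof is correct and follows the paper's own argument exactly: use Lemma~\ref{lem-Sob-vector-space} to get $0\in\SobU$ and hence some gradient $g$ of $0$, scale by $1/i$ via \ref{g:mult.scalar}, and pass to the limit with \ref{gs:upper.gradient.limit}. Nothing further is needed.
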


\begin{proof}
  If $\SobU\neq\emptyset$, 
then $0 \in \SobU$, by Lemma~\ref{lem-Sob-vector-space}, and thus there
is $g \in W$ such that $(0,g) \in R$.
Hence,  $(0,g/k)\in R$ for $k=1,2,\ldots$, by \ref{g:mult.scalar}.
Since $\normW{g/k}=\normW{g}/k\to 0$, as $k \to \infty$, 
it follows from 
  \ref{gs:upper.gradient.limit} that $(0,0)\in R$.
\end{proof}

\noindent The following results (Lemma~\ref{lem:weak.strong.closure},
Corollary~\ref{cor:weak.convex.combination} and 
Lemma~\ref{lemma:bounded.seq.upper.gradient}) are fairly standard, but we
have chosen to repeat them here adjusted to our setting. They constitute a
set of very useful technical results, and concern the possibility of
constructing strongly convergent sequences from weakly convergent
sequences, by using convex combinations.

\begin{lemma}[Theorem~3.12 in Rudin~\cite{r:functionalAnalysis}]\label{lem:weak.strong.closure}
  If $E$ is a convex subset of a locally convex space, 
   then the weak
  closure of $E$ equals the\/ \textup{(}strong\/\textup{)} closure of $E$.
\end{lemma}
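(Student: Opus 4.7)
The plan is to prove the two inclusions separately, the easy one by general topology and the harder one via a Hahn--Banach separation argument which is precisely where convexity is used.

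First I would show that the strong closure $\overline{E}^{\,s}$ is contained in the weak closure $\overline{E}^{\,w}$. This uses only that the weak topology is coarser than the original topology: any weakly closed set is strongly closed, so the weak closure is a strongly closed set containing $E$, and hence contains the strong closure. No convexity is needed here.

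The reverse inclusion is the substantive part. Let $X$ be the ambient locally convex space and suppose $x \in X \setminus \overline{E}^{\,s}$; my goal is to produce a weakly open neighbourhood of $x$ that misses $E$. The set $\overline{E}^{\,s}$ is convex because the closure of a convex set in a topological vector space is convex (continuity of addition and scalar multiplication), and it is strongly closed by definition. Since $\{x\}$ is a compact convex set disjoint from the closed convex set $\overline{E}^{\,s}$, the geometric Hahn--Banach theorem for locally convex spaces provides a continuous linear functional $\Lambda$ on $X$ and a real constant $\alpha$ such that
\begin{equation*}
  \Re \Lambda(x) < \alpha < \Re \Lambda(y) \quad \text{for every } y \in \overline{E}^{\,s}.
\end{equation*}

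To finish, I would observe that $\Lambda$, being strongly continuous, is automatically continuous for the weak topology (which is defined as the coarsest topology making all continuous linear functionals continuous). Therefore the half-space
\begin{equation*}
  U := \{ z \in X : \Re \Lambda(z) < \alpha \}
\end{equation*}
is a weakly open set containing $x$ and disjoint from $E \subseteq \overline{E}^{\,s}$. This shows $x \notin \overline{E}^{\,w}$, completing the proof.

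The main obstacle is the invocation of Hahn--Banach separation in the locally convex (not normed) setting; this is the step where the hypotheses of local convexity and convexity of $E$ are both essential, and it is also where one must be careful to separate the point $\{x\}$ from the strong closure $\overline{E}^{\,s}$ rather than from $E$ itself, so that the closed-set version of the theorem applies.
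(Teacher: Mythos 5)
Your proof is correct: the coarser-topology argument gives one inclusion, and the Hahn--Banach separation of a point from the (convex, closed) strong closure, together with the weak continuity of continuous linear functionals, gives the other. The paper itself offers no proof of this lemma but simply cites Theorem~3.12 in Rudin, and your argument is precisely the standard separation-based proof given there, so there is nothing to add beyond noting that the empty case is trivial and needs no separation.
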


\noindent
The following results can be found in the literature under the name 
Mazur's lemma, with the slight difference that the linear combinations
are usually taken starting from $j=1$. 
To prove Mazur's lemma in the form given here, an iterative argument 
is then needed.
We therefore provide a full proof of the result, tailored to our needs.
Note that the sums below start at $j=i$, which
 will be important when we apply this result.
Recall that a \emph{convex combination}  
is a linear combination with nonnegative coefficients 
summing up to one, such that
only finitely many 
of them are positive.

\begin{corollary}\label{cor:weak.convex.combination}
  Let $\{u_i\}_{i=1}^\infty$ be a weakly convergent sequence with\/ 
  weak 
  limit $u$ in a normed vector space. Then there exist convex
  combinations
  \begin{align*}
    \tilde{u}_i = \sum_{j=i}^{N_i}\alpha_{ij}u_j
    \quad \text{with }
\alpha_{ij}\geq 0\textrm{ and }\sum_{j=i}^{N_i}\alpha_{ij}=1,
  \end{align*}
  such that the sequence $\{\tilde{u}_i\}_{i=1}^\infty$ is strongly
  convergent to $u$.  
\end{corollary}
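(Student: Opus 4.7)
The plan is to apply Lemma~\ref{lem:weak.strong.closure} to the convex hull of each \emph{tail} of the sequence. For $i\geq 1$, let
\[
E_i := \bigg\{\sum_{j=i}^{N}\alpha_j u_j : N\geq i,\ \alpha_j\geq 0,\ \sum_{j=i}^{N}\alpha_j = 1\bigg\}
\]
be the convex hull of the tail $\{u_j\}_{j\geq i}$. This is a convex set, and every element of $E_i$ is a convex combination of the required form (the nonzero coefficients being finite in number by construction).

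Next I would observe that $u$ lies in the weak closure of $E_i$. Indeed, the tail sequence $\{u_j\}_{j\geq i}$ still converges weakly to $u$ (weak convergence is unaffected by dropping finitely many terms), and each $u_j$ with $j\geq i$ belongs to $E_i$. Hence $u$ is a weak limit of elements of $E_i$. By Lemma~\ref{lem:weak.strong.closure}, the weak closure of the convex set $E_i$ coincides with its norm closure, so $u$ lies in the strong closure of $E_i$.

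Consequently, for each $i$ I can pick $\tilde{u}_i\in E_i$ with $\normV{\tilde{u}_i - u} < 1/i$ (or use whatever norm the ambient space carries; the lemma is stated in a general normed space). By definition of $E_i$, this $\tilde{u}_i$ has the form $\sum_{j=i}^{N_i}\alpha_{ij}u_j$ with $\alpha_{ij}\geq 0$ and $\sum_{j=i}^{N_i}\alpha_{ij}=1$, and the sequence $\{\tilde{u}_i\}$ converges strongly to $u$.

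There is no real obstacle here; the only point that requires attention is that the sum must start at $j=i$ rather than $j=1$, which is precisely what taking the convex hull of the tail (rather than of the full sequence) accomplishes. Once Lemma~\ref{lem:weak.strong.closure} is invoked on $E_i$, the desired tail form of the convex combinations is automatic.
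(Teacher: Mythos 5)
Your proposal is correct and follows essentially the same route as the paper's own proof: form the convex hull $E_i$ of the tail $\{u_j\}_{j\ge i}$, note that $u$ lies in its weak closure (since the tail still converges weakly to $u$), invoke Lemma~\ref{lem:weak.strong.closure} to place $u$ in the strong closure, and select $\tilde{u}_i\in E_i$ with $\norm{\tilde{u}_i-u}<1/i$. No gaps.
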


\begin{proof}
Let $E_i$ denote the convex hull of
  $\{u_i,u_{i+1},\ldots\}$ (i.e.\ the set of its convex
  combinations), and let $\itoverline{E}_i^{\mspace{1mu}w}$ 
and $\itoverline{E}_i$ denote the weak
  and strong closures of $E_i$, respectively. 
By assumption,
  $u\in \itoverline{E}_i^{\mspace{1mu}w}$ for $i=1,2,\ldots$.  Furthermore, it follows from
  Lemma~\ref{lem:weak.strong.closure} that $u\in \itoverline{E}_i$ for
  $i=1,2,\ldots$, which implies that there exists, for each $i$, 
$\tilde{u}_i\in E_i$ such that
  \begin{align*}
    \norm{\tilde{u}_i-u}<\frac{1}{i}.
  \end{align*}
Thus $\tilde{u}_i \to u$ strongly, 
and as
$\tilde{u}_i\in E_i$, it 
is a convex combination of the elements $u_i,u_{i+1},u_{i+2},\ldots$.
\end{proof}

\begin{lemma}\label{lemma:bounded.seq.upper.gradient}
  Let 
  $\{u_i\}_{i=1}^\infty\subseteq V$ and
  $\{g_i\}_{i=1}^\infty\subseteq W$ be bounded
  sequences such that $(u_i,g_i)\in R$ for $i=1,2,\ldots$. Then there
  exist  
convex combinations
  \begin{align*}
    \tilde{u}_i = \sum_{j=i}^{N_i}\alpha_{ij}u_j\quad\textrm{and}\quad
    \tilde{g}_i = \sum_{j=i}^{N_i}\alpha_{ij}g_j,
  \end{align*}
  with limits $u:=\lim_{i\to\infty}\tilde{u}_i$ and
  $g:=\lim_{i\to\infty}\tilde{g}_i$, such that $(u,g)\in R$.
\end{lemma}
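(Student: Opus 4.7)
The plan is to combine the reflexivity of $V$ and $W$ with Corollary~\ref{cor:weak.convex.combination} applied in a product space, so that a single set of convex-combination coefficients handles both sequences simultaneously; the closure property \ref{gs:upper.gradient.limit} then delivers the final gradient relation.

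First, since $V$ is reflexive and $\{u_i\}$ is bounded, there is a subsequence $\{u_{i_k}\}$ with $u_{i_k}\rightharpoonup u$ weakly in $V$. Since $W$ is also reflexive and the corresponding subsequence $\{g_{i_k}\}$ is bounded, a further subsequence (which by abuse of notation I continue to denote $\{i_k\}$) satisfies $g_{i_k}\rightharpoonup g$ weakly in $W$ for some $g\in W$. Here I may assume $i_k\ge k$ for all $k$. The pairs $(u_{i_k},g_{i_k})$ then converge weakly to $(u,g)$ in the reflexive normed space $V\times W$ equipped with the norm $\normV{\cdot}+\normW{\cdot}$.

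Next, I apply Corollary~\ref{cor:weak.convex.combination} to the weakly convergent sequence $\{(u_{i_k},g_{i_k})\}_{k=1}^\infty$ in $V\times W$. This yields, for each $k$, a convex combination
\begin{align*}
  (\htd_k,\st_k)=\sum_{\ell=k}^{M_k}\beta_{k\ell}(u_{i_\ell},g_{i_\ell}),
  \quad\beta_{k\ell}\ge 0,\ \sum_{\ell=k}^{M_k}\beta_{k\ell}=1,
\end{align*}
such that $\htd_k\to u$ strongly in $V$ and $\st_k\to g$ strongly in $W$, with the \emph{same} coefficients. Setting $N_k=i_{M_k}$, and $\alpha_{kj}=\beta_{k\ell}$ when $j=i_\ell$ for some $\ell\in\{k,\ldots,M_k\}$ and $\alpha_{kj}=0$ otherwise, I can rewrite these as
\begin{align*}
  \htd_k=\sum_{j=k}^{N_k}\alpha_{kj}u_j,\qquad
  \st_k=\sum_{j=k}^{N_k}\alpha_{kj}g_j,
\end{align*}
which is permissible because $i_\ell\ge \ell\ge k$ and the nonzero coefficients still sum to one. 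Thus the index condition $j\ge i$ from the statement is satisfied.

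Finally, by repeated application of \ref{g:sum} and \ref{g:mult.scalar} (using that $\alpha_{kj}\ge 0$, and discarding any zero terms), one obtains $(\htd_k,\st_k)\in R$ for each $k$. Since $\normV{\htd_k-u}\to 0$ and $\normW{\st_k-g}\to 0$, axiom~\ref{gs:upper.gradient.limit} gives $(u,g)\in R$, which is the desired conclusion. The main delicate point is the bookkeeping that ensures the two convex combinations share coefficients; passing to the product space $V\times W$ and applying Corollary~\ref{cor:weak.convex.combination} there is precisely what resolves this issue.
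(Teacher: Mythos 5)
Your proof is correct, but it takes a genuinely different route from the paper. You extract a subsequence along which both $\{u_i\}$ and $\{g_i\}$ converge weakly (using reflexivity of $V$ and $W$), pass to the product space $V\times W$ with the sum norm, note that weak convergence of the pair is equivalent to componentwise weak convergence since $(V\times W)^*\cong V^*\times W^*$, and then apply Corollary~\ref{cor:weak.convex.combination} \emph{once} in the product space; this automatically produces a single set of coefficients serving both components, and your re-indexing (using $i_k\ge k$) correctly restores the form $\sum_{j=i}^{N_i}\alpha_{ij}$ demanded by the statement. The paper instead works in two stages entirely within $V$ and $W$: it first extracts a weakly convergent subsequence of $\{u_i\}$ only (the gradients need only be carried along as a bounded sequence), applies the corollary to get strongly convergent combinations $\hat u_i$ with corresponding combinations $\hat g_i$, then extracts a weakly convergent subsequence of $\{\hat g_i\}$, applies the corollary a second time, and finally must verify by the triangle inequality that the second-stage combinations of the $\hat u_{j_k}$ still converge to $u$. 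Your product-space argument buys a shorter proof with the shared coefficients built in and no need for that final verification; the paper's iterative argument buys the avoidance of the product space and its dual, staying with sequences in $V$ and $W$ separately. Both rely on the same ingredients (reflexivity to extract weak limits of bounded sequences, the Mazur-type corollary with tails starting at $j=i$, closure of $R$ under convex combinations via \ref{g:sum} and \ref{g:mult.scalar}, and axiom \ref{gs:upper.gradient.limit} to conclude $(u,g)\in R$), so either is a complete proof of the lemma.
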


\begin{proof}
  Since $\{u_i\}_{i=1}^\infty$ is a bounded sequence and $V$ is
  reflexive (by property \ref{gs:V.reflexive}), 
  Banach--Alaoglu's theorem shows that there exists a weakly convergent
  subsequence, with some weak limit $u$, which, by a slight abuse of
  notation, we shall also denote by $\{u_i\}_{i=1}^\infty$. Moreover,
  we assume that $\{g_i\}_{i=1}^\infty$ denotes the corresponding (not
  necessarily weakly convergent) subsequence of gradients. From
  Corollary~\ref{cor:weak.convex.combination} it follows that there
  exist convex combinations
  \begin{align*}
     \hat{u}_i = \sum_{j=i}^{\widehat{N}_i}\widehat{\alpha}_{ij}u_j
  \end{align*}
  such that the sequence $\{\hat{u}_i\}_{i=1}^\infty$ converges
  (strongly) to $u$. The corresponding linear combinations of
  gradients 
  \begin{align*}
    \hat{g}_i = \sum_{j=i}^{\widehat{N}_i}\widehat{\alpha}_{ij}g_j
  \end{align*}
  fulfill $(\hat{u}_i,\hat{g}_i)\in R$ for $i=1,2,\ldots$ (which
  follows from \ref{g:sum} and \ref{g:mult.scalar}). Now, since
  $\{\hat{g}_i\}_{i=1}^\infty$ is still a bounded sequence
  and $W$ is reflexive, Banach--Alaoglu's theorem again shows that there
  exists a weakly convergent subsequence
  $\{\hat{g}_{j_k}\}_{k=1}^\infty$ with some weak limit $g$. Using
  Corollary~\ref{cor:weak.convex.combination} once more, one finds convex
  combinations 
  \begin{align*}
    \tilde{g}_i = \sum_{k=i}^{N_i}\widetilde{\alpha}_{ik}\hat{g}_{j_k}
  \end{align*}
  such that the sequence $\{\tilde{g}_i\}_{i=1}^\infty$ converges
  (strongly) to $g$. The corresponding linear combinations
  \begin{align*}
    \tilde{u}_i = \sum_{k=i}^{N_i}\widetilde{\alpha}_{ik}\hat{u}_{j_k}
  \end{align*}
  fulfill $(\tilde{u}_i,\tilde{g}_i)\in R$ for $i=1,2,\ldots$. Finally,
  let us show that $\{\tilde{u}_i\}_{i=1}^\infty$ converges to
  $u$. Namely, since the sequence $\{\hat{u}_i\}_{i=1}^\infty$
  converges to $u$, the subsequence 
$\{\hat{u}_{j_k}\}_{k=1}^\infty$
  converges to $u$. Furthermore,  
  \begin{align*}
    \normV{\tilde{u}_i-u} =
    \biggl\|\sum_{k=i}^{N_i}\widetilde{\alpha}_{ik}\hat{u}_{j_k}-\sum_{k=i}^{N_i}
           \widetilde{\alpha}_{ik}u\biggr\|\leq
    \sum_{k=i}^{N_i}\widetilde{\alpha}_{ik}\normV{\hat{u}_{j_k}-u},
  \end{align*}
  which shows that $\lim_{i\to\infty}\tilde{u}_i=u$ since
  $\lim_{i\to\infty}\hat{u}_{j_k}=u$. Hence, we have found two
  (strongly) convergent sequences $\{\tilde{u}_i\}_{i=1}^\infty$ and
  $\{\tilde{g}_i\}_{i=1}^\infty$ (given as convex combinations of
  the original bounded sequences), with $\lim_{i\to\infty}\tilde{u}_i=u$
  and $\lim_{i\to\infty}\tilde{g}_i=g$, such that
  $(\tilde{u}_i,\tilde{g}_i)\in R$. By property
  \ref{gs:upper.gradient.limit} it follows that $(u,g)\in R$.
\end{proof}

\noindent For the Sobolev space $\SobU$, it is natural to introduce
the norm
\begin{align*}
  \normSobU{u} = \normV{u} + \inf_{(u,g)\in R} \normW{g},
\end{align*}
where the infimum is taken over all gradients $g$ of $u$.
Since $g+g'$ is a gradient of $u+u'$ (where $g$ and $g'$ are gradients
of $u$ and $u'$, respectively) 
it is clear that the triangle inequality is
fulfilled, making $\normSobU{\cdot}$ a norm on $\SobU$. 

\begin{theorem}
The Sobolev space $\SobU$  
 is a Banach space.
\end{theorem}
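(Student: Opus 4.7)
The plan is to run the standard Banach-space completeness argument via a rapidly convergent subsequence and the series trick, using axiom \ref{gs:upper.gradient.limit} at the crucial moment to pass from a convergent series of gradients to a genuine gradient of the limit. Let $\{u_i\}$ be a Cauchy sequence in $\SobU$. Since $\normV{u_i-u_j}\le\normSobU{u_i-u_j}$, the sequence is Cauchy in $V$ and, by completeness of the reflexive Banach space $V$ (axiom \ref{gs:V.reflexive}), converges to some $u\in V$. The remaining task is to show that $u\in\SobU$ and that $u_i\to u$ in the $\SobU$-norm.

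After passing to a subsequence, which I still denote $\{u_i\}$, satisfying $\normSobU{u_{i+1}-u_i}<2^{-i}$, I would exploit the infimum in the definition of $\normSobU{\cdot}$ to pick, for each $i$, a gradient $h_i\in W$ of $u_{i+1}-u_i$ with $\normW{h_i}<2^{-i+1}$. Then $\sum_i\normW{h_i}<\infty$, so the partial sums $s_m:=\sum_{i=1}^{m}h_i$ form a Cauchy sequence and converge in $W$ to some $g$. Iterating \ref{g:sum} (and \ref{g:mult.scalar}) shows $(u_{m+1}-u_1,s_m)\in R$ for every $m$. Since $u_{m+1}-u_1\to u-u_1$ in $V$ and $s_m\to g$ in $W$, axiom \ref{gs:upper.gradient.limit} yields $(u-u_1,g)\in R$, hence $u-u_1\in\SobU$; combining this with $u_1\in\SobU$ and Lemma~\ref{lem-Sob-vector-space} gives $u\in\SobU$.

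The same closure principle then delivers convergence in the $\SobU$-norm. For each fixed $k$, the partial sums $\sum_{i=k}^{m}h_i$ are gradients of $u_{m+1}-u_k$ and converge in $W$ to a tail element $g_k:=\sum_{i=k}^{\infty}h_i$ with $\normW{g_k}\le 2^{-k+2}$, while $u_{m+1}-u_k\to u-u_k$ in $V$. Axiom \ref{gs:upper.gradient.limit} gives $(u-u_k,g_k)\in R$, so
\[
  \normSobU{u-u_k}\le\normV{u-u_k}+\normW{g_k}\longrightarrow 0.
\]
Because the original sequence is Cauchy in $\SobU$ and a subsequence of it converges to $u$, the entire sequence converges to $u$.

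I do not anticipate a real obstacle: once the fast subsequence and the telescoping series of gradients are in place, everything reduces to verifying that the series of $h_i$'s converges absolutely in $W$ and that its limit is indeed a gradient. That last step is exactly the content of \ref{gs:upper.gradient.limit}, which is the only axiom linking $R$ with the topologies on $V$ and $W$. Reflexivity of $V$ is used merely to get the $V$-limit via completeness; strict convexity of $W$ and reflexivity of $W$ are not needed for this theorem (they enter later when producing a unique minimal gradient).
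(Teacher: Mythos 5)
Your proof is correct and follows the same overall scheme as the paper's (fast subsequence, telescoping differences whose chosen gradients have summable norms, then closure under limits via \ref{gs:upper.gradient.limit}), but it differs at the key step of producing the limit gradient. The paper only records that the partial sums $h_{lj}=\sum_{i=1}^j g_{k_{l+i-1},k_{l+i}}$ are bounded in $W$ and then invokes Lemma~\ref{lemma:bounded.seq.upper.gradient}, i.e.\ Banach--Alaoglu plus a Mazur-type argument with convex combinations, which uses reflexivity of $V$ and $W$, to obtain a gradient $h_l$ of $u_{k_l}-u$ together with the bound $\normW{h_l}<2^{1-l}$. You observe instead that these partial sums converge absolutely, hence strongly, in the Banach space $W$, so \ref{gs:upper.gradient.limit} applies directly to the strongly convergent pairs and no weak-compactness machinery is needed. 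This makes the argument more elementary and makes explicit, as you note, that only completeness of $V$ and $W$ is used here; reflexivity of $V$ and $W$ and strict convexity of $W$ enter only later (e.g.\ for minimal gradients and the variational problems). A further small difference: the paper exhibits gradients of $u_k-u$ for all large $k$, while you establish $\SobU$-convergence only along the fast subsequence and then conclude by the standard fact that a Cauchy sequence with a convergent subsequence converges; both are valid. One minor remark: from $\normSobU{u_{i+1}-u_i}<2^{-i}$ you could even pick $h_i$ with $\normW{h_i}<2^{-i}$, though your bound $2^{-i+1}$ works just as well.
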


\begin{proof}
  It is easy to check that $(\SobU,\normSobU{\cdot})$ is a normed
  space, so the only thing remaining is to show that $\SobU$ is
  complete with respect to the given norm.  Let $\{u_j\}_{j=1}^\infty$
  be a Cauchy sequence in $\SobU$, i.e.\ for every $n=0,1,\ldots$,
  there exists an index $k_n$ such that 
  \[
    \|u_j-u_k\|_\SobU < 2^{-n} \quad \text{whenever } j,k\ge k_n.
  \]
  In particular, $\|u_j-u_k\|_V < 2^{-n}$ and
  there exists a gradient $g_{jk}$ of $u_j-u_k$ such that
  $\|g_{jk}\|_W < 2^{-n}$ whenever $j,k\ge k_n$.  It follows directly
  that $\{u_j\}_{j=1}^\infty$ is a Cauchy sequence in $V$, and thus
  has a limit $u:=\lim_{j\to \infty} u_j$, as $V$ is complete.

Our next aim is to show that $u_j\to u$ also in $\SobU$, and we shall
proceed by constructing gradients for $u_{k_l}-u$
that tend to zero in $W$ as
$l\to\infty$. Thus, for every $j$ and $l$ let
\[
s_{lj} := u_{k_l} - u_{k_{l+j}} = \sum_{i=1}^j (u_{k_{l+i-1}} - u_{k_{l+i}})
\quad \text{and} \quad
h_{lj} = \sum_{i=1}^j g_{k_{l+i-1},k_{l+i}},
\]
from which it follows that $h_{lj}$ is a gradient of
$s_{lj}$. Moreover, we set 
\[
s_l:=\lim_{j\to\infty} s_{lj}=u_{k_l}-u.
\]
As 
\[
\|s_{lj}\|_V \le \sum_{i=1}^j 2^{-(l+i-1)} < 2^{1-l}
\quad \text{and} \quad
\|h_{lj}\|_W \le \sum_{i=1}^j 2^{-(l+i-1)} < 2^{1-l},
\]
the sequences $\{s_{lj}\}_{j=1}^{\infty}$ and
$\{h_{lj}\}_{j=1}^\infty$ are bounded. 
An application of Lemma~\ref{lemma:bounded.seq.upper.gradient}
provides us with convex combinations
\[
\st_{li} = \sum_{j=i}^{N_i} \al_{ij}s_{lj}\quad\textrm{and}\quad
\htd_{li} = \sum_{j=i}^{N_i} \al_{ij}h_{lj}
\]
such that $\st_{li} \to s_l$ in $V$ and $\htd_{li} \to h_l$
in $W$, as $i\to\infty$, and $(s_l,h_l)\in R$.
Furthermore,
\[
\|h_l\|_W = \lim_{i\to\infty} \|\htd_{li}\|_W \le \lim_{i\to\infty} \|h_{li}\|_W
< 2^{1-l}.
\]
Finally, for every $k\ge k_l$, we see that $h_l+g_{k_l,k}$ is a gradient of
$u_k-u=(u_k-u_{k_l})+s_l$ and 
\[
\|h_l+g_{k_l,k}\|_W \le \|h_l\|_W + \|g_{k_l,k}\|_W < 2^{1-l} + 2^{-l}
\to 0, \quad \text{as } l \to \infty.
\]
Letting $l\to \infty$ we conclude that
\[
\|u_k-u\|_\SobU \le \|u_k-u\|_V + \|h_l+g_{k_l,k}\|_W \to 0,
\quad \text{as } k\to\infty.
\qedhere
\]
\end{proof}

\noindent In general, there can be many gradients of an element $u\in V$, 
see e.g.\ Sections~\ref{sec:pweak.upper.gradient}--\ref{sec-grad-from-PI},
but we shall mainly be interested in the minimal one (in the
following sense).

\begin{definition}
  Let 
  $u\in V$. An element $g_u\in W$ 
is a \emph{minimal gradient of $u$} if $(u,g_u)\in R$ and
  \[
    \normW{g_u}\leq \normW{g} \quad \text{for all } 
g\in W \text{ such that } (u,g)\in R.
  \]
\end{definition}

\begin{theorem}\label{thm:DpU.minimal.gradient}
  Every element $u\in\SobU$ has a unique minimal gradient.
\end{theorem}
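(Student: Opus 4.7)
The plan is to prove existence of a minimal gradient by a direct minimizing-sequence argument, invoking Lemma~\ref{lemma:bounded.seq.upper.gradient} to pass to the limit, and then deduce uniqueness from the strict convexity of $W$ assumed in \ref{gs:W.reflexive.convex}.

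For existence, let $u\in\SobU$ and set $m:=\inf\{\normW{g}:(u,g)\in R\}$, which is finite since the set is nonempty by the definition of $\SobU$. Choose a minimizing sequence $\{g_i\}_{i=1}^\infty\subseteq W$ with $(u,g_i)\in R$ and $\normW{g_i}\to m$. Consider the constant sequence $u_i:=u$; both $\{u_i\}$ and $\{g_i\}$ are bounded, so Lemma~\ref{lemma:bounded.seq.upper.gradient} furnishes convex combinations
\[
\tilde{u}_i=\sum_{j=i}^{N_i}\alpha_{ij}u_j=u\quad\text{and}\quad\tilde{g}_i=\sum_{j=i}^{N_i}\alpha_{ij}g_j\to g_u\quad\text{in }W
\]
with $(u,g_u)\in R$. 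By the triangle inequality,
\[
\normW{\tilde{g}_i}\leq\sum_{j=i}^{N_i}\alpha_{ij}\normW{g_j}\leq\sup_{j\geq i}\normW{g_j},
\]
and the right-hand side tends to $m$ as $i\to\infty$. Hence $\normW{g_u}=\lim_{i\to\infty}\normW{\tilde{g}_i}\leq m$, and since $g_u$ is itself a gradient of $u$, equality must hold. Thus $g_u$ is a minimal gradient.

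For uniqueness, suppose $g_u$ and $g_u'$ are both minimal gradients of $u$, so $\normW{g_u}=\normW{g_u'}=m$. Applying \ref{g:mult.scalar} with $\alpha=\tfrac12$ to each of $(u,g_u)$ and $(u,g_u')$, and then \ref{g:sum}, yields $(u,\tfrac12(g_u+g_u'))\in R$, so $\tfrac12(g_u+g_u')$ is a gradient of $u$ and therefore has norm at least $m$. On the other hand, the triangle inequality gives $\normW{\tfrac12(g_u+g_u')}\leq m$, so equality holds. If $m=0$, then $g_u=g_u'=0$; otherwise, writing $g_u/m$ and $g_u'/m$ as unit vectors whose midpoint also has norm $1$ contradicts the strict convexity of $W$ unless $g_u=g_u'$. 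The main (and only nontrivial) obstacle is the existence step, where Lemma~\ref{lemma:bounded.seq.upper.gradient} does the work of extracting a weakly convergent subsequence and converting it, via Mazur's lemma, into a strongly convergent sequence of convex combinations whose limit retains the gradient relation through \ref{gs:upper.gradient.limit}.
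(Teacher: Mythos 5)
Your proposal is correct and follows essentially the same argument as the paper: a minimizing sequence of gradients with the constant sequence $u_i=u$, Lemma~\ref{lemma:bounded.seq.upper.gradient} to extract convex combinations converging to a gradient realizing the infimum, and strict convexity of $W$ for uniqueness. Your explicit justification that $\tfrac12(g_u+g_u')$ is a gradient via \ref{g:mult.scalar} and \ref{g:sum}, and the separate treatment of the case $m=0$, are just slightly more detailed versions of steps the paper takes for granted.
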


\noindent
We will denote the minimal gradient of $u$ by $g_u$. Note that, if
$\alpha\ge 0$, then $g_{\alpha u}=\alpha g_u$ since
$\alp g_u$ is a gradient of $g_{\alp u}$ and (when $\alp >0$)
\begin{align*}
  \normW{g_u} = \normW{g_{\alpha^{-1}\alpha u}}
  \leq \alpha^{-1}\normW{g_{\alpha u}}\leq
  \alpha^{-1}\alpha\normW{g_u}
  =\normW{g_u},
\end{align*}
and the uniqueness in Theorem~\ref{thm:DpU.minimal.gradient}
shows that $g_{\alpha u}=\alpha g_u$.

\begin{proof}
  Let $I=\inf_{g}\normW{g}$, where the infimum is over all
  gradients of $u$ in $W$, and let $\{g_j\}_{j=1}^\infty$ 
  be a minimizing sequence, i.e.
  \begin{align*}
    \lim_{j\to\infty}\normW{g_j} = I,
  \end{align*}
  where $g_j$ is a gradient of $u$ for $j=1,2,\ldots$. The
  minimizing sequence is clearly bounded, and from 
  Lemma~\ref{lemma:bounded.seq.upper.gradient} (with $u_i=u$ for
  $i=1,2,\ldots$) it follows that there exist convex combinations
  $\gt_i=\sum_{j=i}^{N_i} \alp_{ij} g_j$ 
  converging to some $g\in W$ with $(u,g)\in R$.
 Since $g$
  is a gradient of $u$ one has $I\leq \normW{g}$, and 
thus 
  \begin{align*}
   I \le  \normW{g} = \lim_{i \to \infty} \normW{\gt_i}
   \leq\limsup_{j\to\infty}\normW{g_{j}}=I,
  \end{align*}
  which implies that $\normW{g}=I$. 

  Let us now prove uniqueness. If $g_1$ and $g_2$ are two minimal
  gradients of $u$ then $h=\frac{1}{2}(g_1+g_2)$ is also a gradient of $u$,
  which implies that
  \begin{align*}
    I\leq\normW{h}\leq \tfrac{1}{2}(\normW{g_1}+\normW{g_2})=I
  \end{align*}
  and we conclude that $h$ is also a minimal gradient of
  $u$. Hence
  \begin{align*}
    \normW{g_1}=\normW{g_2}=\normW{\tfrac{1}{2}(g_1+g_2)}
  \end{align*}
  and since $W$ is assumed to be a strictly convex space, it
  follows that $\normW{g_1-g_2}=0$, which proves that the minimal
  gradient is unique.
\end{proof}

\section{The Dirichlet problem}
\label{subsect-DP}

\noindent
\emph{Assume in  this section that $\U=(V,\Vt,W,\Wt,R)$
is a gradient space.}

\medskip

\noindent The classical Dirichlet problem for
harmonic (or \p-harmonic) functions can be formulated as follows:
subject to given boundary conditions in a domain,
one tries to find a weakly differentiable function 
whose (weak) gradient has minimal norm among all
such functions satisfying the boundary conditions.

In the setting of gradient spaces, we shall formulate the problem in
the following way. Let $\K_0$ be a subset of $\SobU$ (which, in the
classical setting, corresponds to the set of functions in a domain
with zero boundary values). Given $f\in\SobU$ we set
\begin{align*}
  \K_f=\K_0+f=\{v\in \SobU:v-f\in \K_0\},
\end{align*}
and think of $\K_f$ as the analogue of the set of functions which are
equal to $f$ on the boundary of the domain. A solution of the
Dirichlet problem with respect to $\K_f$ is then given by an
element $u\in\K_f$ such that
\begin{align*}
  \normW{g_u} = \inf_{v\in\K_f}\normW{g_v},
\end{align*}
where $g_u$ and $g_v$ denote the minimal gradients of $u$ and $v$,
respectively. As stated, the Dirichlet problem does not have enough
analytic structure to ensure the existence of a solution in the
general case. Therefore, a Poincar\'e inequality is introduced, which
allows one to obtain a bound on the norm of an element in $\K_0$ in
terms of the norm of its minimal gradient.

\begin{definition}
  A
  \emph{Poincar\'e set} is a subset $A\subseteq\SobU$ for which there
  exists a constant $C>0$ such that for all $u\in A$,
  \begin{align*}
    \normV{u}\leq C\normW{g}
  \end{align*}
  for all $g\in W$ such that $(u,g)\in R$.
\end{definition}

\noindent Under the assumption that $\K_0$ is a (closed and convex)
Poincar\'e set one can obtain the existence of a solution to the
Dirichlet problem. However, in general the solution will not be
unique, see Examples~\ref{ex-grad-max-1} and~\ref{ex-grad-max-2}.

\begin{theorem}\label{thm:dirichlet.problem}
  For any nonempty closed convex Poincar\'e set
  $\K_0\subseteq{\SobU}$ and $f\in\SobU$, the Dirichlet problem with
  respect to $\K_f=\K_0+f$ has at least one solution. 
Moreover, if $u_1$ and $u_2$ are solutions to
  the Dirichlet problem, then $g_{u_1}=g_{u_2}$.
\end{theorem}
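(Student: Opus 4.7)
The plan is to apply the direct method of the calculus of variations, with Lemma~\ref{lemma:bounded.seq.upper.gradient} replacing the usual weak compactness step. I set $I = \inf_{v \in \K_f}\normW{g_v}$ and pick a minimizing sequence $\{u_j\} \subseteq \K_f$ with $\normW{g_{u_j}} \to I$. The first task is boundedness in both $V$ and $W$: boundedness of $\{g_{u_j}\}$ is automatic, and for $\{u_j\}$ in $V$ I write $w_j := u_j - f \in \K_0$ and use \ref{gs:minus.gradient} to produce some $g'_f \in W$ with $(-f, g'_f) \in R$, so that \ref{g:sum} makes $g_{u_j} + g'_f$ a gradient of $w_j$. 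The Poincar\'e inequality on $\K_0$ then gives $\normV{w_j} \le C\normW{g_{u_j}+g'_f}$, which is uniformly bounded, and hence so is $\normV{u_j}$.

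Next I invoke Lemma~\ref{lemma:bounded.seq.upper.gradient} to extract convex combinations $\tilde{u}_i = \sum_{j=i}^{N_i}\alpha_{ij}u_j$ and $\tilde{g}_i = \sum_{j=i}^{N_i}\alpha_{ij}g_{u_j}$ with $\tilde{u}_i \to u$ in $V$, $\tilde{g}_i \to g$ in $W$, and $(u, g) \in R$. By convexity of $\K_0$, $\tilde{u}_i - f = \sum \alpha_{ij} w_j$ lies in $\K_0$, and strong $V$-convergence together with closedness of $\K_0$ gives $u - f \in \K_0$, i.e.\ $u \in \K_f$. To see that $u$ attains the infimum, I bound $\normW{g_u} \le \normW{g} = \lim_i \normW{\tilde{g}_i} \le \limsup_j \normW{g_{u_j}} = I$, where the middle inequality uses the triangle inequality for the convex combinations; combined with the trivial $\normW{g_u} \ge I$ coming from $u \in \K_f$, this yields $\normW{g_u} = I$.

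For the second assertion, given two solutions $u_1, u_2$ let $u = \tfrac{1}{2}(u_1 + u_2)$. Convexity of $\K_0$ again places $u$ in $\K_f$, and \ref{g:sum}--\ref{g:mult.scalar} imply that $\tfrac{1}{2}(g_{u_1} + g_{u_2})$ is a gradient of $u$. Hence $I \le \normW{g_u} \le \tfrac{1}{2}(\normW{g_{u_1}} + \normW{g_{u_2}}) = I$, so $\normW{g_{u_1} + g_{u_2}} = \normW{g_{u_1}} + \normW{g_{u_2}} = 2I$ with both summands of equal norm $I$; strict convexity of $W$ (property \ref{gs:W.reflexive.convex}) then forces $g_{u_1} = g_{u_2}$, with the case $I=0$ being trivial.

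The main subtlety I anticipate is what ``closed'' should mean for $\K_0$. Lemma~\ref{lemma:bounded.seq.upper.gradient} only delivers strong convergence in $V$, and the obvious gradient $\tilde{g}_i + g'$ of $\tilde{u}_i - u$ built via \ref{gs:minus.gradient} need not tend to zero in $W$, so I cannot upgrade the argument to convergence in $\SobU$. The proof therefore requires $\K_0$ to be closed in the strong $V$-topology; thanks to its convexity and Lemma~\ref{lem:weak.strong.closure}, this is equivalent to weak-$V$-closedness, which is the natural reading here.
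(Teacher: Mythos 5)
Your proposal is correct and follows essentially the same route as the paper's proof: boundedness of the minimizing sequence via the Poincar\'e inequality together with \ref{gs:minus.gradient} and \ref{g:sum}, extraction of convex combinations via Lemma~\ref{lemma:bounded.seq.upper.gradient}, convexity and closedness of $\K_0$ to place the limit in $\K_f$, the $\limsup$ bound to identify $\normW{g_u}=I$, and strict convexity of $W$ for $g_{u_1}=g_{u_2}$. Your closing remark that ``closed'' must be understood with respect to strong convergence in $V$ (equivalently, by convexity, weak $V$-closedness) is exactly how the paper implicitly uses the hypothesis.
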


\begin{rem}\label{rmk-bdd-minim-seq-no-PI}
The proof shows that if the minimizing sequence in the Dirichlet problem
can a priori be assumed to be bounded in $V$, then the assumption that
$\K_0$ is a Poincar\'e set can be omitted in 
Theorem~\ref{thm:dirichlet.problem}.
This observation was e.g.\ used in 
Bj\"orn--Bj\"orn~\cite[Theorem~5.13]{BBnonopen}  
to prove the existence of capacitary minimizers for arbitrary bounded condensers.
See Section~5 therein for examples and further discussion on the role of Poincar\'e 
inequalities in the Dirichlet problem.
\end{rem}

\begin{proof}
  Let $\{u_j\}_{j=1}^\infty\subseteq\K_f$ be a minimizing sequence, i.e. 
  \[
    \lim_{j\to\infty}\normW{g_j} = \inf_{v\in\K_f}\normW{g_v} =: I,
  \]
  where $g_j$ denotes the minimal gradient of $u_j$. Clearly,
  the sequence $\{g_j\}_{j=1}^\infty$ is bounded. Since $\K_0$ is a Poincar\'e
  set there exists a $C>0$ such that
  \begin{align*}
    \normV{u_j-f}\leq C\normW{g_{u_j-f}},
  \end{align*}
  and, by using \ref{gs:minus.gradient}, there
  exists a $g'\in W$ such that
  \begin{align*} 
    \normV{u_j-f}\leq C\normW{g_{u_j-f}}\leq C'(\normW{g_j}+\normW{g'}),
  \end{align*}  
  which implies that $\{u_j\}_{j=1}^\infty$ is also a bounded sequence. 
  From Lemma~\ref{lemma:bounded.seq.upper.gradient} 
  it follows that there exist
  convex combinations $\{\tilde{u}_i\}_{i=1}^\infty$ and
  $\{\tilde{g}_i\}_{i=1}^\infty$, converging to some functions
  $u$ and $g$,
  respectively, with $(u,g)\in R$. As
  $\K_0$ is convex we have 
 $\tilde{u}_i\in\K_f$ and,
  furthermore, since $\K_0$ is  closed, it follows
  that $u\in\K_f$. It remains to show that $g_u$ is indeed a minimizer, which is
  easily seen from
  \begin{align*}
    I\leq \normW{g_u}\leq \normW{g}=\lim_{i\to\infty}\normW{\tilde{g}_i}
    \leq I,
  \end{align*}
  as $\{g_i\}_{i=1}^\infty$ is assumed to be a minimizing sequence
  (and $\tilde{g}_i$,
given by Lemma~\ref{lemma:bounded.seq.upper.gradient},
are convex combinations of $g_i$, $g_{i+1}, \ldots$).

  Finally, let us prove that if $u_1$ and $u_2$ are two minimizers,
  then $g_{u_1} = g_{u_2}$. Since $\K_0$ is a convex set, the
  element $u=\tfrac{1}{2}(u_1+u_2)$ is in $\K_f$, and it has a (not
  necessarily minimal) gradient $\tfrac{1}{2}(g_{u_1}+g_{u_2})$. As
  $u\in\K_f$ we must have $\normW{\tfrac{1}{2}(g_{u_1}+g_{u_2})}\geq
  I$. But from the triangle inequality one obtains
  \begin{align*}
    I\leq\normW{\tfrac{1}{2}(g_{u_1}+g_{u_2})}\leq \tfrac{1}{2}\normW{g_{u_1}}
    +\tfrac{1}{2}\normW{g_{u_2}}=I,
  \end{align*}
  which implies that
  $\normW{\tfrac{1}{2}(g_{u_1}+g_{u_2})}=\normW{g_{u_1}}=\normW{g_{u_2}}$. Since
  $W$ is strictly convex, it follows that $g_{u_1}=g_{u_2}$.
\end{proof}

\noindent There is one important special case, where the uniqueness of
solutions can be assured. Namely, when the assignment $u\mapsto g_u$
is linear and $\K_0$ is a linear subspace (and not only a convex set). 

\begin{proposition}\label{prop:u.gu.linear.unique.solution}
  If the map $u\mapsto g_u$ is
  linear and $\K_0\subseteq\SobU$ 
is a nonempty closed linear subspace, which is
  also a Poincar\'e set, 
  then the Dirichlet problem with respect to $\K_f$ has a unique
  solution.
\end{proposition}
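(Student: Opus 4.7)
The plan is to obtain existence directly from Theorem~\ref{thm:dirichlet.problem}, and then to upgrade the conclusion $g_{u_1}=g_{u_2}$ (already provided by that theorem) to $u_1=u_2$ using the two extra hypotheses: linearity of $u\mapsto g_u$ and the fact that $\K_0$ is a linear subspace, not merely convex.

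First, I would note that $\K_0$ is nonempty, closed, convex (being a linear subspace) and a Poincar\'e set, so Theorem~\ref{thm:dirichlet.problem} applies and furnishes a solution $u_1\in\K_f$, together with the identity $g_{u_1}=g_{u_2}$ for any two solutions $u_1,u_2$.

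For uniqueness, suppose $u_1,u_2\in\K_f$ are both solutions. Since $u_1-f,u_2-f\in\K_0$ and $\K_0$ is a linear subspace, we have
\[
u_1-u_2=(u_1-f)-(u_2-f)\in\K_0.
\]
By linearity of $u\mapsto g_u$ together with $g_{u_1}=g_{u_2}$,
\[
g_{u_1-u_2}=g_{u_1}-g_{u_2}=0,
\]
so $(u_1-u_2,0)\in R$. As $\K_0$ is a Poincar\'e set, there exists $C>0$ such that
\[
\normV{u_1-u_2}\leq C\normW{0}=0,
\]
hence $u_1=u_2$, as required.

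The only step that requires any care is ensuring that the linearity of $u\mapsto g_u$ really gives $g_{u_1-u_2}=g_{u_1}-g_{u_2}$, which in turn uses that $-g_{u_2}$ is the minimal gradient of $-u_2$; this is already implicit in the meaning of ``the map $u\mapsto g_u$ is linear'' (combined with \ref{gs:minus.gradient}), so there is no genuine obstacle here. The role of the Poincar\'e inequality is essential: without it, a nonzero element of $\K_0$ could conceivably have zero as its minimal gradient, so uniqueness of the solution could fail even when uniqueness of the gradient holds.
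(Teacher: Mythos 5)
Your proposal is correct and follows essentially the same route as the paper's proof: existence from Theorem~\ref{thm:dirichlet.problem}, then uniqueness by combining $u_1-u_2\in\K_0$ (linear subspace), $g_{u_1-u_2}=0$ (linearity of $u\mapsto g_u$ plus $g_{u_1}=g_{u_2}$), and the Poincar\'e inequality. The extra remark about $-g_{u_2}$ being the minimal gradient of $-u_2$ is a fair point of care, but as you note it is subsumed in the stated linearity hypothesis, exactly as in the paper.
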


\begin{proof}
  By Theorem~\ref{thm:dirichlet.problem}, the Dirichlet
  problem with respect to $\K_f$ has at least one solution.
  Let $u_1$ and $u_2$ be two solutions.
  By Theorem~\ref{thm:dirichlet.problem}
  we know that $g_{u_1}=g_{u_2}$. By assumption, the map $u\mapsto g_u$ is
  linear, from which it follows that $g_{u_1-u_2} = 0$. As
  $\K_0$ is a linear subspace, we have
\begin{equation}   \label{eq-u1-u2-f-K0}
u_1-u_2=(u_1-f)-(u_2-f)\in\K_0.
\end{equation}
 Now, since $\K_0$ is a
  Poincar\'e set, it follows that
  \begin{align*}
    \normV{u_1-u_2}\leq C\normW{g_{u_1-u_2}} = 0,
  \end{align*}
  which implies that $u_1=u_2$.
\end{proof}

\begin{rem}\label{rem:poincare.sets}
In the classical situation, $\K_0$ is usually the zero Sobolev space, and thus a 
closed linear subspace. Here (but for Proposition~\ref{prop:u.gu.linear.unique.solution}) we have merely assumed $\K_0$ to be a closed convex set (with no extra cost, 
as this is enough for the proofs). 
An advantage of this approach is that we can formulate
the obstacle problem in the next section as a Dirichlet problem.
In the obstacle problem the class of competing functions is only convex also in the
classical situation, which traditionally means that it has to be handled separately.
Alternatively one can treat the Dirichlet problem as a special case of the obstacle 
problem, but the opposite direction 
(to treat the obstacle problem as a Dirichlet problem)
is not possible with traditional formulations of the problems.
\end{rem}

\begin{example} \label{ex-grad-max-1}
Let $V=\C$, $W=\R$ and define 
\begin{equation}   \label{eq-grad-max-Re-Im}
   R=\{(u,g) \in V \times W: g \ge \max(|{\Re u}|,|{\Im u}|)\}
   \quad \text{and} \quad
   \K_0=\{u \in V: \Re u \ge 0\}.
\end{equation}
Then $\U=(V,V,W,W,R)$ is a gradient space 
(note that we need an inequality, rather than equality, when
defining $R$, for property \ref{g:sum} to hold).
Moreover, $\K_0$ is a closed convex Poincar\'e set, and $1+ai$ is a solution
of the Dirichlet problem with respect to $\K_f$, with $f=1$,
for all $|a| \le 1$. Thus
we do not always have uniqueness in 
Theorem~\ref{thm:dirichlet.problem}.
(In Examples~\ref{ex-grad-max-1} and~\ref{ex-grad-max-2},
 $i$ denotes the imaginary unit.)
\end{example}

\begin{example} \label{ex-grad-max-2} 
Let $V=W^{1,2}(\R^2,\C)$ (the
  space of complex-valued $W^{1,2}$ functions on $\R^2$, cf.\
  Section~\ref{sect-unweighted-Rn}), $W=L^2(\R^2,\R)$ and define
\[
   R=\{(u,g) \in V \times W: g \ge \max(|\nabla\Re u|,|\nabla \Im u|) 
   \text{ a.e.\  in } \R^2 \},
\]
where $\nabla v$ is the distributional gradient of $v$.
It is easy to verify that  $\U=(V,V,W,W,R)$ is a gradient space 
(again we need an inequality when defining $R$),
and that $\Sob(\U)=W^{1,2}(\R^2,\C)$ 
(with an unorthodox gradient structure similar to the one in 
\eqref{eq-grad-max-Re-Im} and \eqref{eq-grad-struct-alt}). 

Let $\Om=\{x \in \R^2 : 1 < |x| < 2\}$, $\K_0=W^{1,2}_0(\Om,\C)$ 
 be the usual zero Sobolev space (which is a closed convex Poincar\'e set), 
and $f \in \Sob(\U)$ be such that $f(x)=1$ for $|x| \le 1$ and $f(x)=0$ for
$|x| \ge 2$.
By Theorem~\ref{thm:dirichlet.problem}, the Dirichlet problem with respect to
$\K_{f}$ has a solution $v\in\Sob(\U)$. 
Then $u_0:=\Re v$ is a real-valued solution of the same Dirichlet problem,
since every gradient of $v$ is also a gradient of $u_0$.
In fact, $u_0(x)=1-\log_2 |x|$ 
is the unique solution of the classical Dirichlet problem
for harmonic functions with the boundary data $f$.
Let $\phi:[0,1] \to [0,1]$ be a
continuously differentiable function 
such that $\phi(0)=\phi(1)=0$ and $|\phi'(t)| \le 1$
for $0 \le t \le 1$.
Also let $u(x)=u_0(x)+i\phi(u_0(x))$.
Then the minimal gradient of $u$ is
\[
   g_u(x)=\max(|\nabla u_0(x)|,|\phi'(u_0(x))|\,|\nabla u_0(x)|)=|\nabla u_0(x)|,
\]
and $u$ is also a solution of the Dirichlet problem with respect to $\K_f$,
which hence 
does not have a unique solution, even though the existence
of a solution is guaranteed by 
Theorem~\ref{thm:dirichlet.problem}.
\end{example}

\section{The obstacle problem}
\label{sec-obstacle-problem}

\noindent Let us now approach the obstacle problem for gradient spaces. That is,
we would like to solve the Dirichlet problem given the extra
constraint that the solution has to be larger than a given function (the
obstacle). So far, there is no concept of ordering in a
gradient space; hence, in the following we must assume that one may
compare elements of $V$. For this purpose let us recall the definition
of a linear preorder on a vector space.

\begin{definition} \label{deff-preorder}
  Let $\Vt$ be a vector space. A \emph{linear preorder} on $\Vt$ is a binary relation
  $\leq$ such that for $a,b,c\in \Vt$ it holds that
  \begin{enumerate}
  \item $a\leq a$,
  \item if $a\leq b$ and $b\leq c$ then $a\leq c$,
  \item  \label{item-linear-preorder}
    if $b\leq c$ then $a+b\leq a+c$,
  \item if $a\leq b$ and $\alpha\in[0,\infty)$ then $\alpha a\leq \alpha b$.
  \end{enumerate}
  We write $a<b$ if $a\leq b$ and $a\neq b$. The positive cone of $\Vt$,
  i.e.\ elements $a\in \Vt$ such that $a\geq 0$, will be denoted by
  $\Vt_+$.
\end{definition}

\noindent It follows that 
\begin{equation} \label{eq-b<=-a}
   a \le b   \equivalent -b = a-a-b \le b - a-b = -a.
\end{equation}
Note also that we do not assume antisymmetry, i.e.\
$a \le b \le a$ does not necessarily imply that $a=b$, cf.\
however Definition~\ref{def:ordered-grad-space}.

When introducing a linear preorder in a gradient space, it is natural
to assume that it is compatible with the notion of convergence.

\begin{definition}\label{def:preordered.upper.gradient.space}
  A \emph{preordered gradient space} is a gradient space
  $\U=(V,\Vt,W,\Wt,R)$ together with a linear preorder $\leq$ on $\Vt$ such that if
$V \ni u_i\leq\psi \in \Vt$ 
for $i=1,2,\ldots$, and $u_i\to u$ (in $V$), then $u\leq\psi$.
\end{definition}

\noindent
\emph{Assume for the rest of this section that $\U=(V,\Vt,W,\Wt,R)$
is a preordered gradient space.} 

\medskip

\noindent To formulate the obstacle problem for preordered gradient
spaces, we proceed in analogy with the Dirichlet problem: We
choose a subset $\K_0\subseteq\SobU$, an element $f\in\SobU$, an
obstacle $\psi\in \Vt$, and set
\begin{align*}
    \Kpsif = \{u\in\SobU: u-f\in\K_0\textrm{ and }u\geq\psi\}.  
\end{align*}
A solution to the obstacle problem with respect to $\K_{\psi,f}$ is
then given by an element $u\in\K_{\psi,f}$ such that
\[
  \normW{g_u} = \inf_{v\in\Kpsif}\normW{g_v},
\]
where $g_u$ and $g_v$ denote the minimal gradients of $u$ and
$v$, respectively.

To prove the existence of a solution to the obstacle problem, we will
reformulate the obstacle problem as a Dirichlet problem and use
Theorem~\ref{thm:dirichlet.problem} to conclude that a minimizer
exists. For this reason, we need the following result.

\begin{lemma}\label{lemma:tilde.Omega0}
  Let 
  $\K_0\subseteq\SobU$ be a closed convex Poincar\'e set. For
  $f\in\SobU$ and $\psi\in  \Vt$ the set
  \begin{align*}
    \Kt_0(\psi,f) = \{v\in\K_0:v+f\geq\psi\}
  \end{align*}
  is a closed convex Poincar\'e set.
\end{lemma}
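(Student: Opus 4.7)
The plan is to verify the three properties of $\Kt_0(\psi,f)$ one by one, noting that the Poincaré inequality is essentially free since $\Kt_0(\psi,f)\subseteq\K_0$, so the real content is in closedness and convexity.

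First, for convexity, I would take $v_1,v_2\in\Kt_0(\psi,f)$ and $t\in[0,1]$. Since $\K_0$ is convex, $tv_1+(1-t)v_2\in\K_0$. Using axioms (4) and (3) of Definition~\ref{deff-preorder}, I would scale $v_1+f\geq\psi$ by $t$ and $v_2+f\geq\psi$ by $1-t$ to obtain $t(v_1+f)\geq t\psi$ and $(1-t)(v_2+f)\geq(1-t)\psi$, and then add these two inequalities (again using (3)) to get $tv_1+(1-t)v_2+f\geq\psi$.

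Second, for closedness (in the $V$-norm topology, which is what is used in the proof of Theorem~\ref{thm:dirichlet.problem}), I would let $\{v_i\}\subseteq\Kt_0(\psi,f)$ with $v_i\to v$ in $V$. Closedness of $\K_0$ immediately gives $v\in\K_0$. To handle the inequality, I would like to apply the compatibility condition in Definition~\ref{def:preordered.upper.gradient.space}; however, that condition is phrased as a one-sided limit for upper bounds ($u_i\leq\psi$), while here we have lower bounds ($\psi\leq v_i+f$). This is the only subtlety: I would apply \eqref{eq-b<=-a} to rewrite $\psi\leq v_i+f$ as $-v_i-f\leq-\psi$, then apply the compatibility condition to the sequence $-v_i-f\to-v-f$ in $V$ to conclude $-v-f\leq-\psi$, and flip back via \eqref{eq-b<=-a} to obtain $v+f\geq\psi$. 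Hence $v\in\Kt_0(\psi,f)$.

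Finally, for the Poincaré property, since $\Kt_0(\psi,f)\subseteq\K_0$ and $\K_0$ is a Poincaré set with some constant $C$, the very same inequality $\|u\|_V\leq C\|g\|_W$ (for all $g\in W$ with $(u,g)\in R$) holds for every $u\in\Kt_0(\psi,f)$. The main obstacle is essentially a notational one: making sure the one-sided topological compatibility of the preorder is applied on the correct side via the reversal identity \eqref{eq-b<=-a}; everything else is a direct verification from the axioms.
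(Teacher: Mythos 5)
Your proof is correct and follows essentially the same route as the paper: convexity from the preorder axioms, closedness from the compatibility condition in Definition~\ref{def:preordered.upper.gradient.space}, and the Poincar\'e property inherited trivially from $\Kt_0(\psi,f)\subseteq\K_0$. Your extra care in reversing the inequality via \eqref{eq-b<=-a} to match the one-sided form of the compatibility condition is a detail the paper leaves implicit, but it is not a different argument.
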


\begin{proof}
  Since $\K_0$ is assumed to be closed, it follows from 
Definition~\ref{def:preordered.upper.gradient.space}
that $\Kt_0(\psi,f)$ is also a
  closed set. Moreover, as $\K_0$ is a convex set, it follows that
  $\Kt_0(\psi,f)$ is also convex (as the linear preorder
  respects sums and multiplication by positive real numbers). 
Finally, as
  $\Kt_0(\psi,f)\subseteq\K_0$, and $\K_0$ is assumed to be a
  Poincar\'e set, also $\Kt_0(\psi,f)$ is a Poincar\'e
  set. 
\end{proof}

\noindent
The existence of solutions to obstacle problems can now be
obtained directly from the Dirichlet problem.

\begin{theorem}\label{thm:obstacle.problem.solution}
  For any closed convex Poincar\'e set $\K_0\subset\SobU$, 
$f\in\SobU$ and
  $\psi\in \Vt$, the obstacle problem with respect to $\K_{\psi,f}$ 
  has at least one solution provided that
$\Kpsif\neq\emptyset$. 

If $u_1$ and $u_2$ are solutions, then $g_{u_1}=g_{u_2}$.
Moreover, if $\K_0$ is a linear subspace of $\SobU$ and
the map $u\mapsto g_u$ is linear, then $u_1=u_2$.
\end{theorem}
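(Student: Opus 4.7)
The plan is to reduce the obstacle problem to a Dirichlet problem over the auxiliary set $\Kt_0(\psi,f)$ supplied by Lemma~\ref{lemma:tilde.Omega0}, and then invoke Theorem~\ref{thm:dirichlet.problem}.

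First I would verify the set identity $\Kpsif=\Kt_0(\psi,f)+f$. If $u\in\Kpsif$, then $u-f\in\K_0$ and $(u-f)+f=u\geq\psi$, so $u-f\in\Kt_0(\psi,f)$; conversely, if $v\in\Kt_0(\psi,f)$, then $u:=v+f\in\SobU$ satisfies $u-f=v\in\K_0$ and $u=v+f\geq\psi$. In particular, the hypothesis $\Kpsif\neq\emptyset$ yields $\Kt_0(\psi,f)\neq\emptyset$, so Lemma~\ref{lemma:tilde.Omega0} shows that $\Kt_0(\psi,f)$ is a nonempty closed convex Poincar\'e set. Applying Theorem~\ref{thm:dirichlet.problem} with $\K_0$ replaced by $\Kt_0(\psi,f)$ and boundary datum $f$ produces some $u\in\Kt_0(\psi,f)+f=\Kpsif$ that minimizes $v\mapsto\normW{g_v}$ over this set, i.e.\ a solution of the obstacle problem. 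The assertion $g_{u_1}=g_{u_2}$ for two solutions is then inherited directly from the corresponding statement in Theorem~\ref{thm:dirichlet.problem}.

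For the uniqueness of $u$ under the additional hypotheses, one cannot simply quote Proposition~\ref{prop:u.gu.linear.unique.solution}, since $\Kt_0(\psi,f)$ is generally not a linear subspace even when $\K_0$ is (the constraint $v+f\geq\psi$ is only convex). However, I would mimic the argument of that proposition directly: by linearity of $u\mapsto g_u$,
\[
   g_{u_1-u_2}=g_{u_1}-g_{u_2}=0,
\]
and since $\K_0$ is a linear subspace, $u_1-u_2=(u_1-f)-(u_2-f)\in\K_0$. The Poincar\'e inequality applied in $\K_0$ then gives $\normV{u_1-u_2}\leq C\normW{g_{u_1-u_2}}=0$, so $u_1=u_2$.

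I do not anticipate a significant obstacle; the main structural work has already been carried out in Lemma~\ref{lemma:tilde.Omega0}, and the rest is bookkeeping. The only subtle point is recognizing that the uniqueness step must fall back on the Poincar\'e property of $\K_0$ itself, rather than trying to apply Proposition~\ref{prop:u.gu.linear.unique.solution} to the nonlinear set $\Kt_0(\psi,f)$.
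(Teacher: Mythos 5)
Your proposal is correct and follows essentially the same route as the paper: reduce to the Dirichlet problem over $\Kt_0(\psi,f)+f$ via Lemma~\ref{lemma:tilde.Omega0} and Theorem~\ref{thm:dirichlet.problem}, then handle uniqueness by noting $u_1-u_2\in\K_0$ and rerunning the argument of Proposition~\ref{prop:u.gu.linear.unique.solution} rather than citing it directly. The subtle point you flag (that $\Kt_0(\psi,f)$ is not a linear subspace) is precisely the one the paper addresses in the same way.
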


\begin{proof}
  In the notation of Lemma~\ref{lemma:tilde.Omega0}, the set
  $\Kpsif$ can be described as
  \begin{align*}
    \Kpsif = \{u\in\SobU: u-f\in\Kt_0(\psi,f)\} = \Kt_0(\psi,f) + f,
  \end{align*}
  and since, by the same lemma, the set $\Kt_0(\psi,f)$ is
  a closed convex Poincar\'e set, one can apply 
  Theorem~\ref{thm:dirichlet.problem} 
  to conclude that there exists at least
  one solution as long as $\Kpsif\neq\emptyset$, and
  that the minimal gradient of a solution is unique.

To prove the last part of the statement, we cannot use
Proposition~\ref{prop:u.gu.linear.unique.solution} directly, because
the Poincar\'e set $\Kt_0(\psi,f)$ is not a linear subspace.
However, since it follows from the definition of $\Kt_0(\psi,f)$
that \eqref{eq-u1-u2-f-K0} holds whenever $u_1, u_2 \in\Kpsif$,
the proof of Proposition~\ref{prop:u.gu.linear.unique.solution}
applies also in this case.
\end{proof}

\begin{rem}
It is easily verified that Lemma~\ref{lemma:tilde.Omega0} and 
Theorem~\ref{thm:obstacle.problem.solution} hold also for the
following multi-obstacle problem:
Given $f\in\SobU$ and (possibly uncountable) 
sets $\Psi,\Phi\subset \Vt$, let
\begin{align*}
    \K_{\Psi,\Phi,f} = \{u\in\SobU: u-f\in\K_0\textrm{ and } \psi\le u\leq\phi
\text{ for all } \psi\in\Psi \text{ and } \phi\in\Phi \},
\end{align*}
and find $u\in\K_{\Psi,\Phi,f}$ which minimizes $\normW{g_u}$ among all 
$u\in\K_{\Psi,\Phi,f}$.
Note that it is not always possible to replace $\Psi$ by
$\max_{\psi\in\Psi}\psi$ (or $\Phi$ by $\min_{\phi\in\Phi}\phi$), as the latter bounds
need not exist or need not be optimal with respect to the ordering $\le$.
See Section~\ref{sect-order} for a further discussion on this topic.

For the multi-obstacle problem we thus obtain the following result.
\end{rem}

\begin{theorem}     \label{thm-multi-obstacle}
 For any closed convex Poincar\'e set $\K_0$, 
sets $\Psi,\Phi\subset \Vt$ and $f\in\SobU$, the minimization problem
\[
  \normW{g_u} = \inf_{v\in\K_{\Psi,\Phi,f}}\normW{g_v}
\]
has at least one solution provided that $\K_{\Psi,\Phi,f} \ne \emptyset$. 

If $u_1$ and $u_2$ are solutions, then $g_{u_1}=g_{u_2}$.
Moreover, if $\K_0$ is a linear subspace of $\SobU$ and
the map $u\mapsto g_u$ is linear, then $u_1=u_2$.
\end{theorem}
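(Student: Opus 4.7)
The plan is to mimic the strategy used for Theorem~\ref{thm:obstacle.problem.solution} by reformulating the multi-obstacle problem as a Dirichlet problem. Concretely, I would introduce
\[
  \Kt_0(\Psi,\Phi,f) = \{v\in\K_0 : \psi \le v+f \le \phi \text{ for all } \psi\in\Psi,\ \phi\in\Phi\},
\]
so that $\K_{\Psi,\Phi,f} = \Kt_0(\Psi,\Phi,f)+f$. The central task is then to establish the analogue of Lemma~\ref{lemma:tilde.Omega0}, namely that $\Kt_0(\Psi,\Phi,f)$ is a closed convex Poincar\'e set whenever $\K_0$ is.

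First, for closedness, suppose $v_i \in \Kt_0(\Psi,\Phi,f)$ with $v_i\to v$ in $V$. Since $\K_0$ is closed, $v\in\K_0$. For each $\psi\in\Psi$ we have $\psi \le v_i+f$, which by \eqref{eq-b<=-a} is equivalent to $-(v_i+f) \le -\psi$. Since $-(v_i+f)\to -(v+f)$ in $V$, Definition~\ref{def:preordered.upper.gradient.space} gives $-(v+f)\le -\psi$ and hence $\psi\le v+f$. The upper constraints $v_i+f \le \phi$ pass to the limit directly by the same definition. Convexity of $\Kt_0(\Psi,\Phi,f)$ follows from convexity of $\K_0$ together with properties (3) and (4) of the linear preorder: if $v_1,v_2\in\Kt_0(\Psi,\Phi,f)$ and $t\in[0,1]$, then $t\psi \le t(v_1+f)$ and $(1-t)\psi \le (1-t)(v_2+f)$, and adding these gives $\psi\le tv_1+(1-t)v_2+f$ (and similarly for the upper bounds). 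Finally, the Poincar\'e property is inherited since $\Kt_0(\Psi,\Phi,f)\subseteq\K_0$.

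Once this lemma is in hand, existence and uniqueness of the minimal gradient follow immediately by applying Theorem~\ref{thm:dirichlet.problem} to the Dirichlet problem with respect to $\Kt_0(\Psi,\Phi,f)+f$. For the linear-subspace statement, I cannot apply Proposition~\ref{prop:u.gu.linear.unique.solution} directly since $\Kt_0(\Psi,\Phi,f)$ is generally not a subspace; however, exactly as in the proof of Theorem~\ref{thm:obstacle.problem.solution}, the key identity \eqref{eq-u1-u2-f-K0} still holds because $u_1-f$ and $u_2-f$ belong to $\K_0$, and linearity of $\K_0$ puts their difference back in $\K_0$. Then the Poincar\'e inequality on $\K_0$ combined with $g_{u_1-u_2} = g_{u_1}-g_{u_2}=0$ forces $u_1=u_2$.

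The only mildly delicate step is verifying closedness and convexity of $\Kt_0(\Psi,\Phi,f)$ when $\Psi$ and $\Phi$ are uncountable, but since both properties are preserved under arbitrary intersections of the corresponding one-obstacle sets, no cardinality issue actually arises; the main conceptual obstacle is simply recognizing that the upper-bound constraints $v+f\le\phi$ are handled by an appeal to \eqref{eq-b<=-a}, reducing them to the lower-bound form built into Definition~\ref{def:preordered.upper.gradient.space}.
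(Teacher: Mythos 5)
Your proposal is correct and follows essentially the same route as the paper, which gives no separate proof but simply asserts (in the remark preceding the theorem) that Lemma~\ref{lemma:tilde.Omega0} and Theorem~\ref{thm:obstacle.problem.solution} carry over to the multi-obstacle class; your explicit verification that $\Kt_0(\Psi,\Phi,f)$ is closed (handling the lower constraints via \eqref{eq-b<=-a} so that Definition~\ref{def:preordered.upper.gradient.space} applies), convex, and a Poincar\'e set, followed by Theorem~\ref{thm:dirichlet.problem} and the \eqref{eq-u1-u2-f-K0} argument for uniqueness, is exactly the intended verification.
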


\noindent The assumption that $\Kpsif$ is nonempty in 
Theorem~\ref{thm:obstacle.problem.solution} can be rephrased in the
following way.

\begin{proposition}\label{prop:K.nonemtpy.criterion}
  Let $\K_0\subseteq\SobU$, $f\in\SobU$ and $\psi\in \Vt$. Then
  $\Kpsif\neq\emptyset$ if and only if there exists $v\in\K_0$ such
  that $v \ge \psi-f$.
\end{proposition}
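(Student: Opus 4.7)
The proposition is really just a translation of the defining conditions of $\Kpsif$ under the substitution $v = u - f$, combined with the fact that the preorder is compatible with addition, so my plan is essentially to unwind the definitions cleanly in both directions.

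For the forward direction, I would assume $\Kpsif \neq \emptyset$ and pick $u \in \Kpsif$. By definition this means $u \in \SobU$, $v := u - f \in \K_0$, and $u \geq \psi$. Rewriting $u = v + f$ and applying property (\ref{item-linear-preorder}) of Definition~\ref{deff-preorder} with the element $-f$ added to both sides of $v + f \geq \psi$, I obtain $v \geq \psi - f$. This is the element of $\K_0$ required.

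For the converse, suppose $v \in \K_0$ satisfies $v \geq \psi - f$. Set $u := v + f$. Since $v \in \K_0 \subseteq \SobU$ and $f \in \SobU$, and $\SobU$ is a vector space by Lemma~\ref{lem-Sob-vector-space}, we have $u \in \SobU$. Also $u - f = v \in \K_0$ by construction. Adding $f$ to both sides of $v \geq \psi - f$ (again using property (\ref{item-linear-preorder}) of the linear preorder) yields $u = v + f \geq \psi$. Hence $u \in \Kpsif$, so $\Kpsif \neq \emptyset$.

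There is no real obstacle here; the argument is purely formal. The only point that needs a brief mention is that the linear preorder allows shifting by $\pm f$ on both sides of an inequality, which is exactly condition (\ref{item-linear-preorder}) in Definition~\ref{deff-preorder}. Note that we do not need the closedness, convexity, or Poincar\'e property of $\K_0$, nor any relation between the gradient structure and the preorder, for this equivalence.
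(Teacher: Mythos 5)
Your proof is correct and follows essentially the same argument as the paper: both directions proceed by the substitution $v=u-f$ (resp.\ $u=v+f$) and shifting the inequality by $\pm f$ via the compatibility of the linear preorder with addition. The only cosmetic difference is that you invoke Lemma~\ref{lem-Sob-vector-space} to get $u\in\SobU$, while the paper cites \ref{g:sum} directly; this changes nothing of substance.
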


\begin{proof}
  First, assume that there is $u\in\Kpsif$. By
  setting $v=u-f$ it follows that $v\in\K_0$ and $v=u-f\geq\psi-f$
  since $u\geq\psi$. 

Conversely,
assume that there exists 
  $v\in\K_0$ such that $v\geq\psi-f$. Defining $u=v+f$ it follows
  that $u\in\SobU$ (since $\K_0\subseteq\SobU$ and $f\in\SobU$,
  using \ref{g:sum}), $u-f=   v\in\K_0$ and $u=v+f\geq
  \psi-f+f=\psi$. Hence, $u\in\Kpsif$.
\end{proof}

\begin{rem}
The space $\Vt$ in a gradient space $\U=\paraa{V,\Vt,W,\Wt,R}$
played a prominent role of a preordered space in this section, 
and will also play a vital role in Section~\ref{sect-order}.
It however, does not  play
any role in Sections~\ref{subsect-DP} and~\ref{sec:rayleigh.quotient}.
On the other hand, the space $\Wt$ does not play any direct role in this paper,
but we have chosen to still include it for symmetry reasons and
possible later applications.
\end{rem}

\section{Minimizing the Rayleigh quotient}
\label{sec:rayleigh.quotient}

\noindent
\emph{Assume in  this section that $\U=(V,\Vt,W,\Wt,R)$
is a gradient space.}

\medskip

\noindent In this section we consider another variational problem
that can be treated using gradient spaces.  Weak eigenfunctions of the
Laplace operator can be found by minimizing the \emph{Rayleigh
  quotient} 
\begin{align*}
  \Ray(u)=\frac{\norm{\nabla u}_{L^2}}{\norm{u}_{L^2}},
\end{align*}
whose infimum gives the first (apart from 0) eigenvalue of the Laplace
operator. If there exists a Poincar\'e inequality on the set over
which the infimum is computed, i.e.\ if
\begin{align*}
  \norm{u}_{L^2}\leq C\norm{\nabla u}_{L^2}
\end{align*}
for some $C>0$, then it is clear that a positive infimum
exists. To guarantee the existence of a minimizer, one can use the
fact that the Sobolev space $W^{1,2}$ is compactly embedded into $L^2$ to
find a sequence converging to a minimizer. Therefore, in the setting of
gradient spaces, we shall minimize over sets which satisfy a version
of the Rellich--Kondrachov theorem.

\begin{definition}
  Let  $\K\subseteq\SobU$ be a
    cone, i.e.\ if $u\in\K$ then $\alpha u\in\K$ for all
    $\alpha>0$. The cone $\K$ is a \emph{Rellich--Kondrachov cone} if for
  every bounded sequence $\{u_i\}_{i=1}^\infty\subseteq\K$, such that
  $\{g_{u_i}\}_{i=1}^\infty$ is also a bounded sequence, there exists
  a convergent subsequence $\{u_{i_k}\}_{k=1}^\infty$ which converges
(in $V$) 
  to an element $u\in \K$. Moreover, a Rellich--Kondrachov cone is
  \emph{regular} if, for $u\in\K$, $g_u=0$ implies that $u=0$.
\end{definition}

\noindent 
It follows directly from the definition that Rellich--Kondrachov cones
are closed subsets of $\SobU$.

Note that the regularity assumption is necessary for any set to be a
Poincar\'e set. Furthermore, it is sufficient to guarantee that a
Rellich--Kondrachov cone supports a Poincar\'e inequality.

\begin{proposition}\label{prop:rk.set.poincare}
  A regular Rellich--Kondrachov cone is a Poincar\'e set.
\end{proposition}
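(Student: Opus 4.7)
The plan is to argue by contradiction, using the cone structure to normalize a sequence that violates the Poincaré inequality, and then extract a limit via the Rellich--Kondrachov property whose existence contradicts regularity.

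First I would suppose, for contradiction, that there is no constant $C>0$ such that $\normV{u}\le C\normW{g}$ holds for every $u\in\K$ and every gradient $g$ of $u$. Since $g_u$ is a gradient of $u$ with minimal norm, it suffices to negate the corresponding inequality with $g=g_u$: there exist $u_n\in\K$ with $\normV{u_n}>n\normW{g_{u_n}}$ for each $n$. In particular, $u_n\neq 0$, so we may set $v_n:=u_n/\normV{u_n}$. Because $\K$ is a cone and $1/\normV{u_n}>0$, we have $v_n\in\K$, and $\normV{v_n}=1$. Using the scaling property of the minimal gradient (established just after Theorem~\ref{thm:DpU.minimal.gradient}), namely $g_{\alpha u}=\alpha g_u$ for $\alpha\ge 0$, we obtain $g_{v_n}=g_{u_n}/\normV{u_n}$, so that
\[
\normW{g_{v_n}}=\frac{\normW{g_{u_n}}}{\normV{u_n}}<\frac{1}{n}.
\]

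Next I would invoke the Rellich--Kondrachov property. The sequence $\{v_n\}$ is bounded in $V$ (with norm one), and $\{g_{v_n}\}$ is bounded in $W$ (tending to zero). Hence some subsequence $\{v_{n_k}\}$ converges in $V$ to a limit $v\in\K$. Since $\normV{v_{n_k}}=1$ for all $k$, we have $\normV{v}=1$, and in particular $v\neq 0$.

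Finally I would combine this with property~\ref{gs:upper.gradient.limit}: from $(v_{n_k},g_{v_{n_k}})\in R$, $v_{n_k}\to v$ in $V$, and $g_{v_{n_k}}\to 0$ in $W$, we conclude $(v,0)\in R$. Thus $0$ is a gradient of $v$, which forces $g_v=0$. Regularity of the Rellich--Kondrachov cone then yields $v=0$, contradicting $\normV{v}=1$. The main potential pitfall is just verifying the identity $g_{v_n}=g_{u_n}/\normV{u_n}$ and checking that the contradiction is drawn from the correct limiting relation in $R$; neither step is really an obstacle given the machinery already set up.
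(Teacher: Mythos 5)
Your proof is correct and follows essentially the same route as the paper's: normalize a sequence violating the Poincar\'e inequality using the cone structure and the scaling $g_{\alpha u}=\alpha g_u$, extract a convergent subsequence via the Rellich--Kondrachov property, and use \ref{gs:upper.gradient.limit} to get $(v,0)\in R$ with $\normV{v}=1$, contradicting regularity. The extra care you take in reducing the Poincar\'e inequality for all gradients to the case of the minimal gradient is a nice touch, but the argument is otherwise the same as in the paper.
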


\begin{proof}
  Let us assume that the regular Rellich--Kondrachov cone $\K$ is not
  a Poincar\'e set, i.e.\ there is no $C>0$ such that
  \[
    \normV{u}\leq C\normW{g_u}
  \]
  for all $u\in\K$. We will now show that, under this assumption, one
  may construct an element $u\in\K$ with $u\neq 0$ and $g_u=0$, which
  contradicts the fact that $\K$ is regular. If there is no Poincar\'e
  inequality, then one may find a sequence $\{\tilde{u}_k\}_{k=1}^\infty$ 
  in $\K$
such that 
  \begin{align*}
    \normV{\tilde{u}_k}\geq k\normW{g_{\tilde{u}_k}}.
  \end{align*}
  Since (for $\alpha>0$) it holds that
  \begin{align*}
    \normV{\alpha\tilde{u}_k}\geq k\normW{\alpha g_{\tilde{u}_k}}
    = k\normW{g_{\alpha\tilde{u}_k}}
  \end{align*}
  one may construct a rescaled sequence
  $u_k=\tilde{u}_k/\normV{\tilde{u}_k}$ 
with the same property. Thus,
  one obtains a sequence $\{u_k\}_{k=1}^\infty$ in $\K$ (as $\K$ is a cone)
with $\normV{u_k}=1$ and
  $\normV{u_k}\geq k\normW{g_{u_k}}$, which implies that
  \begin{align*}
    \normW{g_{u_k}}\leq\frac{1}{k}.
  \end{align*}
  Thus, $g_{u_k}\to 0$ which, in particular, implies that
  $\{g_{u_k}\}_{k=1}^\infty$ is bounded. Now, since both sequences 
  $\{u_k\}_{k=1}^\infty$ and
  $\{g_{u_k}\}_{k=1}^\infty$ are bounded, and $\K$ is a
  Rellich--Kondrachov cone, we conclude that there exists a convergent
  subsequence $\{u_{i_k}\}_{k=1}^\infty$, converging to some $u\in\K$. Clearly,
  $\normV{u}=1$ which implies that $u\neq 0$. Moreover, 
  $g_{u_{i_k}} \to 0$, and from
  \ref{gs:upper.gradient.limit} it follows that $(u,0)\in R$, which
  contradicts the fact that $\K$ is assumed to be regular. Hence,
  $\K$ is a Poincar\'e set. 
\end{proof}

\begin{theorem}  \label{thm-solve-rayleigh}
  Let $\K\subseteq\SobU$ be a
  nonempty regular Rellich--Kondrachov cone. Then there exists an
  element $u\in\K$ such that
  \begin{align*}
    \frac{\normW{g_u}}{\normV{u}} = \inf_{\substack{v\in\K\\v\neq 0}}\frac{\normW{g_v}}{\normV{v}}>0.
  \end{align*}
\end{theorem}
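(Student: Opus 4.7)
The plan is to combine Proposition~\ref{prop:rk.set.poincare} (to see that the infimum is strictly positive) with a direct minimizing-sequence argument (to produce the minimizer), exploiting the cone property to normalize and the Rellich--Kondrachov property to extract a strongly convergent subsequence in $V$.

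First, set $I:=\inf_{v\in\K\setm\{0\}} \normW{g_v}/\normV{v}$. By Proposition~\ref{prop:rk.set.poincare}, $\K$ is a Poincar\'e set, so there is $C>0$ with $\normV{v}\le C\normW{g_v}$ for all $v\in\K$; hence $I\ge 1/C>0$. Next, pick a minimizing sequence $\{\tilde u_k\}\subseteq\K\setm\{0\}$. Using the observation following Theorem~\ref{thm:DpU.minimal.gradient} that $g_{\alpha u}=\alpha g_u$ for $\alpha>0$, and the cone property, the rescaled elements $u_k:=\tilde u_k/\normV{\tilde u_k}$ still lie in $\K$, still form a minimizing sequence, and satisfy $\normV{u_k}=1$ together with $\normW{g_{u_k}}\to I$. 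In particular, both $\{u_k\}$ and $\{g_{u_k}\}$ are bounded.

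Since $\K$ is a Rellich--Kondrachov cone, a subsequence (which I shall still denote $\{u_k\}$) converges in $V$ to some $u\in\K$, and clearly $\normV{u}=1$, so $u\ne 0$. I would now apply Lemma~\ref{lemma:bounded.seq.upper.gradient} to the bounded pairs $(u_k,g_{u_k})\in R$ to obtain convex combinations
\[
  \tilde u_i=\sum_{j=i}^{N_i}\alpha_{ij}u_j, \qquad
  \tilde g_i=\sum_{j=i}^{N_i}\alpha_{ij}g_{u_j},
\]
with $\tilde u_i\to u'$ in $V$, $\tilde g_i\to g'$ in $W$, and $(u',g')\in R$. Since $u_k\to u$ strongly in $V$, every convex combination of tail elements also converges to $u$; therefore $u'=u$ and so $u\in\SobU$ with $g'$ a gradient of $u$.

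It remains to compare norms. From the convex combination structure and the triangle inequality,
\[
  \normW{\tilde g_i}\le\sum_{j=i}^{N_i}\alpha_{ij}\normW{g_{u_j}}
  \le\sup_{j\ge i}\normW{g_{u_j}} \longrightarrow I,
\]
so $\normW{g'}=\lim_i\normW{\tilde g_i}\le I$. By the definition of the minimal gradient, $\normW{g_u}\le\normW{g'}\le I$. Combined with $\normV{u}=1$ and the defining inequality $\normW{g_u}/\normV{u}\ge I$, this forces equality, producing the desired minimizer. The main (mild) technical point is justifying that the convex-combination limit from Lemma~\ref{lemma:bounded.seq.upper.gradient} coincides with the strong limit $u$ supplied by the Rellich--Kondrachov property; this is immediate because strong convergence of $\{u_k\}$ passes to any convex combination of its tails.
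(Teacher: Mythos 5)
Your proposal is correct and follows essentially the same route as the paper: Proposition~\ref{prop:rk.set.poincare} gives $I>0$, the cone property and $g_{\alpha u}=\alpha g_u$ allow normalization to $\normV{u_k}=1$, the Rellich--Kondrachov property yields a strong limit $u\in\K$ with $\normV{u}=1$, and Lemma~\ref{lemma:bounded.seq.upper.gradient} applied to the tails produces a gradient $g'$ of $u$ with $\normW{g'}\le I$, forcing $\normW{g_u}=I$. Your explicit justification that the convex-combination limit coincides with the strong limit $u$ is exactly the point the paper uses implicitly, so there is nothing to add.
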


\begin{proof}
  For any $u\in\SobU$, such that $u\neq 0$, let
  \begin{align*}
    \Ray(u) = \frac{\normW{g_u}}{\normV{u}}.
  \end{align*}
  For $\alpha>0$ it follows that
  \begin{align}\label{eq:rayleigh.scaling}
    \Ray(\alpha u)=\frac{\normW{g_{\alpha u}}}{\normV{\alpha u}}
    =\frac{\normW{\alpha g_u}}{\normV{\alpha u}}=\frac{\normW{g_u}}{\normV{u}}=\Ray(u).
  \end{align}
  Since $\K$ is a Poincar\'e set (by 
  Proposition~\ref{prop:rk.set.poincare}), 
  there exists $C>0$ such that $\Ray(v)\geq
  C$ for all $v\in\K$, which implies that the infimum
  \begin{align*}
    I = \inf_{\substack{v\in\K\\v\neq 0}}\Ray(v)
  \end{align*}
  is positive. Let $\{\tilde{u}_i\}_{i=1}^\infty$ be a minimizing sequence, i.e.\ a
  sequence such that
  \begin{align*}
    I = \lim_{i\to\infty}\Ray(\tilde{u}_i).
  \end{align*}
  The normalized sequence given by
  $u_i=\tilde{u}_i/\normV{\tilde{u}_i}$ 
clearly fulfills $\Ray(u_i)\geq
  I$ (by the definition of $I$ as the infimum), and from
  \eqref{eq:rayleigh.scaling} one obtains
  \begin{align*}
    I\leq \Ray(u_i)= \Ray(\tilde{u}_i),
  \end{align*}
  which implies that $\lim_{i\to\infty} \Ray(u_i)=I$, i.e.\ $\{u_i\}_{i=1}^\infty$ is 
also a
  minimizing sequence. Moreover, it follows that $\{g_{u_i}\}_{i=1}^\infty$ is a
  bounded sequence since 
  \begin{align*}
    I=\lim_{i\to\infty}\Ray(u_i)=\lim_{i\to\infty}\normW{g_{u_i}}.
  \end{align*}
  As $\K$ is a Rellich--Kondrachov cone one can find
  a convergent subsequence $\{u_{i_k}\}_{k=1}^\infty$, converging to some
  $u\in\K$, which must have $\normV{u}=1$. From the two bounded
  sequences $\{u_{i_k}\}_{k=1}^\infty$ and $\{g_{u_{i_k}}\}_{k=1}^\infty$ 
  one can use
  Lemma~\ref{lemma:bounded.seq.upper.gradient} to construct
  convex combinations $\{\hat{u}_i\}_{i=1}^\infty$ and 
  $\{\hat{g}_i\}_{i=1}^\infty$ which
  converge to $u$ and $g$, respectively, with $(u,g)\in R$. Since 
$\hat{g}_k$,
given by Lemma~\ref{lemma:bounded.seq.upper.gradient},
are convex combinations of 
$g_{u_{i_k}}$, $g_{u_{i_{k+1}}}, \ldots$,
we get 
  \begin{align*}
    \normW{g} =\lim_{i \to \infty}\normW{\hat{g}_i} 
    \leq \limsup_{k\to\infty} \|g_{u_{i_k}}\|_W = I.
  \end{align*}
  It follows  that
  \begin{align*}
    I\leq \Ray(u)=\frac{\normW{g_u}}{\normV{u}} = \normW{g_{u}}\leq
    \normW{g}\leq I
  \end{align*}
  which shows that $\Ray(u)=I$.
\end{proof}

\section{Least upper bounds and lattice structures}
\label{sect-order}

\noindent For two real-valued functions,
one may define their least upper bound 
by a pointwise choice of the
maximum of the two function values. However, a pointwise construction
is not available in the setting of (preordered) gradient spaces, and we shall
investigate to what extent one may define an upper bound with the
help of a minimization problem.
The idea behind this approach is that the pointwise maximum of
two nonnegative functions $f$ and $g$ minimizes the $L^p$-norm among all
functions $h$ such that $h\ge f$ and $h\ge g$.

More generally, for two positive elements $\psi_1,\psi_2\in \Vt$, 
the aim will be to find an
element $u\in V$ such that $u\geq\psi_1$, $u\geq\psi_2$ and 
\begin{align*}
  \normV{u} = \inf_{v}\normV{v},
\end{align*}
where the infimum is taken over all $v\in V$ such that $v\geq\psi_1$
and $v\geq\psi_2$. Such an upper bound will not necessarily be a least
upper bound, with respect to the linear ordering, but is only an upper
bound with minimal norm. In fact, requiring  that the above
construction yields a least upper bound in general leads to severe
restrictions on the underlying spaces in the case of noncommutative
algebras.

To reach the desired results, we have to refine our notion of ordering
in a gradient space. Namely, we shall assume that the norm is
compatible with the ordering in the following sense.

\begin{definition} \label{def:ordered-grad-space}
  An \emph{ordered gradient space} $\U=(V,\Vt,W,\Wt,R)$ is a
  preordered gradient space such that $V$ is a strictly
  convex Banach space and for $u,v\in V$ we have
  \begin{equation}  \label{eq-ordered}
    0\leq u \le v\implies \normV{u} \le \normV{v}.
  \end{equation}
\end{definition}

\noindent
\emph{Assume for the rest of this section that $\U=(V,\Vt,W,\Wt,R)$
is an ordered gradient space.}

\medskip

\noindent 
It follows that the preorder $\le$  will be a partial
order on $V$, i.e.\ if $u,v \in V$,  $u\leq v$ and $v\leq u$, then $u=v$.
Indeed, if $u,v \in V$,  $u\leq v$ and $v\leq u$, then  using 
\eqref{item-linear-preorder} in Definition~\ref{deff-preorder}
shows that
\[
    0 = u-u \le v-u \le u-u =0,
\]
and hence by \eqref{eq-ordered}, $\normV{v-u} \le \normV{0} =0$,
so $u=v$.

\begin{example} \label{ex-CR}
Note, however, that it does not follow that $\le$ is a partial order on $\Vt$.
To see this, let $V=C(\R)$ with sup-norm, 
\[
    \Vt=\{f : f=h \text{ a.e.\ for some } h \in V\}
\]
and let $u \le v$ if $u \le v$ pointwise a.e.\ as functions
(where a.e.\ refers to the Lebesgue measure).  Then
$\le$ is a partial order on $V$ but only a preorder on $\Vt$.  One can
clearly choose 
 $W$, $\Wt$ and $R$ (in many ways) so as to make it
into an ordered gradient space.
\end{example}

In fact, for the results in this section, the spaces $W$ and $\Wt$, as
well as the gradient relation $R$, will not play any role, it is only
$V$ and $\Vt$ and the conditions imposed on them that will be involved
here.  However, the discussion is still important to better
  understand the
concept of ordered gradient spaces.

The following observation will be of use to us.

\begin{lemma} \label{lem-strict}
Let $u,v \in V$. 
Then 
  \[
    0\leq u<v\implies \normV{u}<\normV{v}.
  \]
\end{lemma}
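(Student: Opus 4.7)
The plan is to prove the strict inequality by contradiction, combining the ordering property \eqref{eq-ordered} with the strict convexity of $V$ applied to the midpoint of $u$ and $v$.

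First, I would establish the two-sided sandwich
\[
    0 \le u \le \tfrac{1}{2}(u+v) \le v.
\]
The left inequality $0 \le (u+v)/2$ follows from $u\ge 0$ and $v \ge u \ge 0$ together with items~(3) and (4) in Definition~\ref{deff-preorder}. The inequality $u \le (u+v)/2$ is equivalent, after multiplying by $2 > 0$, to $2u \le u+v$, which follows from $u \le v$ by adding $u$ to both sides. Analogously $(u+v)/2 \le v$. Applying \eqref{eq-ordered} twice to this chain then gives
\[
    \normV{u} \le \normV{\tfrac{1}{2}(u+v)} \le \normV{v}.
\]

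Now suppose for contradiction that $\normV{u} = \normV{v} =: r$. Since $u < v$ we have $u \ne v$, hence $r > 0$ (otherwise $u = 0 = v$). The display above forces $\normV{(u+v)/2} = r$ as well. But $V$ is strictly convex, and strict convexity applied to the distinct elements $u/r$ and $v/r$ on the unit sphere yields
\[
    \normV{\tfrac{1}{2}(u/r + v/r)} < 1,
\]
i.e.\ $\normV{(u+v)/2} < r$, a contradiction. Therefore $\normV{u} < \normV{v}$.

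The only mildly subtle step is verifying the midpoint sandwich from the preorder axioms (which requires carefully using the $\alpha \ge 0$ scalar-multiplication rule rather than any subtraction); the rest is a direct application of \eqref{eq-ordered} and the definition of strict convexity, both of which are hypotheses of an ordered gradient space.
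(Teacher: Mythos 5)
Your proof is correct and follows essentially the same route as the paper: compare $u$ and $v$ with the midpoint $\tfrac12(u+v)$, apply \eqref{eq-ordered} twice, and use strict convexity of $V$ to rule out equality of the norms. The only differences are cosmetic — you spell out the preorder manipulations for the midpoint sandwich and handle the degenerate case $\normV{u}=\normV{v}=0$ explicitly before normalizing, details the paper leaves implicit.
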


\begin{proof}
Assume that $0 \le u <v$.
By \eqref{eq-ordered} we know that $\normV{u} \le \normV{v}$.
Assume that $\normV{u}=\normV{v}$ and let $w=\frac{1}{2}(u+v)$.
Then $u < w < v$ and thus, by \eqref{eq-ordered} again,
$
  \normV{u}\le \normV{w}  \le \normV{v}$.
Hence $\normV{u} = \normV{v}= \normV{w} =\normV{\tfrac{1}{2}(u+v)}$,
and since $V$ is strictly convex  $u=v$, which contradicts
that $u < v$.
Therefore $\normV{u} <\normV{v}$.
\end{proof}

\noindent We are now ready to solve the minimization problem which
constructs an upper bound 
of two elements in $\Vt$ with least possible norm.

\begin{proposition}\label{prop:existence.maximum}
For $\psi_1,\psi_2\in \Vt$ set 
\[
\Om= \Omega(\psi_1,\psi_2)=\{u\in V:
  u\geq\psi_1\textrm{ and }u\geq\psi_2\}.
\]
If $\Omega\neq\emptyset$
  then there exists a unique element $u\in V$ such that
  \begin{align*}
    \normV{u} = \inf_{v\in\Omega}\normV{v}.
  \end{align*}
\end{proposition}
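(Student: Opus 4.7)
The plan is to minimize the norm of $v$ over the set $\Omega$ by following the standard direct-method recipe: show $\Omega$ is closed and convex, extract a weakly convergent subsequence from a minimizing sequence (using reflexivity of $V$), pass to convex combinations via Corollary~\ref{cor:weak.convex.combination} to obtain strong convergence, and then use strict convexity of $V$ to establish uniqueness.

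First I would verify that $\Omega$ is convex and closed. Convexity is immediate from properties (3) and (4) of Definition~\ref{deff-preorder}: if $u_1,u_2\in\Omega$ and $t\in[0,1]$, then $t\psi_i\le tu_1$ and $(1-t)\psi_i\le (1-t)u_2$, so adding gives $\psi_i \le tu_1+(1-t)u_2$. For closedness, suppose $\Omega\ni u_k\to u$ in $V$. Since $u_k\ge\psi_i$, applying \eqref{eq-b<=-a} we get $-u_k\le -\psi_i$; as $-u_k\to -u$ in $V$ and $-\psi_i\in\Vt$, Definition~\ref{def:preordered.upper.gradient.space} yields $-u\le-\psi_i$, hence $u\ge\psi_i$ by \eqref{eq-b<=-a} again. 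Thus $u\in\Omega$.

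Next I would carry out the existence argument. Let $I=\inf_{v\in\Omega}\normV{v}$ (which is finite since $\Omega\ne\emptyset$) and let $\{u_j\}_{j=1}^\infty\subseteq\Omega$ be a minimizing sequence, so $\normV{u_j}\to I$. In particular $\{u_j\}$ is bounded in $V$, and since $V$ is reflexive by \ref{gs:V.reflexive}, Banach--Alaoglu gives a weakly convergent subsequence with some weak limit $u\in V$. Applying Corollary~\ref{cor:weak.convex.combination} produces convex combinations $\tilde{u}_i=\sum_{j=i}^{N_i}\alpha_{ij}u_j$ converging strongly to $u$. Convexity of $\Omega$ gives $\tilde{u}_i\in\Omega$, and closedness then forces $u\in\Omega$. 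Moreover,
\begin{align*}
  I\le \normV{u}=\lim_{i\to\infty}\normV{\tilde{u}_i}
    \le \limsup_{i\to\infty}\sum_{j=i}^{N_i}\alpha_{ij}\normV{u_j}=I,
\end{align*}
so $\normV{u}=I$.

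For uniqueness, suppose $u_1,u_2\in\Omega$ both achieve the infimum. As in the verification of convexity above, $w:=\tfrac{1}{2}(u_1+u_2)\in\Omega$, so $\normV{w}\ge I$. On the other hand, the triangle inequality gives $\normV{w}\le\tfrac{1}{2}(\normV{u_1}+\normV{u_2})=I$. Hence $\normV{\tfrac{1}{2}(u_1+u_2)}=\normV{u_1}=\normV{u_2}$, and strict convexity of $V$ (guaranteed by Definition~\ref{def:ordered-grad-space}) forces $u_1=u_2$ (the case $I=0$ is trivial since then $u_1=u_2=0$). The main subtlety is handling the lower-bound closedness, which I expect to be routine once \eqref{eq-b<=-a} is invoked; everything else is a direct application of the machinery already set up in Section~\ref{sect-grad-spcs}.
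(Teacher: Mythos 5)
Your proposal is correct and follows essentially the same route as the paper: a minimizing sequence, convex combinations via Corollary~\ref{cor:weak.convex.combination} converging strongly to a limit that stays in $\Omega$ (using the closedness property in Definition~\ref{def:preordered.upper.gradient.space} together with \eqref{eq-b<=-a}), the norm estimate through the convex combinations, and uniqueness from strict convexity of $V$. You are merely a bit more explicit than the paper about the Banach--Alaoglu extraction and the closedness/convexity of $\Omega$, which is fine.
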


\begin{proof}
  Let $\{u_i\}_{i=1}^\infty$ be a minimizing sequence in $\Omega$ with
  \begin{align*}
    \lim_{i\to\infty}\normV{u_i} = \inf_{v\in\Omega}\normV{v} =: I.
  \end{align*}
  Since $\{u_i\}_{i=1}^\infty$ is bounded there exists, by
  Corollary~\ref{cor:weak.convex.combination}, a 
sequence
  $\tilde{u}_i= \sum_{j=i}^{N_i} \alp_{ij} u_j$ 
   converging to some $u$.
It follows directly that 
 $\tilde{u}_i\geq\psi_1$ and $\tilde{u}_i\geq\psi_2$ for
  $i=1,2,\ldots$. From the definition of a preordered gradient space,
  it follows that $u\geq\psi_1$ and $u\geq\psi_2$, which implies that
  $u\in\Omega$. Furthermore, 
  since $\ut_i$ is a convex combination of $\{u_j\}_{j=i}^\infty$,
  it is clear that
  \begin{align*}
    I\leq\normV{u} = \lim_{i\to\infty}\normV{\tilde{u}_i} \leq
    \limsup_{i\to\infty}\normV{u_i} = I,
  \end{align*}
  which proves that there exists a minimizer in $\Omega$. 

  Next, let us
  prove uniqueness. Assume that $u_1$ and $u_2$ are two minimizers.
  In particular,  $\normV{u_1}=\normV{u_2}=I$. 
  Since $\frac{1}{2}(u_1+u_2)\geq\psi_1$ and
  $\frac{1}{2}(u_1+u_2)\geq \psi_2$, 
  we get that $\frac{1}{2}(u_1+u_2)\in\Omega$ and thus
   $I\leq\normV{\frac{1}{2}(u_1+u_2)}$. From the triangle inequality one
  obtains
  \begin{align*}
    I\leq\normV{\tfrac{1}{2}\paraa{u_1+u_2}}\leq\tfrac{1}{2}\normV{u_1}
    +\tfrac{1}{2}\normV{u_2} = I,
  \end{align*}
  which implies that
  $I=\normV{u_1}=\normV{u_2}=\normV{\frac{1}{2}(u_1+u_2)}$. As $V$ is
  assumed to be strictly convex, it follows that $u_1=u_2$.
\end{proof}

\begin{rem} \label{rem-multi-obst}
Proposition~\ref{prop:existence.maximum} and its proof can also be regarded
as a special case of the multi-obstacle problem.
For this, one defines a new gradient relation 
$R=\{(u,u):u \in V\}$ with $\Wt=\Vt$ and $W=V$.
The existence and uniqueness 
then follow directly from Theorem~\ref{thm-multi-obstacle}.
Note that the minimizing sequence is bounded in this case and thus 
Remark~\ref{rmk-bdd-minim-seq-no-PI} applies here.
In any case, $V$ is automatically
a Poincar\'e set with the above choice of $R$.
\end{rem}

\noindent 
Thus, a necessary and sufficient condition for such a minimizer to
exist is that there exists at least one upper bound of $\psi_1$ and
$\psi_2$ in $ V$. Therefore, we introduce the following subset of $\Vt$. 

\begin{definition}
The
  \emph{$V$-bounded positive cone} is given as
  \begin{align*}
    \Bplus = \{\psi\in \Vt: 0 \le \psi \le u \text{ for some }u\in V\}.
  \end{align*}
We also define $\Vplus=V \cap \Bplus= \{v\in V: v\geq 0\}$.
\end{definition}

\noindent 
Note that $\Vplus$ is closed, which follows immediately from
  the assumption that $\U$ is a preordered gradient space
  (cf.\ Definition~\ref{def:preordered.upper.gradient.space}).

If $\psi_1,\psi_2\in\Bplus$ then there exist $u_1,u_2\in V$ such
that $\psi_i\leq u_i$ for $i=1,2$. In particular, it follows that
$u_1+u_2\geq\psi_1+\psi_2\geq\psi_i$ for $i=1,2$.
Hence, there exists a unique solution to the minimization problem in
Proposition~\ref{prop:existence.maximum} for any $\psi_1,\psi_2\in\Bplus$.

Consequently, we can introduce the maximum of two $V$-bounded elements.

\begin{definition}\label{def:maximum}
Let $\psi_1,\psi_2\in\Bplus$.  The unique minimizer in 
  Proposition~\ref{prop:existence.maximum} 
  is called the \emph{maximum of $\psi_1$
    and $\psi_2$} and is denoted 
$\max(\psi_1,\psi_2)$.
\end{definition}

\noindent Note that 
$\max(\psi_1,\psi_2)\in \Vplus$ since $\max(\psi_1,\psi_2)\in V$ and
$\max(\psi_1,\psi_2)\geq\psi_1\geq 0$.
(We do not define $\max(\psi_1,\psi_2)$ unless $\psi_1,\psi_2 \in
\Bplus$.)
However, $\max(\psi_1,\psi_2)$ is not necessarily a least
upper bound with respect to the ordering on $V$ 
(see the example in Section~\ref{sect-matrix-alg}).
Nevertheless, it enjoys the following properties.

\begin{lemma}\label{lemma:max.properties}
Let $\psi_1,\psi_2\in\Bplus$ and $u\in V$. 
Then the following are true\/\textup{:}
 \begin{enumerate}
  \renewcommand{\theenumi}{\textup{(\arabic{enumi})}}%
  \renewcommand{\labelenumi}{\theenumi}%
  \item if $u\geq \psi_1$, then $\max(u,\psi_1)=u$,\label{max.prop.comparable}
  \item if $u\geq\psi_1$, $u\geq\psi_2$ and
    $u\neq\max(\psi_1,\psi_2)$, then
    $\normV{u}>\normV{\max(\psi_1,\psi_2)}$.\label{max.prop.norm.ineq}
  \end{enumerate}
\end{lemma}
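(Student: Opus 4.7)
The plan is to derive both claims directly from the existence and uniqueness in Proposition~\ref{prop:existence.maximum} together with the monotonicity axiom~\eqref{eq-ordered} of the ordered gradient space. First, I would note that $\max(u,\psi_1)$ in part~(1) actually makes sense: since $\psi_1\in\Bplus$ gives $\psi_1\ge 0$, the hypothesis $u\ge\psi_1$ together with transitivity of $\le$ shows $u\ge 0$, so $u\in\Vplus\subseteq\Bplus$, which is exactly what Definition~\ref{def:maximum} requires.

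For part~(1), the plan is to identify $u$ itself as the unique norm-minimizer of the set $\Omega(u,\psi_1)=\{v\in V:v\ge u\text{ and }v\ge \psi_1\}$. Because $u\ge\psi_1$, this set reduces to $\{v\in V:v\ge u\}$ and in particular contains $u$, so Proposition~\ref{prop:existence.maximum} applies. For any other competitor $v\ge u$ one has $0\le u\le v$, and \eqref{eq-ordered} yields $\normV{u}\le \normV{v}$. Thus $u$ is a minimizer, and the uniqueness clause of Proposition~\ref{prop:existence.maximum} forces $\max(u,\psi_1)=u$.

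For part~(2), write $M=\max(\psi_1,\psi_2)$. The hypothesis places $u$ in $\Omega(\psi_1,\psi_2)$, so by the defining property of $M$ one has $\normV{M}\le \normV{u}$. If equality held, then $u$ would also be a norm-minimizer on $\Omega(\psi_1,\psi_2)$, and the uniqueness in Proposition~\ref{prop:existence.maximum} would force $u=M$, contradicting the hypothesis $u\ne M$. Hence $\normV{u}>\normV{M}$. The argument is entirely formal and no real obstacle arises; in particular Lemma~\ref{lem-strict} is not needed here, since the strict inequality comes from uniqueness of the minimizer rather than from strict convexity applied to some convex combination.
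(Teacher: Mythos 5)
Your proof is correct and follows essentially the same route as the paper: in both parts you identify $u$ as a competitor in the relevant set $\Omega$ and conclude via the monotonicity property \eqref{eq-ordered} together with the uniqueness of the minimizer in Proposition~\ref{prop:existence.maximum}. Your preliminary remark that $u\ge\psi_1\ge 0$ puts $u\in\Vplus\subseteq\Bplus$, so that $\max(u,\psi_1)$ is even defined, is a point the paper leaves implicit, but it does not change the argument.
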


\begin{proof}
  \ref{max.prop.comparable} Assume that $u\geq\psi_1$.
Then $u \in \Om(u, \psi_1)$. 
By \eqref{eq-ordered}, $\normV{v}\ge\normV{u}$ for $v \in  \Om(u, \psi_1)$,
and hence
$u$ has
  minimal norm in $ \Om(u, \psi_1)$. Thus, $\max(u,\psi_1)=u$.

  \ref{max.prop.norm.ineq} Assume that $u\geq\psi_1$ and
  $u\geq\psi_2$. As $u\in\Omega(\psi_1,\psi_2)$, 
and $\normV{\max(\psi_1,\psi_2)}$
  is the infimum of $\normV{v}$ among
  $v\in\Omega(\psi_1,\psi_2)$, 
we must have 
  $\normV{u}\geq\normV{\max(\psi_1,\psi_2)}$. Moreover, since the
  minimizer is unique, we must have
  $\normV{u}>\normV{\max(\psi_1,\psi_2)}$ whenever
  $u\neq\max(\psi_1,\psi_2)$.
\end{proof}

\begin{rem}
Both for the proof of Proposition~\ref{prop:existence.maximum} 
and the one outlined in Remark~\ref{rem-multi-obst},
it is enough to require that $\U$ is a preordered gradient space with the
additional requirement that $V$ be strictly convex.
However, for $u \in \Vplus$ it is natural to require that
$\max(u,u)=u$, 
and this is true if and only if $\U$ is an ordered gradient space,
which can be seen as follows:
If there are $u$ and $v$ such that $0 \le u \le v$ but 
$\normV{u} > \normV{v}$, then $u$ cannot be the solution of the
minimization problem for $\max(u,u)$, and thus $\max(u,u) \ne u$.
The converse direction follows from 
Lemma~\ref{lemma:max.properties}\,\ref{max.prop.comparable}.
\end{rem}

\noindent The least upper bound property of $\max(\psi_1,\psi_2)$ is
problematic, since an upper bound $v$ of $\psi_1$ and $\psi_2$ need not be
comparable to $\max(\psi_1,\psi_2)$ 
(see the example in Section~\ref{sect-matrix-alg}).
Here we need to be a bit more precise. If $\psi_1, \psi_2 \in \Bplus$,
then they may have many upper bounds in $\Bplus \setm \Vplus$.
For our purposes we will however (mainly)
be interested in upper bounds that
belong to $\Vplus$, and their minimality within this class. 
In order to clarify this, we make the following definition.

\begin{definition}
An upper bound $v \in \Vplus$ of $\psi_1,\psi_2 \in \Bplus$ 
is a \emph{$\Vplus$-least upper bound}  of $\psi_1$ and $\psi_2$
if $v \le u$ for any
upper bound $u \in \Vplus$  of $\psi_1$ and $\psi_2$.

\emph{$\Vplus$-greatest lower bounds} are defined analogously.
\end{definition}

Since $\le$ is a partial order on $V$ it follows that
if a $\Vplus$-least upper bound of $\psi_1$ and $\psi_2$ exists
then it is unique. 
Similar uniqueness holds
for $\Vplus$-greatest lower bounds.

\begin{proposition}\label{prop:lub.comparable}
  If $v\in V$ is such that $v\geq\psi_1\in\Bplus$, $v\geq\psi_2\in\Bplus$ and $v$ is
  comparable to $\max(\psi_1,\psi_2)$, then $v\geq\max(\psi_1,\psi_2)$.
In particular, if $\psi_1$ and $\psi_2$ have a $\Vplus$-least 
upper bound $v$, then $v=\max(\psi_1,\psi_2)$.
\end{proposition}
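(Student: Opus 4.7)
Write $m:=\max(\psi_1,\psi_2)$ for brevity. The hypothesis that $v$ is comparable to $m$ means that either $m\le v$ or $v\le m$. In the first case there is nothing to prove, so the entire content of the claim lies in ruling out the possibility that $v\le m$ with $v\ne m$. My plan is to derive a contradiction from $v<m$ by combining two facts already available in the paper: the norm-minimality of $m$ in $\Omega(\psi_1,\psi_2)$, and the strict monotonicity of $\normV{\cdot}$ on the positive cone given by Lemma~\ref{lem-strict}.

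Concretely, suppose $v\le m$. Since $v\ge\psi_1\ge 0$, we have $0\le v\le m$, so $v\in\Vplus$ and, by \eqref{eq-ordered}, $\normV{v}\le\normV{m}$. On the other hand $v\ge\psi_1$ and $v\ge\psi_2$, so $v\in\Omega(\psi_1,\psi_2)$, and the defining property of $m=\max(\psi_1,\psi_2)$ from Proposition~\ref{prop:existence.maximum} gives $\normV{v}\ge\normV{m}$. Hence $\normV{v}=\normV{m}$. If in addition $v\ne m$, then $0\le v<m$ and Lemma~\ref{lem-strict} yields $\normV{v}<\normV{m}$, a contradiction. Therefore $v=m$, and in particular $v\ge m$.

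For the ``in particular'' statement, suppose $v\in\Vplus$ is a $\Vplus$-least upper bound of $\psi_1,\psi_2$. Since $m\in\Vplus$ is itself an upper bound of $\psi_1$ and $\psi_2$ in $\Vplus$, the definition of $\Vplus$-least upper bound gives $v\le m$. Thus $v$ is comparable to $m$, and the first half of the proposition gives $v\ge m$. The partial-order property of $\le$ on $V$ (established right after Definition~\ref{def:ordered-grad-space}) then forces $v=m$.

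I do not expect a genuine obstacle; the only subtle points are (i) noticing that $v\ge\psi_1\ge 0$ is what allows Lemma~\ref{lem-strict} to be invoked (the strict monotonicity is only asserted on the positive cone), and (ii) using the antisymmetry of $\le$ on $V$ in the last step, which was a nontrivial consequence of strict convexity rather than an outright assumption.
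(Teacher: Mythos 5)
Your proof is correct and follows essentially the same route as the paper: ruling out $v<\max(\psi_1,\psi_2)$ by combining Lemma~\ref{lem-strict} with the norm-minimality of $\max(\psi_1,\psi_2)$ in $\Omega(\psi_1,\psi_2)$, and then using leastness together with antisymmetry of $\le$ on $V$ for the final identity. Your explicit observation that $v\le\max(\psi_1,\psi_2)$ (hence comparability) already follows from the $\Vplus$-least upper bound hypothesis is a nice clarification of a step the paper leaves implicit.
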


\begin{proof}
  If $v$ is comparable to $\max(\psi_1,\psi_2)$ then, by definition,
  either $v\geq\max(\psi_1,\psi_2)$ or $v<\max(\psi_1,\psi_2)$. Since
  $v\in\Omega(\psi_1,\psi_2)$, the statement $v<\max(\psi_1,\psi_2)$, which implies that
  $\normV{v}<\normV{\max(\psi_1,\psi_2)}$ (by Lemma~\ref{lem-strict}),
  contradicts the fact that
  $\max(\psi_1,\psi_2)$ minimizes the norm in $\Omega(\psi_1,\psi_2)$. Hence, 
  $v\geq\max(\psi_1,\psi_2)$.  

If, in addition, $v$ is a
  $\Vplus$-least upper bound of $\psi_1$ and $\psi_2$, then one cannot have
  $v>\max(\psi_1,\psi_2)$ (since that would contradict the assumption
  that $v$ is a \emph{least} upper bound), and it follows that
  $v=\max(\psi_1,\psi_2)$.
\end{proof}

\noindent Let us now introduce another variational problem in order to
define the minimum of two elements $\psi_1,\psi_2\in\Bplus$. The idea is to
minimize the (norm)-distance to elements that are lower bounds
of $\psi_1$ and $\psi_2$. However, as $\psi_1$ and $\psi_2$ are not
necessarily elements of $V$ (and, in particular, $\normV{\psi_1-u}$
need not be defined for $u\in V$) we shall instead minimize the
distance to $\max(\psi_1,\psi_2)\in V$.   

\begin{proposition}\label{prop:min.via.minimization}
For $\psi_1,\psi_2\in\Bplus$
we set
\[
\Omega'=\Om'(\psi_1,\psi_2)=\{v\in V: 0 \le  v\leq\psi_1\textrm{ and
  }v\leq\psi_2\} \subset \Vplus.
\] 
  Then there exists a unique element $u\in\Omega'$ such that
  \begin{align*}
    \normV{\max(\psi_1,\psi_2)-u} = \inf_{v\in\Omega'}\normV{\max(\psi_1,\psi_2)-v}.
  \end{align*}  
\end{proposition}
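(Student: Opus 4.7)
The plan is to mimic the existence and uniqueness arguments already used for the maximum in Proposition~\ref{prop:existence.maximum} and for the Dirichlet problem in Theorem~\ref{thm:dirichlet.problem}: take a minimizing sequence, exploit the reflexivity of $V$ via Mazur's lemma (Corollary~\ref{cor:weak.convex.combination}) to pass to strongly convergent convex combinations, and use the strict convexity of $V$ to conclude uniqueness. First I would observe that $\Om'$ is nonempty, since $0\in\Om'$ as $\psi_1,\psi_2\in\Bplus$ forces $\psi_1,\psi_2\geq 0$. Convexity of $\Om'$ is immediate from parts (3) and (4) of Definition~\ref{deff-preorder}. Closedness of $\Om'$ in $V$ follows by combining the closure property built into Definition~\ref{def:preordered.upper.gradient.space} applied to the inequalities $v\leq\psi_i$, $i=1,2$, with the fact that $\Vplus$ is closed in $V$ (noted right after the definition of $\Vplus$).

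For existence, set $M=\max(\psi_1,\psi_2)$ and let $I=\inf_{v\in\Om'}\normV{M-v}$. Choose a minimizing sequence $\{v_i\}_{i=1}^{\infty}\subseteq\Om'$. Since $\normV{v_i}\leq\normV{M-v_i}+\normV{M}$ is bounded, reflexivity of $V$ gives, via Banach--Alaoglu together with Corollary~\ref{cor:weak.convex.combination}, convex combinations $\tilde v_i=\sum_{j=i}^{N_i}\alpha_{ij}v_j$ which converge strongly in $V$ to some $u$. Convexity and closedness of $\Om'$ force $u\in\Om'$. Since
\[
  \normV{M-\tilde v_i} = \biggl\|\sum_{j=i}^{N_i}\alpha_{ij}(M-v_j)\biggr\|_V \le \sum_{j=i}^{N_i}\alpha_{ij}\normV{M-v_j},
\]
and each summand on the right can be made arbitrarily close to $I$ once $i$ is large enough (because $j\geq i$), we obtain $\normV{M-u}=\lim_{i\to\infty}\normV{M-\tilde v_i}\leq I$, and the opposite inequality holds because $u\in\Om'$.

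For uniqueness, if $u_1,u_2\in\Om'$ are both minimizers, convexity of $\Om'$ gives $\tfrac12(u_1+u_2)\in\Om'$, and then
\[
  I \le \normV{M-\tfrac12(u_1+u_2)} = \normV{\tfrac12(M-u_1)+\tfrac12(M-u_2)} \le \tfrac12\normV{M-u_1}+\tfrac12\normV{M-u_2} = I,
\]
so strict convexity of $V$ forces $M-u_1=M-u_2$, that is $u_1=u_2$ (the case $I=0$ being trivial). The only genuinely nontrivial step is the verification that $\Om'$ is closed, which relies on the compatibility of the preorder with convergence built into the notion of a preordered gradient space; beyond this, the proof simply reuses the by now familiar Banach--Alaoglu plus Mazur's lemma pattern.
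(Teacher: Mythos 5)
Your proposal is correct and follows essentially the same route as the paper: minimizing sequence, Banach--Alaoglu plus Mazur's lemma (Corollary~\ref{cor:weak.convex.combination}) to get strongly convergent convex combinations, closedness/convexity of $\Om'$ via the preorder's compatibility with convergence, and strict convexity of $V$ for uniqueness. The only cosmetic difference is that you bound the minimizing sequence by the triangle inequality with $M=\max(\psi_1,\psi_2)$, whereas the paper uses $0\le u_i\le\psi_1\in\Bplus$ together with \eqref{eq-ordered}; both are valid.
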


\begin{proof}
  The set $\Omega'$ contains the element $0$ and is therefore always
  nonempty. Set $M=\max(\psi_1,\psi_2)$ and let
  $\{u_i\}_{i=1}^\infty$ be a minimizing sequence in $\Omega'$, i.e.
  \begin{align*}
    \lim_{i\to\infty}\normV{M-u_i} = \inf_{v\in\Omega'}\normV{M-v}.
  \end{align*}
  As $\{u_i\}_{i=1}^\infty$ is a bounded sequence (due to $0\leq
  u_i\leq\psi_1\in\Bplus$), there exists (by
  Corollary~\ref{cor:weak.convex.combination}) a sequence $\{\tilde{u}_i\}_{i=1}^\infty$
converging to some
  $u$, where each $\tilde{u}_i$ is a convex combination of 
  $\{u_j\}_{j=i}^\infty$.
  It follows from the
  definition of a preordered gradient space that
  $u\in\Omega'$. Moreover, 
  \begin{align*}
    I\leq\normV{M-u}=\lim_{i\to\infty}\normV{M-\ut_i}
    \leq\lim_{i\to\infty}\normV{M-u_i} = I,
  \end{align*}
  which shows that $u$ is indeed a minimizer. 

  Next, let $u_1$ and $u_2$ be
  two minimizers fulfilling
  $\normV{M-u_1}=\normV{M-u_2}=I$. Since $u_1,u_2\in\Omega'$
  it is clear that $\frac{1}{2}(u_1+u_2)\in\Omega'$, and one obtains
  \begin{align*}
    I\leq \normV{M-\tfrac{1}{2}(u_1+u_2)}\leq 
    \tfrac{1}{2}\normV{M-u_1}+\tfrac{1}{2}\normV{M-u_2}=I,
  \end{align*}
  which shows that
  \begin{align*}
    \normV{M-u_1}=\normV{M-u_2}=
    \tfrac{1}{2}\normV{M-u_1+M-u_2}.
  \end{align*}
  As $V$ is assumed to be strictly convex it follows that $u_1=u_2$.
\end{proof}

\noindent Let us now make the following definition.

\begin{definition}
For $\psi_1,\psi_2\in\Bplus$
  we define $\min(\psi_1,\psi_2)$ to be the (unique) minimizer in
  Proposition~\ref{prop:min.via.minimization}.
\end{definition}

\noindent 
Our next result shows that the existence of $\Vplus$-least upper bounds implies
the existence of $\Vplus$-greatest lower bounds and thus
that $\Vplus$ is a lattice.
Recall that a partially ordered set $(A,\leq)$ is a \emph{lattice} if every
pair of elements has a greatest lower bound and a least
upper bound in $A$.

\begin{proposition}\label{prop:Vplus.lattice}
If 
there exists a $\Vplus$-least upper bound\/ 
\textup{(}which then must be unique\/\textup{)}
for all pairs $u_1,u_2\in
  \Vplus$,
then $\Vplus$ is a lattice,
and 
moreover   $\max(u_1,u_2)$ is the $\Vplus$-least upper bound of $u_1$ and $u_2$,
and $\min(u_1,u_2)$ is the $\Vplus$-greatest lower bound of $u_1$ and $u_2$.
\end{proposition}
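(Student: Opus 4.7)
The plan is to split the proposition into two parts. The first---that $\max(u_1,u_2)$ is the $\Vplus$-least upper bound---is immediate from Proposition~\ref{prop:lub.comparable}: the hypothesis furnishes a $\Vplus$-least upper bound, and the last sentence of that proposition identifies it with $\max(u_1,u_2)$. The real content therefore lies in producing a $\Vplus$-greatest lower bound of a given pair $u_1,u_2\in\Vplus$ and matching it with $\min(u_1,u_2)$.

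My candidate is $G:=M-N$, where $M:=\max(u_1,u_2)$ and $N$ is the $\Vplus$-least upper bound (provided by the hypothesis) of the pair $M-u_1,M-u_2$. This pair lies in $\Vplus$: both elements belong to $V$, and $M\ge u_i\ge 0$ yields $M-u_i\ge 0$. Moreover, $M$ is itself a $\Vplus$-upper bound of $M-u_1,M-u_2$, so minimality of $N$ gives $N\le M$, whence $G\in\Vplus$. The inequalities $N\ge M-u_i$ rearrange to $G\le u_i$, so $G$ is a lower bound. To see it is the greatest one in $\Vplus$, let $w\in\Vplus$ satisfy $w\le u_1$ and $w\le u_2$. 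Then $w\le M$ gives $M-w\in\Vplus$, and $w\le u_i$ gives $M-w\ge M-u_i$; hence $M-w$ is a $\Vplus$-upper bound of $M-u_1,M-u_2$, and minimality of $N$ yields $N\le M-w$, i.e.\ $w\le G$.

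Finally, I would identify $G$ with $\min(u_1,u_2)$ by invoking the defining property in Proposition~\ref{prop:min.via.minimization}. Every $v\in\Omega'(u_1,u_2)$ belongs to $\Vplus$ and is a lower bound of $u_1,u_2$, so by the previous step $v\le G$. Consequently $0\le M-G\le M-v$, and the ordering axiom~\eqref{eq-ordered} gives $\normV{M-G}\le\normV{M-v}$ for every such $v$. Hence $G$ attains the infimum, and the uniqueness of the minimizer in Proposition~\ref{prop:min.via.minimization} forces $G=\min(u_1,u_2)$. The main obstacle I anticipate is simply choosing the right candidate for the greatest lower bound: the hypothesis only speaks of least upper bounds, so one must \emph{reflect} the infimum problem through $M$ (a De~Morgan-type trick) to bring it within the scope of the assumption; once $G=M-N$ is on the table, everything else is routine bookkeeping with the preorder.
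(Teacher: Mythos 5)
Your proof is correct, but it takes a genuinely different route from the paper's. For the greatest lower bound, the paper works directly with the norm-defined element $\min(u_1,u_2)$: given an arbitrary lower bound $v\in\Vplus$ of $u_1,u_2$, it forms $\hat v=\max(v,\min(u_1,u_2))$, uses the hypothesis (via Proposition~\ref{prop:lub.comparable}, applied to the auxiliary pair $v,\min(u_1,u_2)$) to get $\hat v\le u_i$, and then the chain $0\le\max(u_1,u_2)-\hat v\le\max(u_1,u_2)-\min(u_1,u_2)$ together with the \emph{uniqueness} of the minimizer in Proposition~\ref{prop:min.via.minimization} forces $\hat v=\min(u_1,u_2)\ge v$. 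You instead apply the hypothesis to the reflected pair $M-u_1,M-u_2$ (with $M=\max(u_1,u_2)$) and produce the greatest lower bound abstractly as $G=M-N$; only afterwards do you identify $G$ with $\min(u_1,u_2)$ by checking $G\in\Om'(u_1,u_2)$, noting every $v\in\Om'$ satisfies $v\le G$, hence $\normV{M-G}\le\normV{M-v}$ by \eqref{eq-ordered}, and invoking uniqueness in Proposition~\ref{prop:min.via.minimization}. All the order manipulations you use ($M-u_i\in\Vplus$, $N\le M$, the rearrangements via \eqref{eq-b<=-a}) are legitimate in a preordered vector space, so the argument is sound. What your reflection trick buys is a clean separation of concerns: the lattice property of $\Vplus$ is obtained by pure order-theoretic reasoning from the existence of least upper bounds alone, with the norm (strict convexity, minimizer uniqueness) entering only in the final identification of the glb with $\min(u_1,u_2)$; the paper's argument is a bit shorter, avoids the auxiliary element $N$, and leans on the uniqueness of the minimizer as the engine of the whole proof.
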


\begin{proof}
First note that $\max(u_1,u_2)$ is the $\Vplus$-least upper bound of $u_1$ and $u_2$,
by Proposition~\ref{prop:lub.comparable}.
  Let $v\in\Vplus$ be a lower bound of $u_1$ and $u_2$ and set
  \begin{align*}
    \hat{v} = \max(v,\min(u_1,u_2)).
  \end{align*}
  Since $u_1$ and $u_2$ are upper bounds for both $v$ and
  $\min(u_1,u_2)$, we have 
$\hat{v}\leq u_i$ for $i=1,2$
  as $\hat{v}$ is the least such upper bound (by
  Proposition~\ref{prop:lub.comparable} again). 
Hence $\max(u_1,u_2)-\hat{v}\geq 0$.
Using \eqref{eq-b<=-a} and $\hat{v}\geq\min(u_1,u_2)$, we then conclude that
\begin{align*}
    0\leq \max(u_1,u_2)-\hat{v}\leq \max(u_1,u_2)-\min(u_1,u_2),
  \end{align*}
from which it follows that 
  \begin{align*}
    \normV{\max(u_1,u_2)-\hat{v}}\leq\normV{\max(u_1,u_2)-\min(u_1,u_2)},
  \end{align*}
  as $\U$ is an ordered gradient space. Since $\min(u_1,u_2)$ is
  the unique minimizer of the above norm 
(by  Proposition~\ref{prop:min.via.minimization}), we must have
  $\hat{v}=\min(u_1,u_2)$. Hence, 
  \begin{align*}
    \min(u_1,u_2) = \max(v,\min(u_1,u_2)),
  \end{align*}
  which implies that $\min(u_1,u_2)\geq v$.
Thus $u_1$ and $u_2$ have a $\Vplus$-greatest lower bound which equals
$\min(u_1,u_2)$, and hence $\Vplus$ is a lattice.
\end{proof}

\noindent We can now obtain the following characterization.

\begin{theorem} \label{thm-char-lattice}
The following are equivalent\/\textup{:}
  \begin{enumerate}
  \renewcommand{\theenumi}{\textup{(\alph{enumi})}}%
  \renewcommand{\labelenumi}{\theenumi}%
  \item \label{i-lub}
  there is a $\Vplus$-least upper bound   
    for all $u_1, u_2 \in \Vplus$,
  \item \label{i-max}
  $\max(u_1,u_2)$ is the $\Vplus$-least upper bound  
    for all $u_1, u_2 \in \Vplus$,
  \item \label{i-lattice}
    $\Vplus$ is a lattice,
  \item \label{i-glb}
  there is a $\Vplus$-greatest lower bound  
    for all $u_1, u_2 \in \Vplus$,
  \item \label{i-min}
  $\min(u_1,u_2)$ is the $\Vplus$-greatest lower bound 
    for all $u_1, u_2 \in \Vplus$.
  \end{enumerate}
\end{theorem}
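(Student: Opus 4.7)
The plan is to observe that most of the implications are already essentially contained in earlier results. From Proposition~\ref{prop:lub.comparable} I get (a) $\Rightarrow$ (b), while Proposition~\ref{prop:Vplus.lattice} directly gives (a) $\Rightarrow$ (c) and (a) $\Rightarrow$ (e). The implications (b) $\Rightarrow$ (a) and (e) $\Rightarrow$ (d) are immediate, and (c) $\Rightarrow$ (a) together with (c) $\Rightarrow$ (d) follow from the definition of a lattice. What remains is to close the cycle, and the cleanest route I see is (d) $\Rightarrow$ (e) $\Rightarrow$ (b).

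For (d) $\Rightarrow$ (e), I fix $u_1,u_2 \in \Vplus$ and let $w$ be their $\Vplus$-greatest lower bound. Since $\min(u_1,u_2) \in \Om'(u_1,u_2) \subset \Vplus$ is itself a lower bound of $u_1,u_2$, we have $\min(u_1,u_2) \le w$. Combined with $w \le u_i \le \max(u_1,u_2)$, applying \eqref{eq-b<=-a} and adding $\max(u_1,u_2)$ to both sides yields
\[
   0 \le \max(u_1,u_2) - w \le \max(u_1,u_2) - \min(u_1,u_2).
\]
Then \eqref{eq-ordered} gives $\normV{\max(u_1,u_2)-w} \le \normV{\max(u_1,u_2)-\min(u_1,u_2)}$, and since $w \in \Om'(u_1,u_2)$, the uniqueness in Proposition~\ref{prop:min.via.minimization} forces $w = \min(u_1,u_2)$.

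For (e) $\Rightarrow$ (b), given any upper bound $v \in \Vplus$ of $u_1,u_2$, I set $\hat v = \min(v,\max(u_1,u_2))$, which by (e) is the $\Vplus$-greatest lower bound of $v$ and $\max(u_1,u_2)$. Since $u_1$ and $u_2$ are lower bounds for this pair, one gets $u_i \le \hat v \le \max(u_1,u_2)$. Now $0 \le \hat v \le \max(u_1,u_2)$ together with \eqref{eq-ordered} gives $\normV{\hat v} \le \normV{\max(u_1,u_2)}$, whereas Lemma~\ref{lemma:max.properties}\ref{max.prop.norm.ineq} would yield strict inequality if $\hat v \ne \max(u_1,u_2)$. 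This contradiction forces $\hat v = \max(u_1,u_2)$, hence $\max(u_1,u_2) \le v$, proving that $\max(u_1,u_2)$ is the $\Vplus$-least upper bound.

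The main obstacle, though essentially already surmounted in the proof of Proposition~\ref{prop:Vplus.lattice}, is the asymmetry between $\max$ (defined by minimizing $\normV{\cdot}$) and $\min$ (defined by minimizing $\normV{\max(\psi_1,\psi_2)-\cdot}$), which blocks any purely symmetric duality argument. The workaround in both steps above is the same: use \eqref{eq-ordered} to transport an order inequality into a norm inequality, and then invoke the appropriate uniqueness statement (Proposition~\ref{prop:existence.maximum} or~\ref{prop:min.via.minimization}, together with Lemma~\ref{lemma:max.properties}) to upgrade it to equality.
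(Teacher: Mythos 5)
Your proof is correct, and all the order-theoretic details check out: in (d) $\Rightarrow$ (e) the hypothesized greatest lower bound $w$ lies in $\Om'(u_1,u_2)$ and satisfies $0\le \max(u_1,u_2)-w\le \max(u_1,u_2)-\min(u_1,u_2)$, so the uniqueness in Proposition~\ref{prop:min.via.minimization} indeed forces $w=\min(u_1,u_2)$; in (e) $\Rightarrow$ (b) the element $\hat v=\min(v,\max(u_1,u_2))$ majorizes $u_1,u_2$, is squeezed between $0$ and $\max(u_1,u_2)$, and Lemma~\ref{lemma:max.properties}\,\ref{max.prop.norm.ineq} rules out $\hat v\ne\max(u_1,u_2)$. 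The route differs from the paper's in how the cycle is closed: the paper proves (d) $\Rightarrow$ (b) in one step, taking an arbitrary upper bound $\tilde u\in\Vplus$, forming the (abstract) $\Vplus$-greatest lower bound of $\tilde u$ and $\max(u_1,u_2)$, and concluding equality with $\max(u_1,u_2)$ via Lemma~\ref{lem-strict} and the norm-minimality of $\max$; you instead factor this as (d) $\Rightarrow$ (e) $\Rightarrow$ (b), where the second step parallels the paper's argument (with \eqref{eq-ordered} plus Lemma~\ref{lemma:max.properties}\,\ref{max.prop.norm.ineq} playing the role of Lemma~\ref{lem-strict}) and the first step is an extra argument not in the paper's proof of this theorem, though it mirrors the second half of the proof of Proposition~\ref{prop:Vplus.lattice}. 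The paper's version is one step shorter; yours buys the explicit identification, under (d) alone, of the abstract greatest lower bound with the variationally defined $\min(u_1,u_2)$, which in the paper only emerges after traversing the whole cycle. (Your attribution of (a) $\Rightarrow$ (c), (e) to Proposition~\ref{prop:Vplus.lattice} is also fine, since its hypothesis is literally statement \ref{i-lub}, whereas the paper invokes it from \ref{i-max}.)
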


\begin{proof}
\ref{i-lub} \imp \ref{i-max}
This follows from Proposition~\ref{prop:lub.comparable}.

\ref{i-max} \imp \ref{i-lattice}
and \ref{i-max} \imp \ref{i-min}
This is the conclusion of Proposition~\ref{prop:Vplus.lattice}.

\ref{i-lattice} \imp \ref{i-lub} 
This follows from the definition of lattice. 

\ref{i-min} \imp \ref{i-glb} This is trivial.

To complete the proof we show that \ref{i-glb} \imp \ref{i-max}.
Let $u_1,u_2 \in \Vplus$ and $u=\max(u_1,u_2) \in \Vplus$.
Also let $\ut \in \Vplus$ be an arbitrary upper bound of $u_1$ and $u_2$.
Let $v$ be  the $\Vplus$-greatest lower bound of $u$ and $\ut$, 
which exists by assumption.
As 
$u_i$
is also a common lower bound, we have $u_i \le v$, $i=1,2$.
Hence $v$ is an upper bound of $u_1$ and $u_2$.
As $u$ is the upper bound with least norm, 
we have $\normV{v} \ge  \normV{u}$.
Since also $v \le u$, Lemma~\ref{lem-strict} implies that $v=u$.
Thus $u=v \le \ut$, showing that 
$u=\max(u_1,u_2)$ is the $\Vplus$-least upper bound of $u_1$ and $u_2$.
As $u_1$ and $u_2$ were arbitrary, we have shown \ref{i-max}.
\end{proof}

\noindent A natural question to ask is whether $V$ (and not only
$\Vplus$) is a lattice in this setting? As the ordering is linear, one
can show that as soon as every pair of elements in $V$ has a
common upper bound (in $V$), it follows that
$V$ is a lattice; clearly this is also a 
necessary condition. 
One can refine this a bit, and in order to do so let us introduce the
linear subspace (see Lemma~\ref{lem-subspace}) 
\[
V_0:=\{u \in V : u \le v \text{ for some } v \in \Vplus\}
=\{u-v:u,v\in \Vplus\}
\]
of $V$. 
We then have the following results
(whose proofs we postpone to the end of this section).

\begin{proposition} \label{prop-V0-lattice}
$V_0$ is a lattice if and only if $\Vplus$ is a lattice.
In this case, for all $u_1,u_2 \in \Vplus$, their
$\Vplus$-least upper bound $\max(u_1,u_2)$
is even least among all upper bounds in $V$.

Similarly, the  $\Vplus$-greatest lower bound $\min(u_1,u_2)$
is greatest also among all lower bounds of $u_1$ and $u_2$ in $V$.
\end{proposition}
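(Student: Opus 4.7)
The plan is to exploit the translation invariance of $\le$ (Definition~\ref{deff-preorder}) to transfer the lattice structure of $\Vplus$ to the linear subspace $V_0$, and conversely.

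One direction is essentially immediate: if $V_0$ is a lattice and $u_1,u_2\in\Vplus\subseteq V_0$, then their $V_0$-least upper bound $w$ satisfies $w\ge u_1\ge 0$, so $w\in\Vplus$; every upper bound in $\Vplus$ is also an upper bound in $V_0$, so $w$ is automatically the $\Vplus$-least upper bound, and Theorem~\ref{thm-char-lattice} then gives that $\Vplus$ is a lattice.

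For the converse, given $u_1,u_2\in V_0$, I would shift them into $\Vplus$ by a common element, apply the $\Vplus$-lattice property, and shift back. Using the representation $V_0=\{a-b:a,b\in\Vplus\}$, write $u_i=a_i-b_i$ and set $t:=b_1+b_2\in\Vplus$ (sums of elements of $\Vplus$ stay in $\Vplus$ by transitivity and translation invariance). Then $u_i+t\in\Vplus$ for $i=1,2$, so $M:=\max(u_1+t,u_2+t)$ exists in $\Vplus$ by Theorem~\ref{thm-char-lattice}, and $w:=M-t\in V_0$ satisfies $w\ge u_i$ by translation invariance. For minimality, any $V_0$-upper bound $w'$ of $u_1,u_2$ satisfies $w'+t\ge u_i+t\ge 0$, hence $w'+t\in\Vplus$ and dominates $u_i+t$; the $\Vplus$-least upper bound property yields $M\le w'+t$, i.e.\ $w\le w'$. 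Greatest lower bounds in $V_0$ then follow from $\min(u,v)=-\max(-u,-v)$, which is available since $V_0$ is a linear subspace and \eqref{eq-b<=-a} translates between the two orderings.

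For the ``in this case'' statement, given $u_1,u_2\in\Vplus$, any upper bound $w\in V$ satisfies $w\ge u_1\ge 0$ and hence $w\in\Vplus$, so the $\Vplus$-least upper bound property directly gives $\max(u_1,u_2)\le w$. For a lower bound $w\in V$ of $u_1,u_2$, the relation $w\le u_1\in\Vplus$ only forces $w\in V_0$; the final step is to form $w_+:=\max(w,0)\in\Vplus$ using the $V_0$-lattice structure just established, to note that $w_+\le u_i$ (since each $u_i$ dominates both $w$ and $0$), and to invoke the $\Vplus$-greatest lower bound property, obtaining $w\le w_+\le\min(u_1,u_2)$. The main delicate point will be the minimality argument in the converse direction, where one must check that the shifted competitor $w'+t$ lies in $\Vplus$ before the $\Vplus$-lattice hypothesis can be applied.
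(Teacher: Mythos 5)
Your proof is correct and follows essentially the same route as the paper's: shift $u_1,u_2\in V_0$ by a common positive element so that they land in $\Vplus$ (your $t=b_1+b_2$ is exactly the negative of the paper's common lower bound $-v_1-v_2$), apply the $\Vplus$-lattice structure via Theorem~\ref{thm-char-lattice}, shift back, and use that upper (resp.\ lower) bounds in $V$ of elements of $\Vplus$ automatically lie in $\Vplus$ (resp.\ $V_0$). The only cosmetic deviations are your use of the decomposition $V_0=\{a-b:a,b\in\Vplus\}$ instead of explicitly naming a common lower bound, and the small detour via $w_+$ (the $V_0$-least upper bound of $w$ and $0$) where the paper simply notes that $\min(u_1,u_2)$ is also the $V_0$-greatest lower bound; both variants are fine.
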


\noindent
In particular, it follows that one can replace 
``$\Vplus$-least'' and ``$\Vplus$-greatest''
by ``$V$-least'' and ``$V$-greatest'' (with the obvious interpretation)
in Theorem~\ref{thm-char-lattice} 
(while leaving the other $\Vplus$'s) to produce four more equivalent statements.

\begin{theorem} \label{thm-V-lattice}
$V$ is a lattice if and only if $V=V_0$ and $\Vplus$ is a lattice.
\end{theorem}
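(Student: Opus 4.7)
The plan is to combine Proposition~\ref{prop-V0-lattice}, which already equates the lattice property of $V_0$ with that of $\Vplus$, with two elementary observations: that the $V$-least upper bound of any element with $0$ (when it exists) lies in $\Vplus$, and that the $V$-greatest lower bound of two nonnegative elements (when it exists) is itself nonnegative. Everything else is bookkeeping.

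For the \emph{sufficiency} direction, suppose $V=V_0$ and $\Vplus$ is a lattice. Proposition~\ref{prop-V0-lattice} then asserts that $V_0$ is a lattice, and the assumed equality $V=V_0$ transports this structure to $V$.

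For the \emph{necessity} direction, suppose $V$ is a lattice. Given any $u\in V$, let $v$ denote the $V$-least upper bound of $u$ and $0$. Then $v\ge 0$, so $v\in\Vplus$, and $u\le v$, so $u\in V_0$; hence $V=V_0$. To see that $\Vplus$ is a lattice, take $u_1,u_2\in\Vplus$ and let $v$ and $w$ be, respectively, their $V$-least upper bound and $V$-greatest lower bound in $V$. Then $v\ge u_1\ge 0$, and $0$ is a lower bound of the pair $u_1,u_2$, which forces $w\ge 0$; so both $v$ and $w$ lie in $\Vplus$. Every $\Vplus$-upper bound of $u_1,u_2$ is automatically a $V$-upper bound, making $v$ the $\Vplus$-least upper bound; symmetrically, $w$ is the $\Vplus$-greatest lower bound.

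I do not anticipate any serious obstacle. All substantive work is already contained in Proposition~\ref{prop-V0-lattice}, which handles the passage between $V_0$ and $\Vplus$; what remains is just the verification that $\Vplus$ inherits the lattice operations of $V$ by bracketing with $0$, and that $V=V_0$ is forced as soon as every element of $V$ has a lattice-theoretic join with $0$.
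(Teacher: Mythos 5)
Your proof is correct and takes essentially the same route as the paper: sufficiency via Proposition~\ref{prop-V0-lattice} together with $V=V_0$, and necessity by taking the $V$-least upper bound of $u$ and $0$ to conclude $V=V_0$. The only (harmless) deviation is that where the paper then deduces that $\Vplus$ is a lattice by applying Proposition~\ref{prop-V0-lattice} to $V_0=V$, you verify it directly by bracketing $u_1,u_2\in\Vplus$ with $0$ inside $V$, which merely re-derives that half of the proposition.
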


\noindent
Note that $V=V_0$ if and only if every element in $V$ has an upper bound in $\Vplus$,
or equivalently, if every pair of elements in $V$ has a common upper bound in $V$.

The condition $V=V_0$ cannot be dropped, as seen by the following
example.

\begin{example} Let  $V=\Vt=\C$ and say that
$u \le v$ if $\Re u \le \Re v$ and $\Im u = \Im v$.
Then $\Vplus=\{\lambda\in\R: \lambda\geq 0\}$ and $V_0=\R \ne V$.
Here  $\Vplus$ and $V_0$ are lattices, but $V$ is not.
\end{example}

\noindent 
We do not know if $V_0$ is always closed, but we next show 
that $V_0$ is indeed a linear subspace.

\begin{lemma} \label{lem-subspace}
$V_0$ is a linear subspace.
\end{lemma}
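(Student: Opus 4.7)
The plan is to verify the three linear subspace axioms for $V_0$ --- containing zero, closure under addition, and closure under real scalar multiplication --- by freely switching between the two descriptions of $V_0$ given in the statement. The ``difference'' description $\{u-v:u,v\in\Vplus\}$ is visibly symmetric under negation, so it will be the more convenient one when multiplying by negative scalars, while either description handles addition cleanly.

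First I would confirm that the two descriptions of $V_0$ coincide, since the subsequent argument uses whichever one is handier. If $v_1,v_2\in\Vplus$, then $v_2\ge 0$ gives $-v_2\le 0$ via \eqref{eq-b<=-a}, and property~(3) of Definition~\ref{deff-preorder} yields $v_1-v_2\le v_1\in\Vplus$. Conversely, if $u\le w$ with $w\in\Vplus$, then adding $-u$ to both sides (property~(3) again) gives $0\le w-u$, so $w-u\in\Vplus$ and $u=w-(w-u)$ is a difference of two elements of $\Vplus$.

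With the equivalence in hand, $0=0-0\in V_0$. For closure under addition I would use the second description: if $u_i=v_i-w_i$ with $v_i,w_i\in\Vplus$ for $i=1,2$, then $u_1+u_2=(v_1+v_2)-(w_1+w_2)$, and two applications of property~(3) together with transitivity give $v_1+v_2\ge v_2\ge 0$ and likewise $w_1+w_2\in\Vplus$. For closure under real scalar multiplication, write $u=v-w$ with $v,w\in\Vplus$. If $\alpha\ge 0$, property~(4) applied to $0\le v$ and $0\le w$ gives $\alpha v,\alpha w\in\Vplus$, so $\alpha u=\alpha v-\alpha w\in V_0$. If $\alpha<0$, rewrite $\alpha u=(-\alpha)w-(-\alpha)v$ with $-\alpha>0$, which reduces to the previous case.

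There is no real obstacle here; the argument is a mechanical unpacking of Definition~\ref{deff-preorder}. The one point worth flagging is that the preorder only directly respects multiplication by \emph{non-negative} reals, so closure under multiplication by $-1$ is not immediate from the first description alone and genuinely needs the symmetric ``difference'' form. Accordingly $V_0$ is a real linear subspace of $V$, consistent with the real-vector-space structure of $\SobU$ established in Lemma~\ref{lem-Sob-vector-space}.
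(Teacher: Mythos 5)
Your proof is correct and follows essentially the same route as the paper: the only nontrivial point in both arguments is the negation step, which you and the paper handle identically by observing that $u\le w\in\Vplus$ forces $w-u\in\Vplus$ (your direction two of the equivalence, the paper's $-u\le w-u\in\Vplus$), while closure under addition and nonnegative scalars is routine. The only difference is cosmetic: you explicitly verify the equality of the two descriptions of $V_0$, which the paper asserts without proof in its definition, and then work with the difference form throughout.
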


\begin{proof}
Let $u,v \in V_0$ and $\al \ge 0$. Then
it is rather obvious that $u+v,\al u \in V_0$.
The only nonobvious fact we need to show is that $-u \in V_0$.
As $u \in V_0$, there is $w \in \Vplus$ such that $u \le w$.
Then $0 \le w-u \in V$ and $w \ge 0$. Hence
$
  -u \le w-u \in \Vplus.
$
\end{proof}

\begin{proof}[Proof of Proposition~\ref{prop-V0-lattice}]
First assume that $\Vplus$ is a lattice,
and let $u_1,u_2 \in V_0$.
We need to show that $u_1$ and $u_2$ have a $V_0$-least upper bound.
By Lemma~\ref{lem-subspace},  $-u_i \in V_0$  and thus it  has 
an upper bound $v_i \in \Vplus \subset V_0$, $i=1,2$.
It follows that $w:=-v_1-v_2 \in V_0$ is a common lower bound of $u_1$ and $u_2$.
Hence, $0\le u_i-w\in\Vplus$, $i=1,2$.
Then $z= \max(u_1-w,u_2-w)$ is 
the $\Vplus$-least upper  bound of $u_1-w$ and $u_2-w$,
by Theorem~\ref{thm-char-lattice}, as $\Vplus$ is assumed to be a
lattice. 
Any upper bound in $V$ of $u_1-w$ and $u_2-w$ necessarily belongs to 
$\Vplus$, and thus $z$ is also 
least among all upper  bounds of $u_1-w$ and $u_2-w$ in $V$.
It follows that $w+z$ is a $V$-least upper  bound of $u_1$ and $u_2$.
As $w+z \in V_0$, it is 
least among all upper bounds in $V_0$ as well.
Applying this to $-u_1$ and $-u_2$ shows that $u_1$ and $u_2$ also have
a $V_0$-greatest lower bound, and thus $V_0$ is a lattice.

Next, assume that $V_0$ is a lattice and let $u_1,u_2 \in \Vplus$.
Then $u_1$ and $u_2$ have a $V_0$-least upper bound $w$ and 
a $V_0$-greatest lower bound $z$.
Any upper bound of $u_1$ (in $V$)
must belong to $\Vplus$ and thus $w$
is a $\Vplus$-least upper bound of $u_1$ and $u_2$.
Moreover, $0$ is a common lower bound of $u_1$ and $u_2$,
so $z \ge 0$, as it is 
greatest, and thus $z$ is a $\Vplus$-greatest lower bound.
Hence  $\Vplus$ is a lattice.

Finally, let $u_1, u_2 \in \Vplus$ and assume that $\Vplus$ (or equivalently
$V_0$) is a lattice.
By the above, the $\Vplus$-least upper (greatest lower) bound 
of $u_1$ and $u_2$ is 
$V_0$-least 
(greatest) as well.
It follows that it is also $V$-least (greatest), since every
upper (lower) bound $v\in V$ of $u_1$ and $u_2$ satisfies
$v\ge u_1\ge0$ ($v\le u_1\in\Vplus$) and thus necessarily belongs to $V_0$.
\end{proof}

\begin{proof}[Proof of Theorem~\ref{thm-V-lattice}]
First, assume that $V$ is a lattice.
Then $u \in V$ and $0$ have a $V$-least upper bound $w$ which must belong to $\Vplus$,
so $u \in V_0$, i.e.\ $V_0=V$. 
Moreover, $V_0$ is thus a lattice by assumption, and hence, 
by Proposition~\ref{prop-V0-lattice}, $\Vplus$ is also a lattice.

Conversely, if $\Vplus$ is a lattice and $V=V_0$, then $V=V_0$ is also a lattice
by Proposition~\ref{prop-V0-lattice} again.
\end{proof}

\section{Examples of gradient spaces. I. Sobolev spaces}
\label{sect-ex-1}

\noindent In this and the next section we shall present a collection of examples
that serve as a justification for the abstract gradient spaces we have
introduced. As we will see, several known cases of generalized gradients
are included and, in some cases, our framework leads to extensions of
previously known results, as well as a few new results. 
We also show that higher-order operators, such as the Laplacian,
can be considered as ``gradients'' in our  framework. 
Moreover, we
demonstrate explicitly that noncommutative algebras can be included,
by studying finite-dimensional matrix algebras as well as 
infinite-dimensional operator algebras. 

\subsection{Sobolev spaces on unweighted 
\texorpdfstring{$\R^n$}{Rn}}
\label{sect-unweighted-Rn}

Let $\MM(\R^n)$ and $\MM(\R^n,\R^n)$ be the sets of a.e.-equivalence classes
(with respect to the Lebesgue measure) of measurable functions from
$\R^n$ to $\R$ and from $\R^n$ to $\R^n$, respectively. Let $1<p<\infty$.
We introduce a relation $R$ so that
\begin{equation}   \label{eq-grad-spc-Rn}
    \U=(L^p(\R^n),\MM(\R^n), L^p(\R^n,\R^n),\MM(\R^n,\R^n),R)
\end{equation}
is a gradient space, where $L^p(\R^n,\R^n)$ is the set of 
vector-valued $L^p$ functions from $\R^n$ to $\R^n$.
Here we let
\[
    (u,\grad u) \in R
\]
if $\grad u$ is the distributional gradient of $u$, defined by
\begin{equation}  \label{eq-def-distrib-grad}
\int_{\R^n} u(x) \grad\phi(x) \,dx = -\int_{\R^n} \grad u(x) \phi(x) \,dx
\quad \text{for all }\phi\in C_0^\infty(\R^n),
\end{equation}
where  $C_0^\infty(\R^n)$ is the space of all infinitely
differentiable functions with compact support in $\R^n$.
Then $\Sob(\U)$ becomes the usual Sobolev space $W^{1,p}(\R^n)$
and since $L^p(\R^n)$ (for $1<p<\infty$) is uniformly convex, properties
\ref{gs:V.reflexive} and \ref{gs:W.reflexive.convex} hold. 
It follows immediately 
from~\eqref{eq-def-distrib-grad} that $\nabla (-u)=-\nabla u$,
and thus \ref{gs:minus.gradient} holds.
It is also straightforward that~\eqref{eq-def-distrib-grad} is preserved
under taking limits as $u_j\to u$ in $L^p(\R^n)$ and $\grad u_j\to v$
in $L^p(\R^n,\R^n)$, i.e.\ $(u,v)\in R$. 
Hence, \ref{gs:upper.gradient.limit} is fulfilled and we conclude
that $\U$ is a gradient space.  

A (linear) partial order on $\MM(\R^n)$ is introduced by writing $f\geq 0$ if
the set $\{x\in\R^n: f(x)<0\}$ has Lebesgue measure zero.
One also easily verifies that the requirements in 
Definitions~\ref{def:preordered.upper.gradient.space}
and~\ref{def:ordered-grad-space} are satisfied.
We may thus conclude that~\eqref{eq-grad-spc-Rn}
is an ordered gradient space with respect to the
standard (almost everywhere) ordering of functions.

It is well known that if $\Om\subset\R^n$ is a bounded open set then
\[
\|u\|_{L^p(\Om)} \le C_\Om \|\grad u\|_{L^p(\Om)}
\quad \text{for all } u\in C^\infty_0(\Om),
\]
see e.g.\ Theorem~2.4.1 in Ziemer~\cite{Ziemer}.
Thus, the closure $W^{1,p}_0(\Om)$ of $C^\infty_0(\Om)$ in $W^{1,p}(\R^n)$
is a Poincar\'e set.
By Theorem~2.5.1 in~\cite{Ziemer}, it is also a regular Rellich--Kondrachov cone.

With respect to the above gradient relation, the corresponding
Dirichlet problem becomes the \p-Laplace equation
\[
\Delta_p u := \dvg(|\grad u|^{p-2} \grad u) =0
\]
 with the boundary data
$u=f\in W^{1,p}(\R^n)$ on $\bdy\Om$, while the Rayleigh quotient gives rise
to the eigenvalue problem $\Delta_p u - \lambda u^{p-1}=0$
for $0\le u\in W^{1,p}_0(\Om)$.

Another possibility is to let
\[
    \Ut=(L^p(\R^n),\MM(\R^n), L^p(\R^n),\MM(\R^n),\widetilde{R})
\]
with $(u,g) \in \widetilde{R}$ if $g\ge|\grad u|$
a.e.\ (note that we need an inequality for property \ref{g:sum} to hold).
This leads to the
same Sobolev space $\Sob(\Ut)=W^{1,p}(\R^n)$.  It is easily seen that
all our axioms are satisfied also in this case and that we obtain an
ordered gradient space (with almost everywhere ordering).  This
definition of gradient relation is less orthodox, but is more in line
with the metric space definitions given in
Sections~\ref{sec:pweak.upper.gradient}--\ref{sec-grad-from-PI} below.
Let us however first consider weighted $\R^n$.

\subsection{Weighted \texorpdfstring{$\R^n$}{Rn}}
\label{sect-weighted-Rn}

Let $1<p<\infty$ and let 
$d\mu=w(x)\,dx$ be a \p-admissible weight, 
see Heinonen--Kilpel\"ainen--Martio~\cite{HeKiMa}.
Here we let 
\[
    \U=(L^p(\R^n,\mu),\MM(\R^n), L^p(\R^n,\R^n,\mu),\MM(\R^n,\R^n),R)
\]
with $(u,v) \in R$ if $v$ 
is the gradient of $u$ defined in
Section~1.9 in \cite{HeKiMa}, i.e.\ there is a sequence 
$\phi_i\in C^\infty(\R^n)$ such that both $\phi_i\to u$ in $L^p(\R^n,\mu)$
and $\grad\phi_i\to v$ in $L^p(\R^n,\R^n,\mu)$.
(The gradient depends on $p$ and $\mu$, but for 
locally Lipschitz functions it
coincides with the usual distributional gradient.)
We thus obtain $\Sob(\U)=H^{1,p}(\R^n;\mu)$, in the notation of \cite{HeKiMa}.
If $\mu$ is the Lebesgue measure, then $H^{1,p}(\R^n;\mu)$ is the unweighted
Sobolev space $W^{1,p}(\R^n)$ from Section~\ref{sect-unweighted-Rn}.
A natural partial order on $\Sob(\U)$ is as before given by $\mu$-a.e.\
pointwise inequality of functions.

As in the unweighted case, for a bounded open set $\Om\subset\R^n$, 
the closure $H^{1,p}_0(\Om;\mu)$ 
of $C^\infty_0(\Om)$ in $H^{1,p}(\R^n;\mu)$ will be a Poincar\'e set and
a regular Rellich--Kondrachov cone.
The corresponding Dirichlet problem will be
\[
\min \int_\Om |\grad u|^p w(x)\,dx
\]
among all $u\in H^{1,p}(\R^n;\mu)$ with $u-f\in H^{1,p}_0(\Om;\mu)$.
Here, $\grad u$ stands for the gradient defined above. 
One easily verifies that the minimization problem is equivalent to
the weighted \p-Laplace equation
$\dvg(w(x)|\grad u|^{p-2}\grad u)=0$, see Chapter~5 in~\cite{HeKiMa},
and that the Rayleigh quotient leads to the weighted eigenvalue problem
\(
\dvg(w(x)|\grad u|^{p-2}\grad u)-\lambda u^{p-1}w(x)=0.
\)

As in the unweighted situation, we can alternatively let
\begin{equation} \label{eq-grad-struct-alt}
    \Ut=(L^p(\R^n,\mu),\MM(\R^n), L^p(\R^n,\mu),\MM(\R^n),\widetilde{R})
\end{equation}
with $(u,g) \in \widetilde{R}$ if $g\ge|\grad u|$ a.e.
Then $\Sob(\Ut)=H^{1,p}(\R^n;\mu)$ as before,
and the above minimization problems  will be the same
as in the vector-valued case.
One can also consider similar gradient spaces on open subsets of
$\R^n$.  In all cases it is easily seen that all our axioms are
satisfied and that we obtain ordered gradient spaces.

Another nonstandard choice of a gradient relation in unweighted $\R^n$
is
\[
\widehat{R} = \{(u,g) : u\in W^{1,p}(\R^n) \text{ and } 
    g \ge M|\grad u| \text{ a.e.}\},
\]
where  $M|\grad u|$ is the Hardy--Littlewood maximal function 
of $\grad u$.
This choice is related to the Haj\l asz gradient in 
Section~\ref{sec:Hajlasz.gradient} below and leads to yet another Dirichlet
problem.
Similarly, on weighted $\R^n$, the weighted maximal function
\[
M_{\mu}v(x) := \sup_{B\ni x} \frac{1}{\mu(B)} \int_B |v|\, d\mu
\] 
provides a gradient relation.

For other possible choices of gradient relations
leading to different minimization
problems and partial differential equations see 
Section~\ref{sect-gen-var-probs} below.

\subsection{Newtonian Sobolev spaces on metric spaces}
\label{sec:pweak.upper.gradient}

In order to see how to fit these spaces 
into our theory of gradient spaces we first 
need to give a (very brief) introduction to the theory
of Newtonian spaces.

Let $1<p<\infty$ and let $X=(X,d,\mu)$ be a 
metric space equipped with a metric $d$ and a Borel
regular measure $\mu$, which is positive and finite on all balls.
A measurable function $g:X\to[0,\infty]$ is a \emph{\p-weak upper gradient} 
of an everywhere defined  function
$u:X\to[-\infty,\infty]$ if for 
\p-almost all 
nonconstant rectifiable curves $\ga$ in $X$, 
\begin{equation}   \label{def-upper-grad}
|u(x)-u(y)| \le \int_\ga g\,ds,
\end{equation}
where $x$ and $y$ are the endpoints of $\ga$, and
the integration over $\ga$ is with respect to the arc length $ds$.
Here \p-almost all means 
that there exists $\rho\in L^p(X)$ such that $\int_\ga\rho\,ds=\infty$ 
for all curves $\ga$ failing~\eqref{def-upper-grad},
see e.g.\ Chapter~1 in Bj\"orn--Bj\"orn~\cite{BBbook}
for this and the other basic Newtonian theory needed here.

Moreover, a Borel function $g:X\to[0,\infty]$ is an \emph{upper gradient}
of $u$ if \eqref{def-upper-grad} holds for \emph{all} nonconstant rectifiable curves.
However, if we had based our theory upon upper gradients, then
\ref{gs:upper.gradient.limit} would have failed, 
and therefore the \p-weak upper gradients will be the ones that
are of interest to us.

Upper gradients were introduced by Heinonen and Koskela~\cite{HeKo98},
while Koskela and MacManus~\cite{KoMc} introduced the \p-weak upper gradients
(because of the problem mentioned above with upper gradients and 
\ref{gs:upper.gradient.limit}).
Shanmugalingam~\cite{Sh-rev} defined
the Newtonian spaces based on these notions (they can equivalently be 
defined using 
either upper gradients or \p-weak upper gradients)
and showed that they are always Banach spaces.

Note that $u$ above is required 
to be everywhere defined for the definition of (\p-weak) upper gradients
to make sense. 
Indeed, it is easily verified that 
if $\ut=u$ $\mu$-a.e.\ and $g$ is a \p-weak upper gradient of $u$, then 
$g$ need not be a \p-weak upper gradient of $\ut$.
(In $\R^2$, let e.g.\ $u\equiv0$ and $\ut=\chi_{\R \times \{0\}}$,
i.e.\ the characteristic function of $\R \times \{0\}$.)
However, as we want $V$ and $W$ to be normed (not just seminormed) spaces, 
the easiest approach is to consider $\mu$-a.e.\ equivalence classes.

We therefore let $\MM(X)$ be the set of 
$\mu$-a.e.\ equivalence classes of measurable functions from
$X$ to $[-\infty,\infty]$, and set
\[
       \U=(L^p(X),\MM(X),L^p(X),\MM(X),R),
\]
where $([u],[g]) \in R$ if there are everywhere defined representatives
$\ut \in [u]$ and $\gt \in [g]$ such that $\gt$ is a \p-weak upper gradient
of $\ut$.
Note, however, that not all representatives of $[u]$ have \p-weak upper
gradients in $L^p(X)$.

The space  $\Sob(\U)$ becomes the Newtonian Sobolev space. 
Strictly speaking $\Sob(\U)=\hNp(X)/{\sim}$ in the notation of \cite{BBbook}
(where $\sim$ is the $\mu$-a.e.-equivalence relation),
while the Newtonian space $\Np(X)$ considered therein consists of all 
pointwise defined 
$u \in L^p(X)$ which have \p-weak upper gradients in $L^p(X)$.
If $X=\R^n$ (with Euclidean distance) and 
$d\mu=w\,dx$ is a \p-admissible weight, 
then $\Sob(\U)$ 
coincides
with $H^{1,p}(\R^n;\mu)$ from Section~\ref{sect-weighted-Rn},
see Appendix~A.2 in \cite{BBbook}.
If $\mu$ is the Lebesgue measure, then $\Sob(\U)=W^{1,p}(\R^n)$,
see Shanmugalingam~\cite{Sh-rev}.

That axioms \ref{g:sum}, \ref{g:mult.scalar} and
\ref{gs:V.reflexive}--\ref{gs:minus.gradient} hold follows from
results in Chapter~1 of \cite{BBbook}, while
\ref{gs:upper.gradient.limit} follows from Proposition~2.3 in
\cite{BBbook}.  It is also clear (due to the monotonicity of
integration) that $\U$ is an ordered gradient space and 
a lattice (with the $\mu$-a.e.\ ordering).  
Moreover, the maximum $\max(u,v)$ is then the
$\mu$-a.e.\ pointwise maximum.

The minimal gradient that we obtained in
Theorem~\ref{thm:DpU.minimal.gradient} is in this case usually
called the minimal \p-weak upper gradient and it is not only
norm-minimal but also pointwise minimal $\mu$-a.e.\ and local in the
sense that it is zero $\mu$-a.e.\ on every level set $\{x\in X:
u(x)=c\}$, see Chapter~2 in \cite{BBbook}.

Standard assumptions in the theory of Newtonian spaces on metric
spaces are that the underlying measure $\mu$ is \emph{doubling} and
supports a \emph{\p-Poincar\'e inequality}, i.e.\ there exist
constants $C>0$ and $\lambda \ge 1$ such that for all balls $B=B(x,r)$
we have $\mu(2B)\le C\mu(B)$, and for all integrable functions $u$ on
$X$ and all (\p-weak) upper gradients $g$ of $u$,
\begin{equation}   \label{eq-def-p-PI}
        \vint_{B} |u-u_B| \, d\mu
        \le C r \biggl( \vint_{\lambda B} g^{p} \,d\mu \biggr)^{1/p},
\end{equation}
where $ f_B :=\vint_B f \,d\mu := \int_B f\, d\mu/\mu(B)$
and $\lambda B= B(x,\lambda r)$.
Here, for simplicity, we also assume that $X$ is unbounded.
These assumptions imply 
that the Friedrichs' inequality
\begin{equation} \label{eq-Friedrichs}
        \int_{E} |u|^p \,d\mu \le C_E \int_{E} g_u^{p} \,d\mu
\end{equation}
holds for all $u$ in 
\begin{equation}   \label{eq-Nhat-wrong}
   \widehat{\K}_0(E):=\{[u] \in \Sob(\U): u=0 \text{ on } X \setm E\},
\end{equation}
where $E$ is an arbitrary bounded measurable subset of $X$.
Thus, the space $\widehat{\K}_0(E)$ 
is a (closed convex) Poincar\'e set for any bounded measurable set
$E\subset X$,
and the theory from Sections~\ref{sect-grad-spcs}--\ref{sec-obstacle-problem}
can be applied.

Usually one considers the slightly smaller zero-Sobolev space
\begin{equation} \label{eq-hNp0}
   \K_0(E)
   :=\{[u]: u \in \Np(X) \text{ and }
   u=0 \text{ on } X \setm E\} \subset \widehat{\K}_0(E).
\end{equation}
If $G$ is open, and $X$ is weighted $\R^n$ (as in Section~\ref{sect-weighted-Rn}),
then $\K_0(G)=H^{1,p}_0(G;\mu)$,
while $\widehat{\K}_0(G)$ can be larger;
e.g.\ if $G=B \setm K$, where $B$ is a 
ball and $K \subset B$ is a compact set
with zero measure and positive capacity, then 
\[
   \widehat{\K}_0(G) = H^{1,p}_0(B;\mu)
   \varsupsetneq H^{1,p}_0(G;\mu) = \K_0(G).
\]
For nonopen $E$ the space $\K_0(E)$ is essentially the space
$\Np_0(E)$, cf.\ the study of Dirichlet and obstacle problems on
nonopen sets in Bj\"orn--Bj\"orn~\cite{BBnonopen}.  The Dirichlet
problem in this setting was first considered by
Shanmugalingam~\cite{Sh-harm}, and extensively studied since then.
See Bj\"orn--Bj\"orn~\cite{BBbook} for further references.

Since the minimal \p-weak upper gradient is not a linear operation
(as $g_{-u}=g_u$), Proposition~\ref{prop:u.gu.linear.unique.solution}
cannot be applied in this case.
Nevertheless, uniqueness of solutions of the Dirichlet problem is 
proved by different methods under the
assumption of a \p-Poincar\'e inequality, see 
Cheeger~\cite[Theorem~7.14]{Cheeg}
or Bj\"orn--Bj\"orn~\cite[Theorem~7.2]{BBnonopen}.

It is also known that the \p-Poincar\'e inequality implies the Sobolev
embedding $\widehat{\K}_0(E) 
\to L^q(E)$ for some $q>p$, and that the embedding $\widehat{\K}_0(E)
\to L^p(E)$ is compact, see
Bj\"orn--Bj\"orn~\cite[Theorem~5.51]{BBbook} and Haj\l
asz--Koskela~\cite[Theorems~5.1 and~8.1]{HaKo2000}.  Thus, $\K_0(E)$ and
$\widehat{\K}_0(E)$ are regular Rellich--Kondrachov cones, which makes
it possible to also solve the ``eigenvalue problems''
\[
\min_{u\in\K_0(E)} \frac{\|g_u\|_{L^p(X)}}{\|u\|_{L^p(X)}}
\quad \text{and} \quad
\min_{u\in\widehat{\K}_0(E)} \frac{\|g_u\|_{L^p(X)}}{\|u\|_{L^p(X)}},
\]
see e.g.\ Latvala--Marola--Pere~\cite{LMP}.

\subsection{Newtonian spaces based on Banach function lattices}

In the above theory of Newtonian spaces one can replace the $L^p$ space
by another space. 

A vector space $Y \subset \MM(X)$ of ($\mu$-a.e.\ equivalence classes of) 
measurable functions
on a  metric space $(X,d,\mu)$ 
(as in Section~\ref{sec:pweak.upper.gradient})
is a \emph{Banach function lattice} if
the following axioms hold:
\begin{enumerate} 
\item[(P0)]
$\normY{\cdot}$ determines $Y$, i.e.\ $Y=\{u \in \MM(X) : \|u\|_Y<\infty\}$;
\item[(P1)]
$\normY{\cdot}$ is a norm;
\item[(P2)]
the lattice property holds, i.e.\
if $|u| \le |v|$ $\mu$-a.e.\, then $\|u\|_Y \le \|v\|_Y$;
\item[(RF)]
the Riesz--Fischer property holds, i.e.\
if $u_n \ge 0$ $\mu$-a.e.\ for all $n$, then 
\[ 
      \biggl\|\sum_{n=1}^\infty u_n \biggr\|_Y \le \sum_{n=1}^\infty \|u_n\|_Y.
\]
\end{enumerate} 
Note that a Banach function space is a more restrictive concept requiring
some further axioms, see the discussion
in Mal\'y~\cite{lmaly1}.

Let us additionally assume that $Y$ is reflexive, and that the
norm is strictly convex.  We then introduce the gradient space
\[
    \U=(Y,\MM(X),Y,\MM(X),R),
\]
where $([u],[g]) \in R$ if there are representatives
$\ut \in [u]$ and $\gt \in [g]$ such that $\gt$ is a $Y$-weak upper gradient
of $\ut$, i.e.\ that \eqref{def-upper-grad} holds for $Y$-almost all curves $\ga$.
For the exact definition of $Y$-weak upper gradients see \cite{lmaly1}, 
where the basic theory is developed. 
That axioms 
\ref{g:sum}, \ref{g:mult.scalar} and
\ref{gs:V.reflexive}--\ref{gs:minus.gradient}
hold also follows from results in \cite{lmaly1},
while \ref{gs:upper.gradient.limit} follows from Proposition~5.6
in Mal\'y~\cite{lmaly2}.
Moreover, the gradient space is ordered (with the $\mu$-a.e.\ ordering).

The minimal gradient that we obtained
 in Theorem~\ref{thm:DpU.minimal.gradient}
is in this case usually called the minimal $Y$-weak upper gradient.
Also here it is pointwise minimal $\mu$-a.e.\ and local,
see \cite{lmaly2}.
Note however that in \cite{lmaly2} and \cite{lmaly1}, $Y$ is not assumed
to be reflexive and the norm $\normY{\cdot}$ need not be strictly convex.

One can define $\K_0(E)$ and $\widehat{\K}_0(E)$
similarly as in \eqref{eq-Nhat-wrong} and \eqref{eq-hNp0}.
They are closed, by e.g.\ Corollary~7.2 in \cite{lmaly1}, and
clearly convex.
Whenever they are Poincar\'e sets
 our results in Sections~\ref{subsect-DP} and~\ref{sec-obstacle-problem}
show the existence of solutions to the Dirichlet and obstacle problems.
(We are not aware of any earlier such results in the Newtonian theory beyond $L^p$ 
spaces.)

\subsection{The Haj{\l}asz gradient}\label{sec:Hajlasz.gradient}

The theory of Haj\l asz gradients, as introduced by Haj\l asz in \cite{Haj-PA},
is similar to that of (\p-weak) upper gradients
in that it also applies to arbitrary metric spaces and that the gradient is
scalar (rather than vector-valued) and the operation of minimal Haj\l asz gradient
is not linear.
There are, however, several substantial differences,
such as lack of locality, see below.

Let  $(X,d,\mu)$ be a metric space as in
Section~\ref{sec:pweak.upper.gradient}. A Borel function
$h:X\to[0,\infty]$ is a \emph{Haj\l asz gradient} of a function
$u:X\to[-\infty,\infty]$ if for all $x,y\in X\setm Z$, where
$\mu(Z)=0$, we have that
\begin{equation}   \label{def-Hajlasz-grad}
|u(x)-u(y)| \le d(x,y) (h(x)+h(y)).
\end{equation}
As before, to obtain an ordered gradient space we choose $1<p<\infty$
and let
\[
   \U=(L^p(X),\MM(X),L^p(X),\MM(X),R).
\]
This time  $(u,h) \in R$ if $h$ is a Haj\l asz gradient of $u$
(which is easily seen to be well-defined on $\mu$-a.e.\ equivalence classes
of functions).

The Sobolev space $\SobU$ then coincides with the Haj\l asz space $M^{1,p}(X)$,
consisting of all $u\in L^p(X)$ with a Haj\l asz gradient in $L^p(X)$.
This almost always (except for some pathological situations) makes $\SobU$ to
be strictly smaller that $L^p(X)$.
In contrast, if $X$ contains no nonconstant rectifiable curves (e.g.\  
$X=\R\setm\Q$ or $X=\R$ with the snowflaked metric $d(x,y)=|x-y|^\al$, $0<\al<1$),
then $\hNp(X)/{\sim}$ equals $L^p(X)$.
Lemma~4.7 and Theorem~4.8 in Shanmugalingam~\cite{Sh-rev} 
show that one always has 
$M^{1,p}(X)\subset\hNp(X)/{\sim}$ and that 
$4$ times a Haj\l asz gradient is an upper gradient.
Moreover, if $\mu$ is doubling and $X$ satisfies a 
$q$-Poincar\'e inequality for upper gradients for some $q<p$, then
$M^{1,p}(X)=\hNp(X)/{\sim}$.
(Note that if $X$ is complete, $\mu$ is doubling and $X$ satisfies
an (upper gradient) \p-Poincar\'e inequality, then by Keith--Zhong~\cite{KZ}
it also satisfies a $q$-Poincar\'e inequality for some $q<p$.)

It is easily verified that axioms \ref{g:sum}, \ref{g:mult.scalar} and
\ref{gs:V.reflexive}--\ref{gs:minus.gradient} hold for  Haj\l asz gradients.
That \ref{gs:upper.gradient.limit} is satisfied follows from the fact that
every $L^p$-convergent sequence has a $\mu$-a.e.\ converging subsequence.
Moreover, the gradient space is ordered (with the $\mu$-a.e.\ ordering).
Thus, the results in Section~\ref{sect-grad-spcs} apply to Haj\l asz spaces
and show in particular that they are Banach spaces and that there is 
a unique minimal Haj\l asz gradient.
These properties were originally proved in Theorems~2 and~3 in~\cite{Haj-PA}.

Next, Haj\l asz functions \emph{always} satisfy the 1-Poincar\'e inequality, as
is easily seen by twice integrating~\eqref{def-Hajlasz-grad} over a ball $B$
with respect to $x$ and $y$.
As for the Friedrichs' inequality~\eqref{eq-Friedrichs}, a similar repeated 
integration over $B$ and $2B\setm B$ gives for all $u\in M^{1,p}(X)$
vanishing outside a ball $B=B(x,r)$,
\begin{align*}
\vint_B |u|^p\,d\mu &= \vint_{2B\setm B} \vint_B |u(x)-u(y)|^p\, d\mu(x) \,d\mu(y) \\
&\le  \vint_{2B\setm B} \vint_B d(x,y)^p (h(x)+h(y))^p\, d\mu(x) \,d\mu(y) \\
&\le C' 
   r^p \biggl( \vint_B h(x)^p\,d\mu(x) + \vint_{2B\setm B} h(y)^p\,d\mu(y) \biggr)\\
&\le C'' r^p \vint_{2B} h^p\,d\mu,
\end{align*}
provided that $\mu$ is doubling and reverse doubling for $B$, i.e.\ that 
\[
(1+\eps)\mu(B) \le \mu(2B)\le C\mu(B)
\]
for some $\eps,C>0$. 
Thus,
\[
   \|u\|_{L^p(X)}^p  = \int_B |u|^p\,d\mu 
   \le    C_B \int_{2B} h^p\,d\mu
   \le   C_B \|h\|_{L^p(X)}^p .
\]
Note here that the last integral cannot be taken only over $B$, since the Haj\l asz
gradient lacks locality, i.e.\ $u=0$ in some set does not imply that the minimal
Haj\l asz gradient $h_u=0$ in that set.
On the other hand, locality
holds for minimal weak 
upper gradients based on curves.
Also contrary to the upper gradient case, the minimal Haj\l asz gradients
are only norm-minimal, and not $\mu$-a.e pointwise minimal, see Example~B.1
in \cite{BBbook}.

Nevertheless,  our results in Sections~\ref{subsect-DP}
and~\ref{sec-obstacle-problem}
show the existence of solutions to the Dirichlet and obstacle problems.
As far as we know, these problems have not been considered for the Haj\l asz
spaces before.

\subsection{Gradients from Poincar\'e inequalities}
\label{sec-grad-from-PI}

Another possibility to define a gradient relation is through Poincar\'e inequalities;
namely, we say that $(u,k)\in R$ if for all balls $B=B(x,r)\subset X$,
\begin{equation}     \label{eq-def-grad-PI}
    \vint_{B} |u-u_B| \, d\mu \le r \vint_{\lambda B} k \,d\mu,
\end{equation}
where $u_B$ is the integral average as before,
and $\lambda \ge 1$ is some fixed constant. 
We also let 
\[
   \U=(L^p(X),\MM(X),L^p(X),\MM(X),R),
\]
$1<p<\infty$.

It is clear that \ref{g:sum} and \ref{g:mult.scalar} hold.
(Note, however, that for \ref{g:mult.scalar} it is important that we use the 
1-Poincar\'e inequality in~\eqref{eq-def-grad-PI},
since with a \p-Poincar\'e inequality in~\eqref{eq-def-grad-PI}
the subadditivity is not at all clear.) 
Also, \ref{gs:V.reflexive}--\ref{gs:upper.gradient.limit} are clearly 
satisfied, since~\eqref{eq-def-grad-PI} is preserved under taking $L^p$-limits.

The space $\SobU$ consists of all $u\in L^p(X)$ such that \eqref{eq-def-grad-PI}
holds for some \emph{Poincar\'e gradient} $k\in L^p(X)$.
Theorem~\ref{thm:DpU.minimal.gradient} then implies that there exists a
$\mu$-a.e.\ 
unique minimal Poincar\'e gradient $k_u$ satisfying~\eqref{eq-def-grad-PI}.

To solve the Dirichlet and obstacle problems we need Poincar\'e sets.
An application of H\"older's inequality to~\eqref{eq-def-grad-PI}
implies the \p-Poincar\'e inequality~\eqref{eq-def-p-PI} (with $g$
replaced by $k$). 
Assume that $\mu$ is doubling.
We can now use Theorem~5.1 from Haj\l asz--Koskela~\cite{HaKo2000} which implies
the $(q,p)$-Poincar\'e inequality
\begin{equation}     \label{eq-(p,p)-PI}
       \biggl(  \vint_{B} |u-u_B|^q \, d\mu \biggr)^{1/q}
        \le C r \biggl( \vint_{5\lambda B} k_u^{p} \,d\mu \biggr)^{1/p}
\end{equation}
for some $q>p$, and in particular the $(p,p)$-Poincar\'e inequality.
Now let $E\subset X$ be a bounded and measurable set, and fix a ball $B$ such that
$E\subset B$ and $\mu(B\setm E)>0$.
Let $\K_0$ consist of all $u\in\SobU$ such that $u=0$ $\mu$-a.e.\
in $X\setm E$. 
Then \eqref{eq-(p,p)-PI} with $q=p$ implies that for every $u\in\K_0$, 
\begin{align}  \label{eq-to-show-Friedrichs}
\biggl(  \vint_{B} |u|^p \, d\mu \biggr)^{1/p} 
    &\le \biggl(  \vint_{B} |u-u_B|^p \, d\mu \biggr)^{1/p} + |u_B| \nonumber\\
    &\le C r \biggl( \vint_{5\lambda B} k_u^{p} \,d\mu \biggr)^{1/p} + |u_B|.
\end{align}
Moreover, H\"older's inequality and the fact that $u=0$ $\mu$-a.e.\ 
outside $E$ yield
\[
|u_B| \le \vint_B |u|\chi_E \,d\mu 
     \le \biggl( \vint_{B} |u|^p \, d\mu \biggr)^{1/p} 
                \biggl( \frac{\mu(E)}{\mu(B)} \biggr)^{1-1/p} 
     \le \theta \biggl(  \vint_{B} |u|^p \, d\mu \biggr)^{1/p}, 
\]
where $\theta\in(0,1)$.
Inserting this into~\eqref{eq-to-show-Friedrichs} and subtracting from both sides
yields
\[
(1-\theta) \biggl(  \vint_{B} |u|^p \, d\mu \biggr)^{1/p} 
   \le C r \biggl( \vint_{5\lambda B} k_u^{p} \,d\mu \biggr)^{1/p}.
\]
{}From this and the doubling property of $\mu$ we conclude that
\[
\|u\|^p_V = \int_B |u|^p\,d\mu
   \le C_E \int_{5\lambda B} k_u^{p} \,d\mu \le C_E \|k_u\|^p_W,
\]
i.e.\ that $\K_0$ is a Poincar\'e set.
Proposition~\ref{prop:u.gu.linear.unique.solution} now makes it possible to solve
the Dirichlet problem $\min\|k_u\|_W$ among all $u\in\SobU$ such that $u=f$
in $X\setm E$ and $f\in\SobU$ is fixed. 
That $\K_0$ is a Rellich--Kondrachov cone is guaranteed by Theorem~8.1 
in~\cite{HaKo2000}.
We remark that neither the Dirichlet nor the obstacle problem have been
studied in this setting before.

\section{Examples of gradient spaces. II. Other examples}
\label{sect-ex-2}

\subsection{Gradient spaces from continuous linear maps}
\label{sec:ugs.from.continuous}

\noindent Let $\Vt$ and $\Wt$ be vector spaces, let $ V\subseteq \Vt$
be a reflexive Banach space and let $W\subseteq \Wt$ be a strictly
convex Banach space. Let $D\subseteq V$ be a (norm)-closed linear
subspace and let $F:D\to \Wt$ be a continuous linear map.  One defines
the graph of $F$ as the relation $R_F\subseteq \Vt\times \Wt$ with
\begin{align*}
  R_F = \{(u,F(u)): u\in D\},
\end{align*}
and one may readily check that $R_F$ is a gradient relation. To
show that $\U=(V,\Vt,W,\Wt,R_F)$ is a gradient space, one
needs to check property
\ref{gs:upper.gradient.limit} (note that
\ref{gs:minus.gradient} holds since $F$ is linear). 
Thus, assume that $u,u_i\in V$ and $g,g_i\in W$ are such that
$(u_i,g_i)\in R_F$ (i.e.\ $g_i=F(u_i)$) for $i=1,2,\ldots$, and that
\begin{align*}
  \normV{u-u_i}\to 0\textrm{ and }\normW{g-F(u_i)}\to 0.
\end{align*}
Since the domain of $F$ is assumed to be closed, it follows that $u\in
D$, which implies that
\begin{align*}
  \norm{F(u)-F(u_i)}_W=\norm{F(u-u_i)}_W\to 0,
\end{align*}
as $F$ is continuous. Consequently,  $g=F(u)$ which,
by definition, implies that $(u,g)\in R_F$. 

We shall now provide two more concrete examples of this approach, together
with some applications to partial differential equations.

\subsection{More general variational problems and PDEs.}
\label{sect-gen-var-probs}

Let $\Om\subset\R^n$ be a bounded open set and consider $D=V=W^{1,p}(\Om)$
and $W=L^p(\Om;\R^n)$, $1<p<\infty$, e.g.\ with the norm 
$\|v\|^p_W = \int_\Om |v|^p\,dx$.
Let $A(x)$ be an $(n\times n)$-matrix with bounded real measurable entries,
which is uniformly elliptic in the sense that $|A(x)\xi|\ge\al|\xi|$
for some $\al>0$, all $x\in\Om$ and all $\xi\in\R^n$.

Then $F:W^{1,p}(\Om)\to L^p(\Om;\R^n)$, given by $F(u)=A(x)\grad u$,
is a continuous linear mapping and defines a gradient relation $R_F$
as in Section~\ref{sec:ugs.from.continuous}.  The space $\SobU$
obtained in this way is the usual Sobolev space $W^{1,p}(\Om)$, and it
is naturally ordered by the a.e.\ 
ordering of functions.  Theorems~2.4.1 and~2.5.1 in
Ziemer~\cite{Ziemer}, together with the ellipticity of $A$, imply that
the subspace $W^{1,p}_0(\Om)$, which is the closure of
$C^\infty_0(\Om)$ in $W^{1,p}(\Om)$, is a regular Rellich--Kondrachov
cone (and thus a Poincar\'e set) also with respect to
the gradient relation $R_F$ (cf.\ Section~\ref{sect-unweighted-Rn}).
Hence, for every $f\in W^{1,p}(\Om)$,
Theorem~\ref{thm:dirichlet.problem} 
and Proposition~\ref{prop:u.gu.linear.unique.solution} provide us with a unique
solution of the Dirichlet problem
\[
\min \int_\Om |A(x)\grad u(x)|^p \,dx
\]
among all $u\in W^{1,p}(\Om)$ with $u-f\in W^{1,p}_0(\Om)$.
This minimizer is a weak solution of the elliptic equation
\[
\dvg( |A(x)\grad u|^{p-2} A(x)^T A(x)\grad u) =0.
\] 
In particular, if $A$ is the identity matrix, 
then this is the classical \p-Laplace equation
$\Delta_p u=0$.

Similarly, Theorem~\ref{thm-solve-rayleigh} makes it possible to
minimize the Rayleigh quotient 
\[
\min_{u\in W^{1,p}_0(\Om)} \frac{\int_\Om |A(x)\grad u|^p\,dx}{\|u\|^p_{L^p(\Om)}}.
\]

In this setting, there are other possible choices for $F$. 
For $V=D$ as above and $W=L^p(\Om)\times L^p(\Om;\R^n)$, equipped with 
the norm 
\[
\|(v,\xi)\|_W= \biggl( \int_\Om (|v|^p + |\xi|^p)\,dx \biggr)^{1/p},
\]
consider
\begin{equation}   \label{eq-F(u)-mix-u-gradu}
F(u)(x) = (\Lambda(u),\partial_1 u(x), \ldots,\partial_n u(x)) 
\in \R\times\R^n, 
\end{equation}
where $\Lambda:W^{1,p}(\Om) \to L^p(\Om)$ is any bounded linear operator.
The obtained space $\SobU$ is still $W^{1,p}(\Om)$ and its subspace
$W^{1,p}_0(\Om)$ is a regular Rellich--Kondrachov cone (and
thus a Poincar\'e set).
The Dirichlet problem then corresponds to the variational problem
\[
\min \int_\Om (|\Lambda(u)|^p + |\grad u|^p) \,dx,
\]
among all  $u\in W^{1,p}(\Om)$ with $u-f\in W^{1,p}_0(\Om)$.
Some natural choices are $\Lambda(u)=u$ and $\Lambda(u)=u\chi_E$, 
where $\chi_E$ is
the characteristic function of a measurable set $E\subset\Om$.
These choices lead to 
\[
\min \int_\Om (|u|^p + |\grad u|^p) \,dx
\quad \text{and} \quad
\min \biggl( \int_E |u|^p \,dx + \int_\Om |\grad u|^p \,dx \biggr).
\]
When $\Om=\R^n$, the obstacle problem, associated with the first functional and 
the obstacle $\psi=\chi_G$ for an open $G\subset\R^n$, is closely
related to the definition of the Sobolev capacity,  
see e.g.\ Section~2.35 in Heinonen--Kilpel\"ainen--Martio~\cite{HeKiMa}.

\subsection{Higher-order operators.}
\label{sec:higher.order.operators}

Let $V=D=W^{2,2}(\R^n)$ denote the subspace of $L^2(\R^n)$ 
  consisting of functions with first and
second order distributional derivatives in $L^2(\R^n)$.
Furthermore, let $W=L^2(\R^n)$ and $F(u)=\Delta u$, which clearly
is a continuous linear mapping from $V$ to $W$.

Theorem~4.4.1 in Ziemer~\cite{Ziemer} implies that for all 
$u\in W^{2,2}_0(\Om)$, which is the closure of $C^\infty_0(\Om)$ 
in $W^{2,2}(\R^n)$, the $W^{2,2}(\R^n)$-norm is 
equivalent to 
\[
\sum_{i,j=1}^n \int_\Om |\partial_{ij}u|^2\,dx.
\]
Since $\widehat{F(u)}=-|\xi|^2\uh$ and 
$\widehat{\partial_{ij}u}=-\xi_i \xi_j\uh$, where $\uh$ is the Fourier 
transform of $u$, Parseval's identity shows that
for all $u\in C^\infty_0(\Om)$, and hence for $u\in W^{2,2}_0(\Om)$,
\begin{align*}
\sum_{i,j=1}^n \int_\Om |\partial_{ij}u|^2\,dx 
     &= C \sum_{i,j=1}^n \int_{\R^n} |\uh|^2 |\xi_i \xi_j|^2\,d\xi \\
     &\le C' \int_{\R^n} |\uh|^2 |\xi|^4\,d\xi
     = C'' \|\Delta u\|^2_{L^2(\R^n)}.
\end{align*}
{}From this we conclude that $\|u\|_V\le C'''\|\Delta u\|_W$,
and it follows
that $W^{2,2}_0(\Om)$ is a Poincar\'e set with respect to the
``gradient'' relation 
\[
R=\{(u,\Delta u): u\in W^{2,2}(\R^n)\}.
\]
Theorem~\ref{thm:dirichlet.problem} 
and Proposition~\ref{prop:u.gu.linear.unique.solution}
now provide us, 
for every $f\in W^{2,2}(\R^n)$, with a unique solution of the Dirichlet problem
\[
\min \int_\Om |\Delta u|^2\,dx
\]
among all $u\in W^{2,2}(\R^n)$ with $u-f\in W^{2,2}_0(\Om)$.
In terms of partial differential equations, the above minimization problem 
is equivalent to the biharmonic equation $\Delta^2 u=0$ with the boundary 
data $u=f$ and $\grad u=\grad f$ on $\partial\Om$.

Of course, $F(u)=\Delta u$ is not the only choice. 
One may as well let $F(u)$ be the Hessian matrix of $u$.
Higher-order gradients and operators can also be considered.
It is also easy to mix derivatives of different orders in the same way 
as in~\eqref{eq-F(u)-mix-u-gradu}.
We will not dwell further upon these extensions here.

\subsection{Gradient spaces from lower semicontinuous 
sublinear  maps}
\label{sec:ugs.from.vector.valued}

\noindent Let $\Vt$ be a vector space and let $(\Wt,\leq)$ be a preordered
vector space. 
A map $F:D\to \Wt$ is  \emph{sublinear} if 
$D$ is a linear subspace and
\begin{enumerate}
\item $F(u+v)\leq F(u)+F(v)$,
\item $F(\alpha u)=\alpha F(u)$, 
\end{enumerate}
for all $u,v\in D$ and $\alpha>0$.

\begin{proposition}
  Let $F:D\to \Wt$ be a sublinear map. Then the set
  \begin{align*}
    R_F = \{(u,g) \in \Vt\times \Wt: F(u)\leq g\}
  \end{align*}
  is a gradient relation on $\Vt\times \Wt$.
\end{proposition}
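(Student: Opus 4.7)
The plan is to verify directly the two defining properties \ref{g:sum} and \ref{g:mult.scalar} of a gradient relation from Definition~\ref{def:upper.gradient.relation}. Note that if $(u,g)\in R_F$, then by the very definition of $R_F$ we must have $F(u)$ defined, hence $u\in D$; this will be used implicitly below, and is not a problem because $D$ is a linear subspace (by the definition of sublinearity given in this subsection).

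For \ref{g:sum}, I would take $(u,g),(u',g')\in R_F$, so $u,u'\in D$ and $F(u)\le g$, $F(u')\le g'$. Sublinearity of $F$ gives $F(u+u')\le F(u)+F(u')$, while two applications of property~\eqref{item-linear-preorder} of Definition~\ref{deff-preorder} (first adding $F(u)$ to $F(u')\le g'$ and then adding $g'$ to $F(u)\le g$) yield $F(u)+F(u')\le g+g'$. Transitivity then gives $F(u+u')\le g+g'$, i.e.\ $(u+u',g+g')\in R_F$.

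For \ref{g:mult.scalar}, given $(u,g)\in R_F$ and $\alpha>0$, I would use that $\alpha u\in D$ (since $D$ is linear) and the homogeneity part of sublinearity to write $F(\alpha u)=\alpha F(u)$. Combining $F(u)\le g$ with property~(4) of Definition~\ref{deff-preorder} yields $\alpha F(u)\le \alpha g$, so $F(\alpha u)\le \alpha g$ and thus $(\alpha u,\alpha g)\in R_F$.

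There is really no hard step here; the argument is a routine chain of the sublinearity axioms together with the compatibility of the preorder with sums and nonnegative scaling. The only mild subtlety is to remember that \ref{g:sum} requires \emph{two} applications of Definition~\ref{deff-preorder}\,\eqref{item-linear-preorder} rather than one, since the preorder is only assumed to be compatible with addition of a single element on each side.
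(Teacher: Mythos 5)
Your proof is correct and follows essentially the same route as the paper: verify \ref{g:sum} by combining sublinearity with the compatibility of the preorder with addition, and \ref{g:mult.scalar} by positive homogeneity together with compatibility with nonnegative scaling. Your extra care in noting that $u\in D$ is implicit in the definition of $R_F$ and that two applications of Definition~\ref{deff-preorder}\,\eqref{item-linear-preorder} are needed is a harmless refinement of the same argument.
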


\begin{proof}
  \ref{g:sum} Assume that $(u,g)\in R_F$ and $(u',g')\in
  R_F$, which implies that $F(u)\leq g$ and $F(u')\leq g'$. As $F$
  is sublinear, $F(u+u')\leq
  F(u)+F(u')\leq g+g'$, which implies that $(u+u',g+g')\in R_F$.

  \ref{g:mult.scalar} Assume that $(u,g)\in R_F$ and that
  $\alpha>0$. Since $f$ is a sublinear map it holds that
  $F(\alpha u)=\alpha F(u)\leq \alpha g$ which implies that $(\alpha
  u,\alpha g)\in R_F$.
\end{proof}

\noindent 
Let $ V\subseteq \Vt$ and $W\subseteq \Wt$ be reflexive Banach spaces
such that $W$ is strictly convex. Furthermore, we assume that $D$ is a
closed linear subspace of $V$, $F(D) \subset W$ and that $F:D \to W$ is
sublinear and lower semicontinuous.  This immediately implies that
properties \ref{gs:V.reflexive}--\ref{gs:minus.gradient} are
fulfilled.  Let us now show that property
  \ref{gs:upper.gradient.limit} holds. Assume that $D\ni
u_i\to u$ (in $V$) and $g_i\to g$ (in $W$) with $(u_i,g_i)\in R_F$ for
$i=1,2,\ldots$, which implies that $F(u_i)\leq g_i$ for
$i=1,2,\ldots$. Since $D$ is closed, it is clear that $u\in D$. Then,
as $F$ is lower semicontinuous, one obtains
\begin{align*}
  F(u)\leq\liminf_{i\to\infty} F(u_i) \leq 
  \liminf_{i\to\infty}g_i=\lim_{i\to\infty}g_i = g,
\end{align*}
which implies that $(u,g)\in R_F$. Hence, $(V,\Vt,W,\Wt,R_F)$ is a
gradient space. 

Note that the Newtonian spaces and their \p-weak upper gradients (as
well as the Haj\l asz gradients and the gradients given by Poincar\'e
type inequalities) can be seen as a special case of the above
construction.  We have also seen that in those cases there are plenty
of Poincar\'e sets and regular Rellich--Kondrachov cones.
Moreover, in the same spirit as in Section~\ref{sect-gen-var-probs},
the \p-weak upper gradients from
Section~\ref{sec:pweak.upper.gradient} can be combined with e.g.\ $u$
into new sublinear maps, such as the vector-valued map $u\mapsto
(u,g_u)$.

\subsection{Matrix algebras}
\label{sect-matrix-alg} 

So far, we have considered examples of gradient spaces based on
commutative algebras (of functions), as well as more general, abstract,
examples. Let us now illustrate that also noncommutative algebras fit
into the framework we have developed.

Let $\A$ be a vector space of (complex or real) matrices of dimension
$N$, and let $A^\dagger$ denote the hermitian transpose of the matrix
$A$. The Frobenius norm
\begin{equation}  \label{eq-Frobenius-norm}
  \normF{A} = \sqrt{\tr A^\dagger A}
\end{equation}
is strictly convex and, since $\A$ is finite-dimensional, the space
$(\A,\normF{\cdot})$ is a uniformly convex Banach space. 
As in Section~\ref{sec:ugs.from.vector.valued} 
(with $V=\Vt=W=\Wt=\A$), any linear map
$F:\A\to\A$ (which is automatically \emph{continuous}  since $\A$ is
finite-dimensional) induces a gradient relation $R_F$ on $\A$.
In particular, one may choose an inner derivation
\begin{align*}
  F(A) = [A,\delta]:= A\delta-\delta A
\end{align*}
for some (fixed) $\delta\in \A$. With such a choice, 
$\U=(\A,\A,\A,\A,R_F)$ becomes a
gradient space.

Moreover, via positive matrices one may introduce a linear ordering.
Namely, for matrices $A$ and $B$ one writes $A\geq B$ if the matrix
$A-B$ is positive definite or positive semidefinite.  With respect to
this ordering, $\U$ is an ordered gradient space.

For finite-dimensional matrices one may easily illustrate the fact that
$\max(\psi_1,\psi_2)$ is not necessarily comparable to every upper
bound of $\psi_1$ and $\psi_2$. By setting 
\begin{align*}
  \psi_1 = 
  \begin{pmatrix}
    1 & 0 \\ 0 & 0
  \end{pmatrix}\quad\text{and}\quad
  \psi_2 =
  \begin{pmatrix}
    0 & 0 \\ 0 & 1
  \end{pmatrix}
\end{align*}
it is straightforward, but somewhat tedious, to check that the $2\times 2$ identity matrix
$\mathbf{1}_2$ minimizes the Frobenius norm among hermitian matrices satisfying
$A-\psi_1\geq 0$ and $A-\psi_2\geq 0$. Setting
\begin{align*}
  A = 
  \begin{pmatrix}
    3 & \sqrt{5} \\ \sqrt{5} & 3
  \end{pmatrix}
\end{align*}
one readily checks that $A\geq\psi_1$, $A\geq\psi_2$ and that
$A-\mathbf{1}_2$ has the eigenvalues
\begin{align*}
  &\lambda_{+} = 2+\sqrt{5} > 0\\
  &\lambda_{-} = 2-\sqrt{5} < 0.
\end{align*}
Hence, $A$ is an upper bound for $\psi_1$ and $\psi_2$ which is not
comparable to $\mathbf{1}_2$. 

For self-adjoint algebras of bounded operators on a Hilbert space
(which, in some sense, are natural noncommutative examples of our
theory), the above situation is more or less generic, since if the
hermitian elements of such an algebra form a lattice, then the algebra
is commutative \cite{s:orderOperatorAlgebras}.

\subsection{Trace ideals}\label{sec:trace.ideals}

\noindent As concrete
infinite-dimensional noncommutative examples of our
framework, we consider trace ideals of compact operators (see e.g.\
Simon~\cite{s:traceIdeals} for a comprehensive treatment). Let $\B(H)$
denote the algebra of bounded linear operators on a separable Hilbert
space $H$; the scalar product on $H$ will be denoted by
$(\cdot,\cdot)$, and the induced norm by $\normH{\cdot}$. For
$A\in\B(H)$ one defines the adjoint operator $A^\ast$ via
$(Ax,y)=(x,A^\ast y)$, as well as the operator norm, given by
\begin{align*}
  \norm{A} = \inf_{\normH{x}=1}\normH{Ax}.
\end{align*}
A self-adjoint operator $A\in\B(H)$ (i.e.\ an operator satisfying
$A^\ast=A$) is called \emph{positive}, and one writes $A\geq 0$, if
$(Ax,x)\geq 0$ for all $x\in H$. For two operators $A,B\in\B(H)$ one
writes $A\geq B$ if $A-B\geq 0$.  The operator norm and the adjoint
operation makes $\B(H)$ into a $C^\ast$-algebra and, in particular, a
Banach space.  However, this is not the Banach space we want to use
for our purposes, since if $\B(H)$ is reflexive as a Banach space with
respect to the operator norm, then it is finite-dimensional (see e.g.\
Takesaki~\cite[p.~54]{t:oaI}). Instead, we shall consider trace ideals
together with their associated trace norms.  

It is a well-known fact that every nonempty proper ideal
$I\varsubsetneq \B(H)$ is contained in the ideal of compact operators
$\K(H)$ (\cite[Proposition~2.1]{s:traceIdeals}). Furthermore, every
compact operator $A\in\K(H)$ has a unique (up to ordering) sequence
$\{\mu_i(A)\}_{i\in I}$ of positive numbers (where $I$ is
either a finite or countable index set), called
\emph{singular values}, such that for $x\in H$,
\begin{align*}
  Ax = \sum_{i\in I}\mu_i(A)(e_i,x)\tilde{e}_i,
\end{align*}
where both $\{e_i\}_{i\in I}$ and $\{\tilde{e}_i\}_{i\in I}$ are
orthonormal sets \cite[Proposition~1.4]{s:traceIdeals}. In fact,
$\{\mu_i(A)\}_{i\in I}$ are the (strictly) positive eigenvalues of the self-adjoint
positive operator $|A|=\sqrt{A^\ast A}$.  Let $\{e_k\}_{k=1}^\infty$
be an orthonormal basis for $H$. The trace of an operator $A\geq 0$ is
defined as the sum
\begin{align*}
  \tr A = \sum_{k=1}^\infty(Ae_k,e_k)=\sum_{i\in I}\mu_i(A),
\end{align*}
which may or may not converge (in case it does, the sums are absolutely
convergent since every term is nonnegative).  For $1<p<\infty$, we let
$L^p(\B)$ denote the set of compact operators $A$ for which the sum
\begin{align*}
  \normp{A} := (\tr|A|^p)^{1/p} = \parac{\sum_{i\in I}\mu_i(A)^p}^{1/p}
\end{align*}
converges. (The case $p=2$ is an infinite-dimensional analogue of the
Frobenius norm~\eqref{eq-Frobenius-norm}.) Recall that $|A|^p$ is
defined via the functional calculus for operators (see
e.g.\ Rudin~\cite[Chapter~12]{r:functionalAnalysis}). The normed space
$(L^p(\B),\normp{\cdot})$ is a Banach space, and it is a particular
example of a \emph{symmetrically normed ideal}, which
satisfies
\begin{align}\label{eq:sni.properties}
  &\normp{BA}\leq \norm{B}\normp{A}\quad\text{and}\quad
  \norm{A}\leq\normp{A}
\end{align}
for $A\in L^p(\B)$ and $B\in\B(H)$ (\cite[Proposition~2.7]{s:traceIdeals}). 
Moreover,  
$L^p(\B)$ is known
to be uniformly convex (Dixmier~\cite[p.~30]{d:formesLineaires} and
McCarthy~\cite[Theorem~2.7]{m:cp}).  (Note that the above construction can be
generalized to traces on semifinite von Neumann algebras, see e.g.\
Takesaki~\cite[Chapter~IX]{t:oaII} for an introduction to
noncommutative integration theory.)  

Our aim is to illustrate that one can construct an ordered gradient
space with $\Vt=\Wt=\B(H)$ and $V=W=L^p(\B)$, with respect to the
standard ordering on operators as described above.  One may introduce
a gradient relation on $\B(H)\times\B(H)$ in many different ways,
e.g.\ as in Section~\ref{sec:ugs.from.continuous}, where one chooses a
bounded (and hence continuous) linear map $T:D\to\B(H)$ for a closed subset
$D\subseteq L^p(\B)$. Note that, in this context, a bounded operator
$T$ is such that there exists $C>0$ with
\begin{align*}
  \normp{T(A)}\leq C\normp{A}\quad\text{for all $A\in D$.}
\end{align*}
Consequently, by defining the gradient relation
\begin{align*}
  R_T = \{(A,T(A)):A\in D\},
\end{align*}
it follows that 
\begin{align}\label{eq:U.gradient.space.def}
  \U = (L^p(\B),\B(H),L^p(\B),\B(H),R_T)
\end{align}
is a gradient space. Furthermore, we shall prove that $\U$ is in fact an
ordered gradient space. The result is stated below in
Theorem~\ref{thm:BH.ordered.gradient.space}, but we postpone
the proof until the end of this section.

\begin{theorem}\label{thm:BH.ordered.gradient.space}
  Let $T:D\to L^p(\B)$ be a bounded linear map defined on a closed
  subset $D\subseteq L^p(\B)$ and let $1<p<\infty$. Then $\U
  = \paraa{L^p(\B),\B(H),L^p(\B),\B(H),R_T}$ is an ordered gradient
  space with respect to the standard partial ordering of positive
  operators.
\end{theorem}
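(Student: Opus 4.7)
My plan is to verify each of the axioms in turn. Several reduce to results already available: as $L^p(\B)$ is uniformly convex, it is reflexive and strictly convex, so properties \ref{gs:V.reflexive} and \ref{gs:W.reflexive.convex} come for free; since $T$ is assumed bounded (hence continuous) on the closed subspace $D\subseteq L^p(\B)$, the general construction in Section~\ref{sec:ugs.from.continuous} immediately yields that $\U$ is a gradient space, handling \ref{g:sum}, \ref{g:mult.scalar}, \ref{gs:minus.gradient} and \ref{gs:upper.gradient.limit}. The standard order on self-adjoint operators in $\B(H)$ is a linear preorder in the sense of Definition~\ref{deff-preorder}, so the remaining substantive content is to verify the compatibility conditions from Definitions~\ref{def:preordered.upper.gradient.space} and~\ref{def:ordered-grad-space}.

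For the preorder compatibility, I would argue as follows. Suppose $A_i\in L^p(\B)$ satisfies $A_i\le \psi$ for some $\psi\in\B(H)$, and $A_i\to A$ in $L^p(\B)$. The second inequality in \eqref{eq:sni.properties} gives $\|A_i-A\|\le \|A_i-A\|_p\to 0$, so $\psi-A_i\to\psi-A$ in operator norm. Since each $\psi-A_i\ge 0$ and positivity is preserved under strong (a fortiori, norm) limits, we get $\psi-A\ge 0$, i.e., $A\le\psi$. Thus $\U$ is a preordered gradient space.

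The main point is the norm-monotonicity condition \eqref{eq-ordered}: if $0\le A\le B$ with $A,B\in L^p(\B)$, then $\|A\|_p\le \|B\|_p$. Both operators are positive and compact, so their singular values coincide with their eigenvalues in decreasing order. I would invoke the min-max characterization
\[
  \mu_n(A)=\max_{\dim V=n}\ \min_{\substack{x\in V\\ \|x\|_H=1}}(Ax,x),
\]
valid for compact self-adjoint positive operators. Since $A\le B$ means $(Ax,x)\le(Bx,x)$ for every $x\in H$, the inner minimum for $A$ is at most the one for $B$ on every $n$-dimensional subspace, whence $\mu_n(A)\le \mu_n(B)$ for all $n$. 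Summing $p$-th powers yields
\[
  \|A\|_p^p=\sum_{i\in I}\mu_i(A)^p\le \sum_{i\in I}\mu_i(B)^p=\|B\|_p^p,
\]
which is exactly \eqref{eq-ordered}. Combined with the strict convexity of $L^p(\B)$, this completes the verification that $\U$ is an ordered gradient space.

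The step I expect to need the most care is the norm monotonicity: the purely algebraic inequality $A\le B$ on operators has to be converted into an inequality on the full sequence of eigenvalues, and the Weyl / min-max argument is the natural (and, essentially, only) route to this, whereas the other conditions reduce cleanly to the continuity of $T$, standard properties of $L^p(\B)$, and the norm-dominance $\|\cdot\|\le\|\cdot\|_p$ from \eqref{eq:sni.properties}.
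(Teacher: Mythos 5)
Your proof is correct, and the overall skeleton (gradient-space axioms via the construction of Section~\ref{sec:ugs.from.continuous}, reflexivity and strict convexity from uniform convexity of $L^p(\B)$, order-closedness via $\norm{\cdot}\le\normp{\cdot}$ from \eqref{eq:sni.properties}) matches the paper; the paper's order-closedness argument (Lemmas~\ref{lemma:Ti.p.limit} and~\ref{lemma:Lp.convergence.preserves.order}) passes through convergence of the quadratic forms $(A_ix,x)$ rather than operator-norm convergence, but that is the same estimate. Where you genuinely diverge is the key monotonicity step \eqref{eq-ordered}: the paper deduces $\normp{A}\le\normp{B}$ from McCarthy's trace inequality $\tr A^p+\tr B^p\le\tr(A+B)^p$ (Lemma~\ref{lemma:mccarthy}), applied to $B=A+(B-A)$, which in fact yields the stronger quantitative statement $\normp{B}^p\ge\normp{A}^p+\normp{B-A}^p$; you instead use the Courant--Fischer min-max principle to get Weyl-type eigenvalue domination $\mu_n(A)\le\mu_n(B)$ for all $n$ and then sum $p$-th powers. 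Your route is equally valid (both arguments rest on one standard cited fact), and it buys something different: singular-value domination gives monotonicity of \emph{any} symmetric norm, so your argument would extend verbatim to general symmetrically normed ideals, whereas the paper's argument is specific to the Schatten norms but gives the sharper superadditivity estimate. One small point of care: the theorem (and the paper) say ``closed subset $D$'', but for $R_T$ to be a gradient relation and for \ref{gs:minus.gradient} one needs $D$ to be a closed linear subspace, as in Section~\ref{sec:ugs.from.continuous}; you silently (and correctly) read it that way.
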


\noindent
Poincar\'e sets, with respect to the gradient relation
$R_T$, are given by subsets $\Om\subseteq D$ such that $T$ is bounded
from below on $\Om$, i.e.\ there exists $c>0$ such that
\begin{align*}
  \normp{T(A)}\geq c\normp{A} \quad \text{for all } A\in \Om.
\end{align*}
Let us illustrate the fact that there are many operators $T:L^p(\B)\to
L^p(\B)$  which are bounded from below,
by the following example. For any $M\in\B(H)$ we
let $\widetilde{T}_M$ denote the multiplication operator induced by $M$, i.e
\begin{align*}
  \widetilde{T}_M(A) = MA
\end{align*}
for $A\in L^p(\B)$. (Recall that, since $L^p(\B)$ is an ideal in
$\B(H)$, the product $MA$ lies in $L^p(\B)$.) From
\eqref{eq:sni.properties} it follows that that the operator
$\widetilde{T}_M$ is bounded
\begin{align*}
  \|\widetilde{T}_M(A)\|_p=\normp{MA}\leq\norm{M}\normp{A},
\end{align*}
and we let $T_M$ denote the rescaled operator
$T_M=\widetilde{T}_M/2\norm{M}$, giving
\begin{align*}
  \normp{T_M(A)}\leq\tfrac{1}{2}\normp{A}.
\end{align*}
Finally, we let $T=\mathbf{1}-T_M$ (where $\mathbf{1}$ denotes
the identity operator on $L^p(\B)$) and deduce that
\begin{align*}
  \normp{T(A)}=\normp{A-T_M(A)}\geq\normp{A}-\normp{T_M(A)}
  \geq \tfrac{1}{2}\normp{A}.
\end{align*}
Hence, any subset of $L^p(\B)$ is a Poincar\'e set with respect to the
gradient relation defined by $T$. Further examples are given by
Fredholm operators, for which one may find natural Poincar\'e sets.

\begin{definition}
  Let $X$ and $Y$ be Banach spaces and let $F:X\to Y$ be a bounded
  linear operator. $F$ is a \emph{Fredholm operator} if
  \begin{enumerate}
  \item $\im F$ is closed,
  \item $\ker F$ is finite-dimensional,
  \item $\coker F=Y/\im F$ is finite-dimensional.
  \end{enumerate}
\end{definition}

\noindent
Next, we show that for a Fredholm operator, the complement of the
kernel is a Poincar\'e set.

\begin{proposition} \label{prop-Fredholm-imp-PI-set} 
    Let $X$ and $Y$ be Banach spaces,
and let $F:X\to Y$ be a Fredholm
  operator. Then there exist a closed subspace $V\subseteq X$ 
  and a constant $C>0$ such that $X=V\oplus \ker F$ and
  \begin{align*}
    \norm{v}_X\leq C\norm{F(v)}_Y 
   \quad \text{for all } v\in V.
  \end{align*}
\end{proposition}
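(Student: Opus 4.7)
The plan is to prove this in two stages: first, produce the complementary closed subspace $V$, and second, apply the open mapping theorem to the restriction $F|_V$ to obtain the inequality.

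For the first stage, I would exploit the finite dimensionality of $\ker F$ to construct a continuous projection onto it. Specifically, pick a basis $e_1,\ldots,e_n$ of $\ker F$, and use the Hahn--Banach theorem to extend the coordinate functionals (defined on $\ker F$) to continuous linear functionals $\phi_1,\ldots,\phi_n$ on all of $X$ with $\phi_i(e_j)=\delta_{ij}$. Then $P(x)=\sum_{i=1}^n \phi_i(x)e_i$ is a bounded projection onto $\ker F$, and $V:=\ker P = \bigcap_{i=1}^n \ker \phi_i$ is a closed subspace satisfying $X = V \oplus \ker F$.

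For the second stage, consider the restriction $F|_V : V \to \im F$. It is linear, bounded (as the restriction of $F$), and by construction its kernel is trivial, so it is a bijection between $V$ and $\im F$. Because $V$ is closed in the Banach space $X$ it is itself a Banach space, and by hypothesis $\im F$ is closed in $Y$ and hence also a Banach space. The Banach isomorphism theorem (an immediate consequence of the open mapping theorem) then yields a continuous inverse $(F|_V)^{-1}: \im F \to V$, which supplies a constant $C>0$ with $\|v\|_X \le C\|F(v)\|_Y$ for all $v\in V$.

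There is no real obstacle here; the only thing one has to be careful about is that both halves of the argument genuinely use the Fredholm hypotheses: the finite dimensionality of $\ker F$ is what gives a complementing closed subspace $V$ (arbitrary closed subspaces of Banach spaces need not be complemented), while the closedness of $\im F$ is what lets the open mapping theorem apply to $F|_V$. The finite dimensionality of $\coker F$, interestingly, is not needed for this particular statement.
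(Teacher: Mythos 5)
Your proof is correct and follows essentially the same route as the paper: complement the finite-dimensional kernel by a closed subspace $V$, then apply the bounded inverse (open mapping) theorem to the bijection $F|_V:V\to\im F$ between Banach spaces. The only difference is cosmetic: you spell out the Hahn--Banach construction of the bounded projection onto $\ker F$, whereas the paper simply cites this standard complementation fact from Rudin.
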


\begin{proof}
  It is a standard fact that a finite-dimensional subspace of a normed
  space is complemented (see e.g.\ 
  Rudin~\cite[Lemma~4.21]{r:functionalAnalysis}), 
  i.e.\ there exists a closed subspace
  $V$ such that $X=V\oplus\ker F$. Hence, one may consider the
  operator $\Ftilde=F|_V:V\to\im F$ which is a bijective bounded
  operator between two Banach spaces (since $\im F$ is closed).  By
  the bounded inverse theorem \cite[Corollary~2.12\,(b)]{r:functionalAnalysis}, 
  there exists a bounded inverse
  $\Ftilde^{-1}:\im F\to V$. Thus, for every $u\in\im F$ there exists
  a constant $C>0$ such that
  \begin{align*}
    \|\Ftilde^{-1}(u)\|_X\leq C\norm{u}_Y. 
  \end{align*}
  In particular, one may choose $u=\Ftilde(v)$ (for arbitrary $v\in
  V$), which gives
  \begin{align*}
    \norm{v}_X\leq C\|\Ftilde(v)\|_Y=C\norm{F(v)}_Y
  \end{align*}
  and proves the second part of the statement.
\end{proof}

\noindent Finally, we will prove Theorem~\ref{thm:BH.ordered.gradient.space}, 
i.e.\ that $\U$ (as defined in
\eqref{eq:U.gradient.space.def}) is an ordered gradient space. The
results below are more or less standard, but we choose to repeat them
here for two reasons: firstly, statements in the literature are not
adapted to our particular setting and, secondly, we want to facilitate
for readers who are not so familiar with operator algebras. Let us
start by recalling the following lemma, which we state without proof.

\begin{lemma}[Lemma~2.6 in McCarthy~\cite{m:cp}]\label{lemma:mccarthy}
  For $0\le A,B\in \B(H)$ and $1\leq p<\infty$ it holds that
  \[ 
  \tr A^p + \tr B^p \leq \tr(A+B)^p.
  \] 
 In particular, if 
$A,B\in L^p(\B)$ and  $0  \le A  \le B$ then
\[
   \normp{B}^p - \normp{A}^p =    \tr B^p-\tr A^p\geq\tr(B-A)^p\geq 0,
\]
and thus $\normp{A} \le \normp{B}$.
\end{lemma}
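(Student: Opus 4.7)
The ``in particular'' assertion follows immediately from the trace inequality: given $0 \le A \le B$ in $L^p(\B)$, apply the first inequality to the positive operators $A$ and $B-A$ to obtain $\tr A^p + \tr(B-A)^p \le \tr B^p$, whence $\normp{B}^p - \normp{A}^p \ge \tr(B-A)^p \ge 0$ and $\normp{A}\le\normp{B}$. So the real content is the first inequality.

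For the trace inequality $\tr A^p + \tr B^p \le \tr(A+B)^p$, the starting point is the elementary scalar inequality $a^p + b^p \le (a+b)^p$ for $a,b \ge 0$ and $p \ge 1$, which follows from
\[
(a+b)^p - a^p = p\int_a^{a+b} x^{p-1}\,dx \ge p\int_0^b x^{p-1}\,dx = b^p.
\]
To lift this to positive compact operators, my plan is to first reduce to the finite-dimensional setting: for any sequence $P_n$ of finite-rank orthogonal projections with $P_n\to I$ strongly, the compressions $P_nAP_n$ and $P_nBP_n$ are finite-rank and positive and converge to $A$ and $B$ in $\normp{\cdot}$, and the continuity of $X\mapsto\tr X^p$ on the positive cone of $L^p(\B)$ then lets us pass to the limit in the desired inequality.

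In finite dimensions, set $\phi(t):=\tr(A+tB)^p$ for $t\in[0,1]$. A standard computation via the holomorphic functional calculus (writing $(A+tB)^p$ as a Dunford--Riesz contour integral, differentiating under the integral, and then integrating by parts in the contour variable) yields $\phi'(t) = p\,\tr[(A+tB)^{p-1}B]$, so the required $\phi(1)-\phi(0)\ge\tr B^p$ follows, by integration, from the pointwise bound $\tr[(A+tB)^{p-1}B] \ge t^{p-1}\tr B^p$ on $[0,1]$. For $p\in[1,2]$ this bound is a direct consequence of the L\"owner--Heinz theorem: operator monotonicity of $x\mapsto x^{p-1}$ on $[0,\infty)$, combined with $A+tB\ge tB\ge 0$, gives $(A+tB)^{p-1}\ge t^{p-1}B^{p-1}$ as operators, and the trace of this positive difference against $B\ge 0$ is nonnegative (since $\tr[XY]=\tr[X^{1/2}YX^{1/2}]\ge 0$ whenever $X,Y\ge 0$). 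The main obstacle is the range $p>2$, where $x^{p-1}$ is no longer operator monotone and this route breaks down; one then needs more delicate tools---for example a direct combinatorial expansion of $(A+B)^n$ at integer exponents (using that $\lambda_i(AB)\ge 0$ for positive $A,B$, so each cyclic class of mixed trace terms contributes nonnegatively) combined with a complex-analytic interpolation in $p$ via a Phragm\'en--Lindel\"of-type argument applied to the analytic function $z\mapsto\tr(A+B)^z-\tr A^z-\tr B^z$, which is the route of McCarthy's original proof in \cite{m:cp}.
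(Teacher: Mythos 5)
The first thing to note is that the paper does not prove the trace inequality at all: Lemma~\ref{lemma:mccarthy} is stated explicitly ``without proof'', the inequality $\tr A^p+\tr B^p\le\tr(A+B)^p$ being quoted from McCarthy~\cite{m:cp}. The only argument that belongs to the paper is the ``in particular'' deduction, and you carry that out exactly as intended: apply the quoted inequality to the positive operators $A$ and $B-A$. Your extra attempt to actually prove the quoted inequality is sound for $1\le p\le 2$: the reduction to finite rank, the differentiation of $\phi(t)=\tr(A+tB)^p$, and the L\"owner--Heinz step are all fine, up to the small point that for noninteger $p$ the function $x\mapsto x^p$ is not holomorphic in any neighbourhood of $0$, so the Dunford--Riesz computation should be done for $A+\eps\mathbf{1}$ and followed by $\eps\to0$.

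The genuine gap is the range $p>2$, which your proposal does not prove: the fallback you sketch is not a proof, and its key claim is unjustified. Nonnegativity of the eigenvalues of $AB$ only controls traces of words that are cyclic rotations of powers of $AB$ (or of $A$, $B$ alone); the trace of a general mixed word in two positive operators, such as $\tr(A^2B^2AB)$, need not be nonnegative --- it need not even be real, since its conjugate is the trace of the reversed word --- so ``each cyclic class of mixed trace terms contributes nonnegatively'' cannot be asserted without substantial further argument. If you want a self-contained proof rather than the citation the paper uses, your derivative route does in fact extend to all $p\ge1$ without operator monotonicity: in the finite-dimensional reduction write $Bx=\sum_i\beta_i(v_i,x)v_i$ in an orthonormal eigenbasis of $B$, and for $p\ge2$ apply Jensen's inequality to the spectral measure of $X=A+tB$ at $v_i$ to get $(X^{p-1}v_i,v_i)\ge(Xv_i,v_i)^{p-1}\ge(t\beta_i)^{p-1}$; summing gives $\tr(X^{p-1}B)=\sum_i\beta_i(X^{p-1}v_i,v_i)\ge t^{p-1}\tr B^p$, which is exactly the bound you need to integrate over $t\in[0,1]$. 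With that replacement (and L\"owner--Heinz for $1\le p\le2$) your argument becomes a complete proof; otherwise, simply citing \cite{m:cp} as the paper does is the intended treatment.
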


\begin{lemma}\label{lemma:Ti.p.limit}
  If $\{A_i\}_{i=1}^\infty$ is a sequence of
  operators in $L^p(\B)$, then
  \begin{align*}
    \lim_{i\to\infty}\normp{A_i} = 0\implies
    \lim_{i\to\infty}(A_ix,x) = 0\text{ for all } x\in H.
  \end{align*}
\end{lemma}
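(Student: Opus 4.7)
The plan is to use the inequality $\norm{A}\leq\normp{A}$ from \eqref{eq:sni.properties}, which says that the operator norm on $L^p(\B)$ is dominated by the $p$-norm. This instantly reduces convergence in $\normp{\cdot}$ to convergence in the operator norm, and convergence in the operator norm clearly implies weak convergence, hence in particular convergence of the diagonal matrix coefficients $(A_i x, x)$.

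More concretely, I would first fix an arbitrary $x \in H$ and apply the Cauchy--Schwarz inequality in $H$ to obtain
\[
   |(A_i x, x)| \leq \normH{A_i x} \cdot \normH{x} \leq \norm{A_i}\,\normH{x}^2,
\]
where $\norm{A_i}$ denotes the operator norm of $A_i$ on $H$. Then I would invoke the second inequality in \eqref{eq:sni.properties}, namely $\norm{A_i} \leq \normp{A_i}$, to conclude
\[
   |(A_i x, x)| \leq \normp{A_i}\,\normH{x}^2 \to 0
   \quad \text{as } i\to\infty,
\]
since $\normp{A_i}\to 0$ by assumption. This gives the desired conclusion for every $x \in H$.

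There is no real obstacle here; the work was already done when recording the inequalities \eqref{eq:sni.properties}, which embed $L^p(\B)$ continuously into $\B(H)$. The lemma is just noting that $\normp{\cdot}$-convergence is at least as strong as operator-norm convergence, which in turn is stronger than weak operator convergence.
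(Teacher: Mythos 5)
Your argument is correct and is essentially identical to the paper's own proof: both combine the Cauchy--Schwarz inequality $|(A_i x,x)|\leq \norm{A_i}\normH{x}^2$ with the bound $\norm{A_i}\leq\normp{A_i}$ from \eqref{eq:sni.properties}. Nothing further is needed.
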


\begin{proof}
  Since $\norm{A_i}\leq \normp{A_i}$, the Cauchy--Schwarz inequality
  yields
  \begin{align*}
    |(A_ix,x)|\leq \normH{A_ix}\normH{x}\leq\norm{A_i}\normH{x}^2
   \le \normp{A_i}\normH{x}^2,
  \end{align*}
  from which it follows that $|(A_ix,x)|\to 0$ as $i\to\infty$.
\end{proof}

\begin{lemma}\label{lemma:Lp.convergence.preserves.order}
  Let $\{A_i\}_{i=1}^\infty$ be a sequence in $L^p(\B)$ such that
  $A_i\geq B$ for some $B\in\B(H)$. If there exists $A\in L^p(\B)$
  such that $\lim_{i\to\infty}\normp{A-A_i}=0$ then $A\geq B$.
\end{lemma}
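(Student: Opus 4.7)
The plan is to verify the two defining conditions of the ordering directly: that $A-B$ is self-adjoint, and that $((A-B)x,x)\ge 0$ for every $x\in H$.

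For self-adjointness, I would first exploit the inequality $\norm{\,\cdot\,}\le\normp{\,\cdot\,}$ from \eqref{eq:sni.properties}, which shows that $\normp{A-A_i}\to 0$ forces $A_i\to A$ in the operator norm. Consequently $A_i-B\to A-B$ in operator norm as well. Since the adjoint is an isometric involution on $\B(H)$ with respect to the operator norm, the subspace of self-adjoint elements is closed in operator norm; and each $A_i-B$ is self-adjoint by the hypothesis $A_i\ge B$. Hence $A-B$ is self-adjoint.

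For positivity of the quadratic form, fix $x\in H$. The hypothesis $A_i\ge B$ gives $((A_i-B)x,x)\ge 0$ for every $i$, so
\[
((A-B)x,x) = ((A-A_i)x,x) + ((A_i-B)x,x) \ge ((A-A_i)x,x).
\]
Since $\normp{A-A_i}\to 0$, Lemma~\ref{lemma:Ti.p.limit} applied to the sequence $\{A-A_i\}_{i=1}^\infty$ yields $((A-A_i)x,x)\to 0$ as $i\to\infty$. The left-hand side is independent of $i$, so letting $i\to\infty$ gives $((A-B)x,x)\ge 0$. Combined with the self-adjointness of $A-B$ this means $A-B\ge 0$, i.e.\ $A\ge B$.

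There is no real obstacle: the whole argument is a direct application of Lemma~\ref{lemma:Ti.p.limit} once one notices that $L^p$-convergence dominates operator-norm convergence. The only point that requires a moment's care is the self-adjointness of $A-B$, which must be obtained as a limit of self-adjoint operators; here the bound $\norm{\,\cdot\,}\le\normp{\,\cdot\,}$ makes this automatic.
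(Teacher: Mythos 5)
Your argument is correct and takes essentially the same route as the paper: the decomposition $((A-B)x,x)=((A-A_i)x,x)+((A_i-B)x,x)$ combined with Lemma~\ref{lemma:Ti.p.limit} is exactly the paper's proof. Your additional check that $A-B$ is self-adjoint (via $\norm{\,\cdot\,}\le\normp{\,\cdot\,}$ and operator-norm closedness of the self-adjoint operators) is a harmless extra detail that the paper leaves implicit.
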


\begin{proof}
  Assume that $\normp{A-A_i}\to 0$ with $A_i\geq B$ for
  $i=1,2,\ldots$, which implies that
  \begin{align*}
    \lim_{i\to\infty}((A-A_i)x,x)= 0,
  \end{align*}
  by Lemma~\ref{lemma:Ti.p.limit}. Next, one may write
  \begin{align*}
    ((A-B)x,x) & 
    =((A-A_i)x,x) + ((A_i-B)x,x)
    \geq ((A-A_i)x,x),
  \end{align*}
  since $A_i\geq B$.  As $\lim_{i\to\infty}((A-A_i)x,x)=0$ it
  follows that $((A-B)x,x)\geq 0$ for all $x\in H$, which is
  equivalent to $A\geq B$.
\end{proof}

\begin{proof}[Proof of Theorem~\ref{thm:BH.ordered.gradient.space}]
This now follows directly from Lemmas~\ref{lemma:mccarthy}
and~\ref{lemma:Lp.convergence.preserves.order}.
\end{proof}

\subsection{Upper gradients of operator-valued functions}

In close analogy with the theory of \p-weak upper gradients in 
Section~\ref{sec:pweak.upper.gradient} one may introduce upper gradients of
operator-valued (or, more generally, Banach space valued) functions,
see Heinonen--Koskela--Shanmugalingam--Tyson~\cite{hkst:banachSpaceValued},
\cite{HKST}. In particular, we shall consider
functions from a metric measure space $(X,d,\mu)$ into the space
$L^p(\B)$, as introduced in Section~\ref{sec:trace.ideals}.

Thus, let $(X,d,\mu)$ be a metric measure space and let $S^{r,p}$
denote the set of ($\mu$-a.e.\ equivalence classes of) functions
$f:X\to L^p(\B)$ such that
\begin{align*}
  \normr{f} = \parac{\int_X\normp{f}^rd\mu}^{1/r} < \infty.
\end{align*}
Since $L^p(\B)$ is uniformly convex,
it is for certain values of $r$ and $p$ known that $S^{r,p}$ is uniformly
convex  (due to the fact that one
can prove a Clarkson inequality in $S^{r,p}$); for instance, one may
choose $1<p\leq 2$ and $r=p$ 
(see Takahashi--Kato~\cite{kt:ClarksonInequalities} for details).

For $q>1$, following \cite{hkst:banachSpaceValued}, we say that a
nonnegative measurable function $g:X\to\reals$ is a $q$-weak upper
gradient of $f\in S^{r,p}$ if
\begin{align}\label{eq:def.banach.upper.gradient}
  \normr{f(\gamma(a))-f(\gamma(b))}
  \leq \int_\gamma g \,ds
\end{align}
for $q$-almost every rectifiable curve $\gamma:[a,b]\to X$. We define
a gradient relation $R\subseteq S^{r,p}\times \Wt$, where $\Wt$
denotes the set of ($L^q(X)$-equivalence classes of) functions
$X\to\reals$, as follows
\begin{align*}
  R = \{(u,g):u\in S^{r,p},\,g\in\Wt\textrm{ and $g$ is a $q$-weak
    upper gradient of $u$}\}.
\end{align*}
(Or more precisely, there is a representative of $g$ which is a
$q$-weak upper gradient of some representative of $u$.)  As for the
weak upper gradients in Section~\ref{sec:pweak.upper.gradient}, it is
immediate to check that properties
\ref{g:sum} and \ref{g:mult.scalar}
are fulfilled, and so $R$ is indeed a gradient relation. Consequently,
we consider the gradient space
\begin{align*}
  \U = \paraa{S^{r,p},\Vt,L^q(X),\Wt,R},
\end{align*}
where $\Vt$ denotes the space of ($\mu$-a.e.\ equivalence classes) of functions
$X\to L^p(\B)$. Properties~\ref{gs:V.reflexive} and
\ref{gs:W.reflexive.convex}, of a gradient space, are fulfilled since
both $S^{r,p}$ and $L^q(X)$ are uniformly convex. Property
\ref{gs:minus.gradient} is satisfied as 
it follows immediately from
\eqref{eq:def.banach.upper.gradient} that if $g\in L^q(X)$ is a
$q$-weak upper gradient of $u$, then $g$ is also a $q$-weak upper
gradient of $-u$. The fact that \ref{gs:upper.gradient.limit} holds
follows (mutatis mutandis) from the corresponding result for upper
gradients considered in Section~\ref{sec:pweak.upper.gradient}
(cf.\ Bj\"orn--Bj\"orn~\cite[Proposition~2.3]{BBbook}). Hence, we conclude that $\U$ is
a gradient space.

The existence of Poincar\'e sets is analogous to the case of
real-valued functions in Section~\ref{sec:pweak.upper.gradient}.
Namely, 
Heinonen--Koskela--Shanmugalingam--Tyson~\cite[Theorem~4.3]{hkst:banachSpaceValued} 
shows that whether
a Poincar\'e inequality is supported or not, does not depend on
the Banach space in which the functions take their
values.  Therefore, if a metric measure space $X$ supports a
Poincar\'e inequality for real-valued functions, it supports a
Poincar\'e inequality for functions with values in an arbitrary Banach
space.

\end{document}